\newtheorem{theorem}{Theorem}[section]
\newtheorem{definition}{Definition}[section]
\newtheorem{proposition}{Proposition}[section]
\newtheorem{lemma}{Lemma}[section]
\newtheorem{corollary}{Corollary}[section]
\newtheorem{remark}{Remark}[section]
\numberwithin{equation}{section}
\let\originalleft\left
\let\originalright\right
\renewcommand{\left}{\mathopen{}\mathclose\bgroup\originalleft}
\renewcommand{\right}{\aftergroup\egroup\originalright}
\newcommand{\K}{\mathbb{K}}
\newcommand{\R}{\mathbb{R}}
\newcommand{\E}{\mathbb{E}}
\newcommand{\D}{D}
\renewcommand{\H}{\mathbb{H}}
\newcommand{\Oo}{\mathbb{O}}
\renewcommand{\P}{\mathbb{P}}
\newcommand{\FF}{\mathbb{F}}
\newcommand{\EP}{ {\mathbb E}_{\mathbb{P}}}
\newcommand{\Aset}{\mathcal{N}}
\newcommand{\Filt}{\mathcal{F}}
\newcommand{\Mset}{\mathcal{M}}
\newcommand{\Eset}{\mathcal{E}}
\newcommand{\Strat}{\mathcal{S}}
\renewcommand{\Game}{\mathcal{G}}
\newcommand{\Pmat}{\mathcal{P}}
\newcommand{\Zmat}{\mathcal{Z}}
\newcommand{\Kmat}{\mathcal{K}}
\newcommand{\Kmot}{\mathcal{K}^\dagger}
\newcommand{\Pmot}{\mathcal{P}^\dagger}
\newcommand{\esssup}{\operatornamewithlimits{ess\,sup}}
\newcommand{\essinf}{\operatornamewithlimits{ess\,inf}}
\let\inf\relax \DeclareMathOperator*\inf{\vphantom{p}inf}
\let\essinf\relax \DeclareMathOperator*\essinf{\vphantom{p}ess\,inf}
\newcommand{\map}[2]{\,{:}\,#1\rightarrow#2}
\newcommand{\norm}[1]{\left\|#1\right\|}
\newcommand{\proj}[2]{\pi_{#2}\left(#1\right)}
\newcommand\I{\mathds{1}}
\newcommand{\si}{\sigma}
\newcommand{\Gt}{\widetilde G}
\newcommand{\VI}{\operatorname{VI}}
\newcommand{\LCP}{\operatorname{LCP}}
\newcommand{\ZRG}{\operatorname{ZRG}}
\newcommand{\GRG}{\operatorname{GRG}}
\newcommand{\AG}{\operatorname{AG}}
\newcommand{\SAG}{\operatorname{AG}}
\newcommand{\ASG}{\operatorname{SAG}}
\newcommand{\SOL}{\operatorname{SOL}}
\newcommand{\wt}{\widetilde}
\newcommand{\wh}{\widehat}
\newcommand{\whts}{\widehat \tau^*_t}
\newcommand{\whD}{\widehat{D}}
\renewcommand{\bm}{}
\newcommand{\BSDE}{\operatorname{BSDE}}
\newcommand{\Keywords}[1]{\par\noindent{\small{\bf Keywords\/}: #1}}
\newcommand{\Class}[1]{\par\noindent{\small{\bf Mathematics Subjects Classification (2010)\/}: #1}}
\title{{\Large \bf STOCHASTIC MULTI-PLAYER COMPETITIVE \\ GAMES IN DISCRETE TIME}\vskip 90 pt}
\author{Ivan Guo and Marek Rutkowski\footnote{The research of Ivan Guo and Marek Rutkowski was supported under Australian Research Council's Discovery Projects funding scheme (DP120100895).}
\\ School of Mathematics and Statistics
\\ University of Sydney
\\ NSW 2006, Australia}
\date{\vskip 50 pt 6 April 2014 \vskip 15 pt}
\begin{document}
\maketitle
\vskip 45 pt
\begin{abstract}
A new class of  multi-player competitive stochastic games in discrete-time with an affine specification of
the redistribution of payoffs at exercise is proposed and examined. Our games cover as a very special
case the classic two-person stochastic stopping games introduced by Dynkin \cite{Dynkin}.
We first extend to the case of a single-period deterministic affine game the results from Guo and
Rutkowski \cite{Guo1,Guo2} where a particular subclass of competitive stopping games was studied.
We identify conditions under which optimal equilibria and value for a multi-player competitive game
with affine redistribution of payoffs exist. We also examine stochastic multi-period affine games and we show
that, under mild assumptions, they can be solved by the backward induction.
\vskip 20 pt
\Keywords{multi-player game, redistribution game, affine game, Dynkin stopping game, stochastic game, Nash equilibrium, optimal equilibrium}
\vskip 20 pt
\Class{91A06,$\,$91A15,$\,$60G40}
\end{abstract}

%\vskip 10 pt \hskip 10 pt {\small {\bf AMS 2000 subject classification.} 60H99, 91H99.}

\newpage

%%%%%%%%%%%%%%%%%%%%%%%%%%%%%%%%%%%%%%%%%%%%%%%%%%%%%%%%%%%%%%%%%%%%%%%%%%%%%
%%%%%%%%%%%%%%%%%%%%%%%%%%%%%%%%%%%%%%%%%%%%%%%%%%%%%%%%%%%%%%%%%%%%%%%%%%%%%
%\section{Introduction}
%%%%%%%%%%%%%%%%%%%%%%%%%%%%%%%%%%%%%%%%%%%%%%%%%%%%%%%%%%%%%%%%%%%%%%%%%%%%%
%%%%%%%%%%%%%%%%%%%%%%%%%%%%%%%%%%%%%%%%%%%%%%%%%%%%%%%%%%%%%%%%%%%%%%%%%%%%%
%
%
%%%%%%%%%%%%%%%%%%%%%%%%%%%%%%%%%%%%%%%%%%%%%%%%%%%%%%%%%%%%%%%%%%%%%%%%%%%%%%
%
%
%
%%%%%%%%%%%%%%%%%%%%%%%%%%%%%%%%%%%%%%%%%%%%%%%%%%%%%%%%%%%%%%%%%%%%%%%%%%%%%%
%%%%%%%%%%%%%%%%%%%%%%%%%%%%%%%%%%%%%%%%%%%%%%%%%%%%%%%%%%%%%%%%%%%%%%%%%%%%%%
%
%The goal of this section is to study various extensions of the single-period  deterministic game. We begin by presenting a straight forward single-period  stochastic version. For multi-period s, instead of only choosing between ``exercise'' and ``not exercise'', each player also have to decide ``when to exercise''. These games are rarely WUC (cf. Definition \ref{defwuc}), even if the single-period  building blocks are. Nevertheless, we attempt to identify generalizations where optimal equilibria and value still exist.
%
%\cite{guo2012zero}
%\cite{hamadene2012multi}
%\cite{solan2001quitting}
%\cite{solan2003deterministic}
%\cite{solan2005stopping}
%
%\newpage

%%%%%%%%%%%%%%%%%%%%%%%%%%%%%%%%%%%%%%%%%%%%%%%%%%%%%%%%%%%%%%%%%%%%%%%%%%%%
%%%%%%%%%%%%%%%%%%%%%%%%%%%%%%%%%%%%%%%%%%%%%%%%%%%%%%%%%%%%%%%%%%%%%%%%%%%%
\section{Introduction}
%%%%%%%%%%%%%%%%%%%%%%%%%%%%%%%%%%%%%%%%%%%%%%%%%%%%%%%%%%%%%%%%%%%%%%%%%%%%
%%%%%%%%%%%%%%%%%%%%%%%%%%%%%%%%%%%%%%%%%%%%%%%%%%%%%%%%%%%%%%%%%%%%%%%%%%%%

The classic \emph{Dynkin game}, introduced in the path-breaking paper \cite{Dynkin}, is a zero-sum, optimal stopping game between two players where each player can stop the game for a payoff observable at that time. They were the object of several papers published during the last forty years; see,  for instance, \cite{Cvitanic,Hamadene2,Laraki,Laraki2,Peskir2,Rosenberg,Touzi}. The classic zero-sum two-player Dynkin games were also used to construct and analyze the financial contracts dubbed the {\it game options} (also known as {\it Israeli options}). This notion was formally defined by Kifer \cite{Kifer1}, who proved the existence and uniqueness for the arbitrage price of a game option in some benchmark financial models.
For an exhaustive overview of results on two-player Dynkin games and Israeli options, the interested reader is referred to the recent survey by Kifer \cite{Kifer2}. Several alternative formulations of extended Dynkin games with more than two players can be found in the existing literature (see \cite{HH-2011,HH-2012,Karatzas1,Solan,Solan1}). For instance, Solan and Vieille \cite{Solan} introduced a multi-player quitting game, which terminates as soon as any player chooses to quit; then each player receives a payoff depending on the set of players quitting the game. Under certain payoff conditions, a subgame perfect uniform $\epsilon$-equilibrium using cyclic strategies can be found. Another version of a multi-player Dynkin was examined in Solan and Vieille \cite{Solan1} where players are given the opportunity to stop the game in a turn-based fashion. A subgame perfect $\epsilon$-equilibrium was once again shown to exist and consist of pure strategies when the game is non-degenerate.

More recently, Guo and Rutkowski \cite{Guo1,Guo2} introduced a class of $m$-player stopping games (dubbed the {\it redistribution games}) with a focus on designing the explicit dependencies between the payoffs of all players and their stopping decisions. The goal was to model a multilateral contract where all players are competing for a predetermined (albeit possibly random) total wealth. Each player can either exit (and thus also terminate) the contract for a predetermined benefit, or do nothing and receive an adjusted benefit, which reflects the discrepancies caused by any exiting decisions of other players. These adjustments were judiciously designed to ensure that the total wealth redistributed was fixed.

In the present work, we first generalize results from \cite{Guo1,Guo2} to the case of a single-period deterministic {\it affine game}. It should be stressed that in a multi-period framework, stochastic multi-player competitive games with an affine redistribution of payoffs are defined in a recursive way. Specifically, the payoffs redistribution at the moment when a game is first `stopped' or `exercised' at time $t$ by one of the players is based in the first place on the running payoffs for all players, but they also take into account the values for exercising players of an equivalent game, which is not exercised at time $t$, but instead it continues at least till time $t+1$. The idea of the recursive specification of the multi-player game is reminiscent of the concepts of \emph{exercise payoff} and \emph{continuation value} for some financial derivatives, such as American or game options (see \cite{Karatzas,Kifer1,Kifer2}), which indeed was the original motivation for our research. It is worth stressing that the interpretation of a player's decision to `exercise' depends on the context and, in practice, does not necessarily mean that a
 game is actually stopped.

 For instance, in our applications of results from this paper to multi-person financial contracts with a fixed maturity date, by `exercising' we mean a decision to `put' a traded tranche of a contract to the issuer. According to this interpretation, all tranches of a contract will be traded till the contract's maturity date, also in the case when some agents will decide to `exercise' the contract before its maturity. Hence we deal here with multi-player multi-exercise games, rather than typical stopping games in which exercise payoffs do not depend on an optimal behavior of players in the future. For applications of results from this work to arbitrage pricing of multi-person financial contracts, the interested reader is referred to~\cite{Guo3}.

 Let us only stress here that a financial derivative with meaningful decisions of counterparties should not be confused with a `game' in the usual sense of this term. Indeed, the standard rationale that underpins the game theory is that players should search for their respective optimal strategies, or at least for an equilibrium strategy profile for all players.
By contrast, the valuation problem for financial derivatives in an arbitrage-free market model hinges on the concept of replication (or super-hedging) of a contract's cash flows through the wealth process of a self-financing trading strategy. Therefore, an essential step in linking the \emph{arbitrage valuation} problem for financial derivatives to the \emph{equilibrium paradigm} prevailing in game theory relies on showing that the arbitrage prices, if well-defined, may be also interpreted as the values of a virtual game played by the counterparties under the martingale measure, which stems from the so-called \emph{Fundamental Theorem of Asset Pricing}. As recently shown in \cite{Guo3}, a necessary condition for a general multi-person game to be consistent with arbitrage pricing theory is that the game admits an \emph{optimal equilibrium} (as opposed to a weaker notion of a Nash equilibrium) under the unique martingale measure in a complete and arbitrage-free market model. For this reason, we aim to identify classes of multi-player competitive games for which an optimal equilibrium exists. It was proven in \cite{Guo3} (see part (ii) in Theorem 4.2 and Proposition 4.3 therein) that the games considered in the present work are suitable models of multi-person game options with unique arbitrage prices.

Furthermore, in financial applications, which indeed motivated our study of affine games, there is no much sense to deal with {\it mixed}  (i.e., randomised) strategies, since randomisation of strategies would undermine the concepts of an arbitrage opportunity (i.e., a risk-free profit) and super-hedging (i.e., a complete elimination of risk). Therefore, only {\it pure} strategies are considered in what follows. To be more specific, although a strategy can be random, it will always be adapted to a given reference filtration describing the information flow, so that no additional randomisation of a strategy is allowed. For more details on methods of mathematical finance, the interested reader is referred to, e.g., \cite{MMM,KS,MR} and for a detailed study of connections between multi-player stochastic games and multi-person game contingent claims, we refer to \cite{Guo3}.

In the present work, we first identify conditions under which a deterministic single-period multi-person affine game possesses an optimal equilibrium, so that the value for the game exists. As explained above, the existence of an optimal equilibrium is a crucial property from the viewpoint of arbitrage pricing theory. Next, we show that all single-period results can be immediately applied to a stochastic extension where both terminal and exercise payoffs are random, as long as expectations are incorporated into the definitions of a solution and an optimal equilibrium. Finally, we examine the stochastic multi-period affine games and we show that they can be solved through the backward induction under mild assumptions.
These recursive games can be readily applied to multi-person financial game options, where the properties of an optimal equilibrium become imperative in the pricing arguments (see, in particular, Theorem 4.2 in \cite{Guo3}).
Apart from multi-person game options, the multi-player competitive games examined in this work may find applications in other economic and financial contexts, for example, as a consumption model with bounded resources (see, e.g., Ramasubramanian \cite{Ramasubramanian1,Ramasubramanian2}). Continuous-time versions of affine games studied here are also of interest and they will be studied elsewhere; see, e.g., Nie and Rutkowski~\cite{Nie} for continuous-time redistribution games and
the associated backward stochastic differential equations (BSDEs).

This paper is organized as follows.

In Section \ref{sec2}, we first construct the single-period $m$-player \emph{affine game} associated with a non-singular matrix $G$. The role of $G$ is to specify the redistribution of payoffs among non-exercising players. We note that single-period affine games are also closely related to a well-known class of optimisation problems, known as \emph{linear complementarity problems} (LCPs). In fact, we show that finding Nash equilibria in an affine game is equivalent to finding solutions to an associated LCP. Using techniques from the theory of LCPs, we are able to identify sufficient conditions on the matrix $G$ that ensure the existence of Nash equilibria, optimal equilibria, individual values and coalition values.
The main result in Section \ref{sec2} is Theorem \ref{thmjj30}, which shows that if $G$ is a $\Pmat$-matrix (i.e., a matrix with all principal minors positive), then the affine game has at least one Nash equilibrium and all Nash equilibria attain the same payoff. Furthermore, if $G$ is a $\Kmat$-matrix (i.e., a $\Pmat$-matrix with non-positive off-diagonal terms), then the single-period affine game is \emph{weakly unilaterally competitive} (WUC) in the sense of Kats and Thisse \cite{Kats}, and the Nash equilibrium payoff is also the unique value of the game.

In Section \ref{sec3}, we examine the class of affine games with singular matrices.  Theorem \ref{thmjr10} shows that if $G$ is a $\Pmot$-matrix (i.e., a matrix with a non-negative determinant and positive proper principal minors), then  the affine game has a Nash equilibrium and a unique Nash equilibrium payoff. Moreover, when $G$ is a $\Kmot$-matrix
 (i.e., a $\Pmot$-matrix with  non-positive off-diagonal terms), then the affine game is WUC and has a unique value. We also examine coalition values and we show in Section \ref{ysec1e} that the additivity property holds for a certain subclass of affine games.

 In Section \ref{sec4}, we introduce stochastic multi-period $m$-player affine games and we show that they can be solved by the backward induction. Results for single-period affine games, regarding the existence of Nash equilibria, optimal equilibria, individual values and coalition values, are extended to the class of stochastic multi-period affine games (see Theorem \ref{thmmultiperiodvaluea}). We conclude by presenting in Section \ref{sec5} the reflected BSDE associated with the multi-period affine game. In the appendix, we briefly revisit some of our previous results for redistribution games obtained in \cite{Guo2}  and we show that they can be recovered from results of this paper. We stress once again that we work throughout with {\it pure} strategies only, as will be clear anyway from the definitions of a strategy profile and an optimal equilibrium.

\subsection{Motivation: Market Games and Game Options}

Let us first provide a tentative economic rationale for the concept of an \emph{affine game}.
We stress once again the we use the term `exercise' to describe any essential decision of a player
that affects other individuals. Recall that
in a multi-player \emph{redistribution game}, introduced in  Guo and Rutkowski \cite{Guo2}, the payoff discrepancies caused by the exercising players are treated as an aggregated total before being redistributed into the payoffs of the remaining players. If we would like control the redistribution on a more granular level, that is, to individually specify how the discrepancy of each exercising player is redistributed, we must define a larger class of games. The class of affine games introduced in the present work encompasses the class of redistribution games; their aim is to describe the competition between players in a more flexible way than
in \cite{Guo2}. Each affine game, denoted hereafter by $\AG(X,P,G)$, is associated with some $m\times m$ matrix $G$, which effectively replaces the redistribution quotients and the weights introduced in \cite{Guo1,Guo2}. The following basic example illustrates an economic underpinning for the concept of a single-period deterministic game with `affine redistribution of payoffs',
which, for the sake of brevity, is henceforth dubbed an \emph{affine game.}
For the sake of concreteness, we discuss here a stylized market game between $m$ competing firms that need to make decisions about their respective business strategies. Needless to say that a range of real-life situations
to which our model can be applied is, of course, much broader.

The $m$ firms are assumed to share the same market for a particular product or service and each of them would like to maximize the value for shareholders, formally represented by the fundamental value, denoted as $V_i$. Each firm $i$ faces the choice between two business strategies:
\hfill \break (A) a safe strategy leading to a fixed value of $V_i = X_i$ (e.g., by simply continuing its current contracts with existing customers) or \hfill \break (B) a risky strategy, which results in a yet unspecified variable value, denoted as $V_i$ (e.g., by offering innovative products in order to attract more customers), which will depend, in particular, on choices made simultaneously  by the other firms. % Of course, it may happen that the strategy (B) backfires due to competition from other firms.

If all firms select option (B) then their \emph{forecasted values} are given by some predetermined values,
denoted as $P_1, \dots ,P_m$. However, the scenario that \emph{all} firms decide to choose (B) will not be an optimal solution
when there exists at least one firm for which $P_i <X_i$, since in that case
it will be clearly sub-optimal for this firm to choose (B) over (A). Let us denote by $\Eset $ the set of firms that choose option (A). As already stated, for each firm $i\in\Eset$, the value is fixed to be $V_i = X_i$ and, by convention, it is referred to as
the \emph{exercise value}. By contrast, for each firm $j\notin\Eset$, its value $V_j$  should be adjusted from the forecasted value $P_j$, so we need to specify how the discrepancy $V_j - P_j$ should be computed. In an \emph{affine game}, this adjustment is assumed to depend linearly on the differences $X_i-P_i$ for all $i\in\Eset$.

To explain intuitively the competitive nature of the market game, we consider a particular firm $i \in \Eset $ and we show how that decision affects the market share for firm $j \notin \Eset $. Note that in this step we assume that only firm $i$ has chosen option (A), so that $\Eset = \{ i\}$. Out of the relative value $X_i-P_i>0$, which is gained by firm $i$ by not selecting (B), a fixed fraction comes from another firm $j$. For concreteness, suppose that $G_{ii}>0$ is the profit per customer of firm $i$ and $-G_{ji}>0$ is the profit per customer of firm $j$ multiplied by the fraction of customers abandoning firm $j$ and moving to firm $i$, as a result of the decision of firm $i$ to choose (A). Then the ratio of changes in values between firms $i$ and $j$ is given by the ratio $G_{ji}/G_{ii}<0$. The above considerations lead to the following expression for the value of each firm $j \notin \Eset $
\begin{gather}\label{eqlm16x}
V_j = P_j + G_{ji} \big(G_{ii}\big)^{-1} \big( X_i - P_i \big) < P_j.
\end{gather}
In this step, we have specified a matrix $G$ describing the market relationships between $m$ firms. It is not yet clear,
however, what will happen if more than one firm decides to adopt strategy (A)

We will now show how to deal with a general situation for any number of firms in the set $\Eset $. We now consider the vectors of values and we postulate that the vector $V-P$ lies in the column space of the sub-matrix $G_{\cdot\Eset}$, which is obtained from $G$ by taking columns with indices $i \in \Eset$. This assumption is a natural extension of expression \eqref{eqlm16x} and, in fact, it can be shown to uniquely specify the vector $V$ of fundamental values for all firms. Indeed, by using the condition that $V_i=X_i$ for all $i \in \Eset$ and solving for $V$, we arrive at the following vector equation (see Lemma \ref{xlem1})
\[
V = P + G_{\cdot \Eset} \big(G_{\Eset\Eset}\big)^{-1} \big( X_\Eset - P_{\Eset} \big).
\]
This expression is exactly the payoff function of a single-period affine game, as will be formally introduced in Definition \ref{defjj08} below. It is also consistent, as shown in the appendix, with the specification of $V$ for redistribution
 games studied in \cite{Guo1,Guo2} and in fact it allows for a more flexible (in some sense, more firm-specific) schemes for redistribution of relative profits/losses. Let us finally observe that the inequality $P_i>X_i$ does not necessarily imply that firm $i$ will choose (B) since, as a result of adverse decisions of other firms sharing the same market, the value $V_i$ may fall below $X_i$.
 %}
%Needless to say that several other real-life applications of affine game can be studied (see Guo \cite{GuoPhD}).}
%\end{example}

The next logical step is to extend a single-period affine game to a multi-period \emph{stochastic affine  game} or $\ASG(X,G)$
(see Section \ref{sec4}). Similar to the multi-period redistribution game examined in \cite{Guo2}, a recursive formulation of the game appears to be natural. The intuition behind this specification is perhaps best explained by expanding upon the single-period market game described above. Suppose that the firms are playing the market game at time 0, but the forecasted value $P$ is taken to be the expected value if all firms choose (B) at time 0 and then an analogous game is played at time 1. This in turn depends on the expected value from time 2 and so on. Even though the fundamental value should be maximized at time 0 only, the market game now also depends on future scenarios at times $1,2, \ldots ,T$ and thus also on potential future decisions of all players. As a result, the game will have the shape of a \emph{multi-period competitive game}, as defined in Section \ref{sec4}.
%It is important to note that even though the time 0 value depend on $P$, which is a forecast depending on the decisions of the firms at times $1,2,\ldots,T$, we assume that each firm is trying to maximize the value at time 0, rather than, say,
%deciding to sacrify the time 0 profit for better future profits. This allows us to set $P$ as the \emph{value} of the game starting at time $1$.}

A particularly appealing motivation for our study of multi-person competitive games comes from the theory of arbitrage pricing of
financial derivatives. As we argue in  \cite{Guo3}, for a large class of multi-person financial contracts, the fair valuation
can be formally reduced to finding an optimal equilibrium of the corresponding multi-player game under a unique martingale
measure for the underlying market model. In particular,  Theorem 4.2 and Proposition 4.3 in \cite{Guo3} demonstrate
that the games considered in the present work correspond to multi-person game options with unique arbitrage prices given by
expected payoffs under an optimal equilibrium.  The interested reader is referred to \cite{Guo3} for a thorough examination of the issue of arbitrage pricing of multi-person contracts, not restricted to the special case of contracts with an affine structure.

% \newpage

%%%%%%%%%%%%%%%%%%%%%%%%%%%%%%%%%%%%%%%%%%%%%%%%%%%%%%%%%%%%%%%%%%%%%%%%%%%%%%%%%%%%%%
%%%%%%%%%%%%%%%%%%%%%%%%%%%%%%%%%%%%%%%%%%%%%%%%%%%%%%%%%%%%%%%%%%%%%%%%%%%%%%%%%%%%%%
\section{Competitive Games with Nonsingular Matrices} \label{sec2}
%%%%%%%%%%%%%%%%%%%%%%%%%%%%%%%%%%%%%%%%%%%%%%%%%%%%%%%%%%%%%%%%%%%%%%%%%%%%%%%%%%%%%%
%%%%%%%%%%%%%%%%%%%%%%%%%%%%%%%%%%%%%%%%%%%%%%%%%%%%%%%%%%%%%%%%%%%%%%%%%%%%%%%%%%%%%%

 We first examine the simplest case of a single-period deterministic game. Consider a game with $m$ \emph{players},  enumerated by the indices $1,2,\ldots,m$. The set of all players is denoted by $\Mset$. Let $X\in\R^m$ be the \emph{exercise payoff} and let
 $P\in\R^m$ be the \emph{terminal payoff}. We denote by $\Strat = \{0,1\}^m $ the class of strategy profiles where 0 (resp., 1) corresponds to \emph{exercise} (resp., \emph{continue}). We denote by $V(s)\in\R^m$ the vector
 (all vectors are understood to be column vectors) of payoffs obtained by players when a strategy profile $s$ is carried out.

To specify explicitly the vector of payoffs, we introduce a matrix $G\in\R^{m\times m}$, such that the $i$th column of $G$ is formally attached to player $i$.  Let $\Eset(s) :=\{i\in\Mset: s^i=0\}$ be the set of exercising players.  For any $s \in \Strat $, let $G_{\cdot \Eset (s)}$ be the sub-matrix obtained from $G$ by taking columns with indices from $\Eset (s)$.
We propose the following definition of a single-period $m$-player competitive game with affine
redistribution of payoffs, dubbed an {\it affine game}. It extends the concept of an $m$-player redistribution game
introduced and examined in \cite{Guo1,Guo2} (see Definition 2.1 in \cite{Guo1} or Definition 4 in \cite{Guo2}).
Let us stress that single-period affine games are merely building blocks for multi-period stochastic affine games studied in Section~\ref{sec3}.

\begin{definition} \label{defjj08} {\rm
Let $X, P\in\R^m$ be fixed vectors and $G\in\R^{m\times m}$ be a matrix with a positive diagonal and non-zero principal minors. An \emph{$m$-player affine game} $\AG (X,P,G)$ is a single-period deterministic game in which each player $i$ can either choose to exercise ($s^i=0$) or not exercise ($s^i=1$). For any strategy profile $s=(s^1,\ldots,s^m) \in \Strat $, the payoff vector
$V(s)=(V_1(s), \dots , V_m(s))$  is given by
\begin{gather}   \label{eqlm16}
V(s) = P+G_{\cdot \Eset(s)} \big(G_{\Eset(s)\Eset(s)}\big)^{-1}(X_{\Eset (s)} -P_{\Eset (s)})
\end{gather}
where $\Eset(s) = \{i\in\Mset: s^i=0\}$ is the set of exercising players.}
\end{definition}

  Let us make a comment on the difference between the class of redistribution games examined in \cite{Guo1,Guo2} and a more general class of game with affine redistribution of payoffs. For concreteness, let us assume that $m=5$
 and three players decide to exercise at time 0. Then, in a  redistribution game, the `losses' of the two non-exercising players only depend on the aggregated `gain' of exercising players, whereas in a more general case of an affine game they may also depend
 on a distribution of `gains' among three exercising players. Hence, using the concept of an affine game, we are able to address with more precision and flexibility the individual relationships between non-exercising and exercising players, and not only between cohorts of players.
%\begin{example} {\rm A redistribution game with four players .........
%
%} \end{example}
%
%\begin{example} {\rm An affine game with four players  ..........
%
%} \end{example}

To sum up, in an affine game the redistribution of losses among non-exercising players takes
into account the exact structure of gains made by exercising players, whereas in a redistribution game
only the total gain of exercising players matters. It is thus clear that the former set-up has the ability
to cover a wider spectrum of real-life applications. Other specifications of multi-person games are also
of interest. For instance, in \cite{Guo2} we examined the case of \emph{quitting games}
but, unfortunately, the crucial requirement of the existence of an optimal equilibrium is typically not satisfied
when a quitting game is played in a stochastic environment.

From the mathematical perspective, it is interesting to note that formula \eqref{eqlm16} can be derived by postulating that,
for any strategy profile $s \in \Strat $, the payoff deviation $V(s)-P$  lies in the column space of the matrix $G_{\cdot \Eset (s)}$. Formally, we postulate that the payoff function $V : \Strat \to \R^m$ satisfies
\begin{equation} \label{eqjj01}
\left\{
\begin{array}
[c]{l}
V_i(s)=X_i, \quad \forall\,i\in\Eset(s), \medskip\\
V(s)-P = G a(s), \quad a (s) \in\R^m , \medskip\\
a_i (s) =0, \quad \forall\,i\notin\Eset(s).
\end{array}
\right.
\end{equation}
By scaling $a_i$ appropriately, we may and do assume, without loss of generality, that $G$ has only positive diagonal terms.
The following lemma furnishes an explicit formula for the payoff function $V$ that satisfies conditions \eqref{eqjj01}
for every strategy profile $s \in \Strat $.

\begin{lemma} \label{xlem1}
Let $G\in\R^{m\times m}$ be a matrix with a positive diagonal and non-zero principal minors. Then  the payoff function $V$ can be written in terms of $X, P$ and $G$ as in \eqref{eqlm16}.
%\begin{gather}\label{eqlm16}
%V(s)= P+G_{\cdot \Eset(s)} \big(G_{\Eset(s)\Eset(s)}\big)^{-1}(X_{\Eset (s)} -P_{\Eset (s)}), \quad \forall \, s \in \Strat.
%\end{gather}
\end{lemma}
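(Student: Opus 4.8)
The plan is to solve the system \eqref{eqjj01} directly for each fixed strategy profile $s\in\Strat$. Write $\Eset = \Eset(s)$ and let $\Eset^c = \Mset \setminus \Eset$ be the complementary set of non-exercising players. The third condition in \eqref{eqjj01} says that $a(s)$ is supported on $\Eset$, so that $G a(s) = G_{\cdot\Eset}\, a_\Eset(s)$, where $a_\Eset(s)$ is the subvector of $a(s)$ with coordinates in $\Eset$. Hence the second condition becomes $V(s) - P = G_{\cdot\Eset}\, a_\Eset(s)$. Restricting this vector identity to the rows with indices in $\Eset$ and invoking the first condition $V_i(s) = X_i$ for $i\in\Eset$, we obtain the square linear system $X_\Eset - P_\Eset = G_{\Eset\Eset}\, a_\Eset(s)$.

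The key point is that $G_{\Eset\Eset}$ is invertible: its determinant is the principal minor of $G$ indexed by $\Eset$, which is non-zero by hypothesis (and for $\Eset = \emptyset$ the statement is trivial since then $V(s) = P$). Therefore $a_\Eset(s) = \big(G_{\Eset\Eset}\big)^{-1}\big(X_\Eset - P_\Eset\big)$ is uniquely determined, and substituting back into $V(s) - P = G_{\cdot\Eset}\, a_\Eset(s)$ yields
\[
V(s) = P + G_{\cdot\Eset(s)} \big(G_{\Eset(s)\Eset(s)}\big)^{-1}\big(X_{\Eset(s)} - P_{\Eset(s)}\big),
\]
which is precisely \eqref{eqlm16}. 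Conversely, one checks that this $V(s)$, together with $a(s)$ defined by extending $a_\Eset(s)$ by zeros, satisfies all three conditions in \eqref{eqjj01}: the support condition holds by construction, the column-space condition holds because $G a(s) = G_{\cdot\Eset} a_\Eset(s) = V(s) - P$, and the rows indexed by $\Eset$ of $G_{\cdot\Eset} a_\Eset(s)$ equal $G_{\Eset\Eset} a_\Eset(s) = X_\Eset - P_\Eset$, so $V_i(s) = P_i + (X_i - P_i) = X_i$ for $i\in\Eset$. This establishes both existence and uniqueness of the payoff function satisfying \eqref{eqjj01}.

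There is essentially no serious obstacle here; the argument is a routine block-decomposition of a linear system. The only thing to be slightly careful about is the bookkeeping of index sets — making sure that restricting the identity $V(s)-P = G_{\cdot\Eset}a_\Eset(s)$ to rows in $\Eset$ produces exactly $G_{\Eset\Eset}$ and not some other submatrix — and the degenerate case $\Eset(s) = \emptyset$, where all expressions involving $\big(G_{\Eset\Eset}\big)^{-1}$ should be read as vanishing and \eqref{eqlm16} reduces to $V(s) = P$, consistent with \eqref{eqjj01}. The non-singularity of every $G_{\Eset\Eset}$, which is exactly the hypothesis that all principal minors of $G$ are non-zero, is what makes the solution well-defined for every strategy profile simultaneously.
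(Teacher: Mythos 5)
Your argument is correct and follows essentially the same route as the paper's proof: use the support condition to reduce $Ga(s)$ to $G_{\cdot\Eset}a_\Eset(s)$, restrict to rows in $\Eset(s)$, and invert $G_{\Eset(s)\Eset(s)}$ (non-singular by the principal-minor hypothesis) to solve for $a_{\Eset(s)}(s)$ and hence $V(s)$. The extra verification of the converse and the treatment of $\Eset(s)=\emptyset$ are fine additions but not a different method.
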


\begin{proof}
%If only player $i$ exercises, then $\Eset(s)=\{i\}$ with $a_i(s)$ being the only non-zero component of $a(s)$ and thus \eqref{eqjj01} yields $V(s)=P+G_{\cdot i}a_i (s)$. Since $V_i(s)=X_i$ for player $i$,  we find that $a_i(s)=(G_{ii})^{-1}(X_i-P_i)$. Hence the payoff function can be written in terms of $X, P$ and $G$, to wit,
%\[
%V(s) = P+ G_{\cdot i} (G_{ii})^{-1}(X_i-P_i).
%\]
%If two or more players exercise simultaneously, the same argument can still be applied.
The last two conditions in \eqref{eqjj01}
 imply that the payoff vector satisfies
\[
V(s)=P+G_{\cdot \Eset(s)} a_{\Eset(s)} (s)
\]
where in $a_{\Eset(s)} (s)$ we take terms with indices in $\Eset(s)$. From the first condition in \eqref{eqjj01},
we obtain $V_i(s)=X_i$ for all $i\in \Eset(s)$ and thus
\[
a_{\Eset(s)}(s) =\big(G_{\Eset(s) \Eset(s)}\big)^{-1}(X_\Eset-P_\Eset)
\]
since the sub-matrix $G_{\Eset(s) \Eset(s)}$ is non-singular. We conclude that, for every strategy profile $s \in \Strat$, the payoff $V(s)$ can be explicitly written as in \eqref{eqlm16}.
\end{proof}

In Guo and Rutkowski \cite{Guo2}, we introduced the concept of the multi-player \emph{general redistribution game}.
It was shown there that the payoff of the general redistribution game can be expressed in terms of
a suitable projection. A similar result will be established here for the $m$-player affine game. To this end,
we will need the following elementary lemma.

\begin{lemma} \label{lemjq01}
Let $\pi$ be the projection mapping induced by the inner product $\langle\cdot,\cdot\rangle$ in $\R^m$.
Let $x\in\R^m$ be any vector and $\K\subseteq\R^m$ be any closed, convex set. Suppose that $y\in\K$.
Then $y=\pi_{\K}(x)$ if and only if $\langle y-x, z-y\rangle \geq 0$ for all $z\in\K$.
\end{lemma}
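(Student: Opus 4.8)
The plan is to establish the two implications separately, in both cases exploiting the algebraic identity obtained by expanding $\norm{z-x}^2 = \norm{(z-y)+(y-x)}^2$ for $z \in \K$:
\[
\norm{z-x}^2 = \norm{z-y}^2 + 2\langle z-y,\, y-x\rangle + \norm{y-x}^2 .
\]

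For the sufficiency direction, I would assume that $\langle y-x,\, z-y\rangle \geq 0$ for every $z \in \K$. The identity above then gives $\norm{z-x}^2 \geq \norm{y-x}^2$ for all $z\in\K$, since $\norm{z-y}^2 \geq 0$ and the cross term is nonnegative by hypothesis. As $y\in\K$, this shows that $y$ attains $\min_{z\in\K}\norm{z-x}$, hence $y = \pi_{\K}(x)$ by the defining property of the metric projection.

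For the necessity direction, I would assume $y = \pi_{\K}(x)$, fix an arbitrary $z\in\K$, and invoke the convexity of $\K$: the segment point $y_t := (1-t)y + tz$ belongs to $\K$ for all $t\in[0,1]$. Minimality of the distance at $y$ yields $\norm{y-x}^2 \leq \norm{y_t-x}^2$; expanding the right-hand side as $\norm{y-x}^2 + 2t\langle z-y,\, y-x\rangle + t^2\norm{z-y}^2$ and cancelling $\norm{y-x}^2$ gives $0 \leq 2t\langle z-y,\, y-x\rangle + t^2\norm{z-y}^2$ for every $t\in(0,1]$. Dividing by $t>0$ and letting $t\downarrow 0$ produces $\langle y-x,\, z-y\rangle \geq 0$, and since $z\in\K$ was arbitrary, the claim follows.

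The argument involves no genuine obstacle: each implication is a short computation, and the only place convexity is used is in the necessity direction, to guarantee that the comparison points $y_t$ along the segment from $y$ to $z$ are admissible. For completeness one might recall that $\pi_{\K}(x)$ is well defined whenever $\K$ is closed and convex --- existence by minimizing the continuous map $z \mapsto \norm{z-x}$ over the compact set $\K \cap \{z : \norm{z-x} \leq \norm{w-x}\}$ for some fixed $w\in\K$, and uniqueness from strict convexity of the Euclidean norm (equivalently, the parallelogram law) --- but since the statement already presupposes the projection mapping $\pi$, I would take this for granted rather than reprove it.
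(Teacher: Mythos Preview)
Your proof is correct and is the standard textbook argument for the variational characterization of the metric projection onto a closed convex set. The paper does not actually supply a proof of this lemma; it is stated as an elementary auxiliary result and used without justification, so there is nothing further to compare.
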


\begin{proposition} \label{propjj09}
%(i) For any strategy profile $s\in\Strat$, the payoff $V(s)$ of $\AG (X,P,G)$ may be represented as follows:  there exists $a(s)\in\R^m$ such that
%\begin{equation} \label{eqjj10}
%\left\{
%\begin{array}
%[c]{l}
%V(s) = P + G a(s), \medskip\\
%V_i(s)=X_i,\quad \forall\,i\in\Eset(s), \medskip\\
%a_i(s)=0,\quad \forall\,i\notin\Eset(s).
%\end{array}
%\right.
%\end{equation}
The payoff function $V(s)$ satisfies $V(s)\in\H_{\Eset(s)}$ and
\begin{gather}\label{eqjq101}
\big(G^{-1}(V(s)-P)\big)^{\mathsf T}(y-V(s))=0, \quad \forall \,y\in\H_{\Eset(s)},
\end{gather}
where $\H_{\Eset(s)} := \big\{ {x} \in \R^m :\,  x_i = X_i,\ \forall\, i\in \Eset(s)\big\}$.
If $G$ is a positive definite symmetric matrix, then
\begin{gather}\label{eqjq102}
V(s)=\pi^{G^{-1}}_{\H_{\Eset(s)}}(P)
\end{gather}
where the projection $\pi^{G^{-1}}_{\H_{\Eset(s)}} : \R^m \to \H_{\Eset(s)}$ is taken under the inner product $\langle x, y\rangle^{G^{-1}} = x^{\mathsf T} G^{-1} y$.
\end{proposition}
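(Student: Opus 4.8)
The plan is to read off the three assertions directly from the structure \eqref{eqjj01} that defines the payoff function, and then to invoke the variational characterisation of the projection supplied by Lemma \ref{lemjq01}. Recall first that $G$ is invertible, since its largest principal minor $\det G$ is non-zero, so $G^{-1}$ is well defined.

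First I would note that, by construction, $V(s)$ satisfies all three conditions in \eqref{eqjj01}. The first of these states precisely that $V_i(s)=X_i$ for every $i\in\Eset(s)$, which is the defining property of $\H_{\Eset(s)}$; hence $V(s)\in\H_{\Eset(s)}$. The second and third conditions state that $V(s)-P=Ga(s)$ for a vector $a(s)$ supported on $\Eset(s)$, i.e. $a_i(s)=0$ for all $i\notin\Eset(s)$; applying $G^{-1}$ this is the same as $G^{-1}(V(s)-P)=a(s)$. To obtain \eqref{eqjq101}, fix any $y\in\H_{\Eset(s)}$. Then $y_i=X_i=V_i(s)$ for all $i\in\Eset(s)$, so the vector $y-V(s)$ vanishes on $\Eset(s)$, whereas $G^{-1}(V(s)-P)=a(s)$ vanishes off $\Eset(s)$. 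Their Euclidean inner product is therefore a sum over all coordinates in which every single summand is zero, which gives $\big(G^{-1}(V(s)-P)\big)^{\mathsf T}(y-V(s))=0$.

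Finally, suppose $G$ is symmetric and positive definite. Then $G^{-1}$ is symmetric and positive definite as well, so $\langle x,y\rangle^{G^{-1}}=x^{\mathsf T}G^{-1}y$ is a genuine inner product on $\R^m$. The set $\H_{\Eset(s)}$ is an affine subspace, hence closed and convex, and we have already shown $V(s)\in\H_{\Eset(s)}$. Applying Lemma \ref{lemjq01} with this inner product, with $x=P$, $y=V(s)$ and $\K=\H_{\Eset(s)}$, we get that $V(s)=\pi^{G^{-1}}_{\H_{\Eset(s)}}(P)$ holds if and only if $\langle V(s)-P,z-V(s)\rangle^{G^{-1}}\ge 0$ for all $z\in\H_{\Eset(s)}$. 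Using the symmetry of $G^{-1}$ we rewrite this pairing as $\big(G^{-1}(V(s)-P)\big)^{\mathsf T}(z-V(s))$, which equals $0$ for every $z\in\H_{\Eset(s)}$ by \eqref{eqjq101}. In particular the required inequality holds, indeed with equality, so \eqref{eqjq102} follows.

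I expect the only point that needs genuine care is the bookkeeping of supports in the second step: one must check that the two vectors being paired, namely $G^{-1}(V(s)-P)$ and $y-V(s)$, have complementary coordinate supports, so that the pairing vanishes termwise. Everything else is essentially immediate — in particular, the variational inequality of Lemma \ref{lemjq01} is satisfied here not just as an inequality but as an identity, which is exactly the content of \eqref{eqjq101}.
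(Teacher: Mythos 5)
Your proof is correct and follows essentially the same route as the paper: identify $G^{-1}(V(s)-P)$ with $a(s)$ via \eqref{eqjj01}, observe that $a(s)$ and $y-V(s)$ have complementary supports so the pairing vanishes termwise, and then deduce \eqref{eqjq102} from Lemma \ref{lemjq01}. Your write-up is just a more explicit version of the paper's argument, so no further comment is needed.
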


\begin{proof}
%Part (i) is obvious by the initial construction of $\AG(X,P,G)$ (see \eqref{eqjj01}).
In view of \eqref{eqjj01},  condition \eqref{eqjq101} can be rewritten as
\begin{gather}\label{eqjq05}
a(s)^{\mathsf T}(y-V(s))=0 , \quad \forall \, y \in \H_{\Eset(s)},
\end{gather}
where $a(s)$ satisfies $a_i(s)=0$ for all $i\notin\Eset(s)$. For all $i\in\Eset$, $y_i-V_i(s)=X_i-X_i=0$. So \eqref{eqjq05} clearly holds. If $G$ is a positive definite symmetric matrix, then \eqref{eqjq102} follows immediately from the property of projection given recalled in Lemma \ref{lemjq01}.
\end{proof}

For the reader's convenience, we recall the definition of Nash and optimal equilibria. Let $\Strat^k$ (resp. $\Strat^{-k}$)
be the strategy set for player $k$ (resp. all other players). With a slight abuse of notation, we find it convenient to represent an arbitrary strategy profile $s$ as $(s^k, s^{-k})$ where $s^k \in \Strat^k$ and $s^{-k} \in \Strat^{-k}$.

\begin{definition} \label{defaa02} {\rm
(i) A strategy profile $\si= ( \si^{1},\ldots,\si^{m}) \in\Strat$ is called a \emph{Nash equilibrium}, or simply an \emph{equilibrium}, if no single player can improve her  payoff by altering her  own strategy, that is,
\begin{gather}\label{eqaa011}
V_k(\si^{k}, \si^{-k}) = \sup_{s^k\in\Strat^k} V_k(s^k, \si^{-k}), \quad \forall \, k\in\Mset .
\end{gather}
\noindent (ii) A Nash equilibrium $\si= (\si^{1},\ldots,\si^{m} ) \in\Strat$ is called an \emph{optimal equilibrium} whenever
\begin{gather}\label{eqaa021}
V_k(\si^k, \si^{-k}) = \inf_{s^{-k}\in \Strat^{-k}} V_k(\si^k, s^{-k}), \quad \forall \, k\in\Mset .
\end{gather}
\noindent (iii) The \emph{value} $V^*$ of the game is defined by
\begin{gather}\label{eqaa024}
V^*_k = \sup_{s^k\in\Strat^k} \inf_{s^{-k}\in \Strat^{-k}} V_k(s^k, s^{-k})
= \inf_{s^{-k}\in \Strat^{-k}} \sup_{s^k\in\Strat^k} V_k(s^k, s^{-k}), \quad \forall \, k\in\Mset,
\end{gather}
assuming the equality in \eqref{eqaa024} holds.}
\end{definition}

\begin{remark} {\rm Equality \eqref{eqaa021} is equivalent to $V_k(\si^{k}, \si^{-k}) \geq V_k(s^k, \si^{-k})$ for all $s^k\in\Strat^k$.
When combined with condition \eqref{eqaa011} of a Nash equilibrium, an optimal equilibrium $\si$ satisfies
\[
V_k(\si^k, \si^{-k}) = \inf_{s^{-k}\in \Strat^{-k}} V_k(\si^k, s^{-k})
= \sup_{s^k\in\Strat^k} V_k(s^k, \si^{-k}), \quad \forall \, k\in\Mset ,
\]
or, equivalently, for every $k \in \Mset $,
\[
V_k(\si^k, s^{-k})\geq V_k(\si^k, \si^{-k}) \geq V_k(s^k, \si^{-k}),
\quad \forall \, s^k\in\Strat^k, \, \forall \, s^{-k}\in \Strat^{-k}.
\]
It is easy to show that the existence of optimal equilibria implies the existence of the value.
 Moreover, the payoff of any optimal equilibrium coincides with the value.}
\end{remark}

The next natural step is to investigate the {\it weakly unilaterally competitive} (WUC) property introduced by Kats and Thisse \cite{Kats} and the existence of Nash and/or optimal equilibria in $\AG(X,P,G)$. Specifically, we will address the following questions:
\begin{itemize}
\item Which choice of $G$ ensures that $\AG(X,P,G)$ has the  WUC property for all $X$ and $P$?
\item Which choice of $G$ guarantees a Nash equilibrium in $\AG(X,P,G)$ for all $X$ and $P$?
\end{itemize}
We will first attempt to gain some preliminary insight by analyzing the case where only one player exercises.
For the first question, recall from \cite{Kats} that the WUC property requires that, for every $k, l \in\Mset$,
\begin{align*}
V_k(s^k,s^{-k}) > V_k(\si^k,s^{-k})\ &\implies \ V_l(s^k,s^{-k}) \leq
 V_l(\si^k,s^{-k}), \\
V_k(s^k,s^{-k}) = V_k(\si^k,s^{-k})\ &\implies \ V_l(s^k,s^{-k}) = V_l(\si^k,s^{-k}),
\end{align*}
for all $s^k,\si^k\in\Strat^k$ and $s^{-k}\in\Strat^{-k}$. Let us consider the strategy profiles $s$ and $s'$ corresponding to $\Eset(s)=\emptyset$ and $\Eset(s')=\{k\}$. By applying the WUC property, we obtain, for all $l \neq k$,
\begin{align*}
V_k(s) > V_k(s')\ &\implies \ V_l(s) \leq V_l(s'),\\
V_k(s) < V_k(s')\ &\implies \ V_l(s) \geq V_l(s'),\\
V_k(s) = V_k(s')\ &\implies \ V_l(s) = V_l(s').
\end{align*}
It is also clear that $V(s)=P$ and $V(s')=P+G_{\cdot k} (G_{kk})^{-1} (X_k-P_k)$. Since $G_{kk}> 0$, it follows that $G_{lk} \leq 0$
for all $l \ne k$ and thus every off-diagonal term in $G$ must be non-positive. This is simply a necessary condition for the
WUC property, and it is by no means a sufficient one. However, it does motivate the choice of $\Zmat$-matrices
(see Definition \ref{defjj20}).

For the second question, we once again consider the strategy profiles $s$ and $s'$ corresponding to $\Eset(s)=\emptyset$ and $\Eset(s')=\{k\}$. If $s'$ is a Nash equilibrium, then
\[
X_k = V_k(s') \geq V_k(s) = P_k
\]
and thus $a_k=(G_{kk})^{-1} (X_k-P_k) \geq 0$. Furthermore, it is clear that $V_i(s)\geq X_i$ for all $i\in\Mset$.
%In other words $V\in\Oo$ where $\Oo$ is the orthant
%\begin{gather}\label{eqjj15}
%\Oo=\big\{(x_1,\ldots,x_m)\in\R^m : X_i\leq x_i, 1\leq i \leq m\big\}.
%\end{gather}
When we add these constraints to \eqref{eqjj01}, it is clear that we are facing the problem that is reminiscent to what is known as the {\it linear complementarity problem}.

%%%%%%%%%%%%%%%%%%%%%%%%%%%%%%%%%%%%%%%%%%%%%%%%%%%%%%%%%%%%%%%%%
\subsection{Linear Complementarity Problems} \label{secLCP}
%%%%%%%%%%%%%%%%%%%%%%%%%%%%%%%%%%%%%%%%%%%%%%%%%%%%%%%%%%%%%%%%%

We will now briefly review results for the linear complementarity problem, which will be used in what follows. For more details, the reader is referred to the monographs by Cottle et al. \cite{Cottle} and Facchinei and Pang \cite{Facchinei}.

\begin{definition} {\rm
Given $q\in\R^m$ and $M\in\R^{m\times m}$, the \emph{linear complementarity problem} $\LCP(q,M)$ is to search for
a vector $z\in\R^m$ satisfying:
\begin{equation*} % \label{eqejj10}
\left\{
\begin{array}
[c]{l}
z\geq 0, \medskip\\
q+Mz\geq 0, \medskip\\
z^{\mathsf T}(q+Mz) = 0,
\end{array}
\right.
\end{equation*}
where the inequalities are taken component-wise.}
\end{definition}

\begin{remark} {\rm
It is common to denote $w=q+Mz$ and equivalently state the problem as follows: find vectors $z, w \in \R^m$ satisfying:
\begin{equation} \label{ecejj10}
\left\{
\begin{array}
[c]{l}
w=q+Mz, \medskip\\
z\geq 0,\, w\geq 0,\medskip\\
z^{\mathsf T} w = 0 .
\end{array}
\right.
\end{equation}
In the remainder of this paper, unless explicitly specified otherwise, we refer to the pair $(z,w)$ whenever a solution of $\LCP(q,M)$ is mentioned.}
\end{remark}

We will need the following definition, which is taken from Fiedler and Pt\'ak \cite{Fiedler}.

\begin{definition}\label{defjj20} {\rm
Let $M$ be a real square matrix. \hfill \break
(i) If the principal minors of $M$ are all positive, then it is a \emph{$\Pmat$-matrix}. \hfill \break
(ii) If the off-diagonal terms of $M$ are all non-positive, then it is a \emph{$\Zmat$-matrix}. \hfill \break
(iii) If $M$ is both a $\Pmat$-matrix and a $\Zmat$-matrix, then it is a \emph{$\Kmat$-matrix}.}
\end{definition}

%The following lemma, due to Fiedler and Pt\'{a}k \cite{Fiedler},  furnishes a useful characterization of $\Pmat$-matrices.
%Note that part (i) can be interpreted as follows: $M$ is a $\Pmat$-matrix if and only if $M$ does not ``reverse the sign'' of any non-zero vector.
%
%\begin{lemma} \label{lemlc02}
%(i) An $m\times m$ real matrix $M$ is a $\Pmat$-matrix if and only if, for every non-zero vector $x\in \R^m$ and $y=Mx$, there exists $i$ such that $x_iy_i > 0$. \hfill \break
%(ii) A square real matrix $M$ is a $\Pmat$-matrix if and only if $M^{-1}$ is a $\Pmat$-matrix.
%\end{lemma}

The next result shows that the existence and uniqueness result for the solution of the $\LCP(q,M)$ holds whenever $M$ is a $\Pmat$-matrix. For the proof of Proposition \ref{proplc03}, see Section 3.3 in Cottle et al.~\cite{Cottle}.

\begin{proposition} \label{proplc03}
The problem $\LCP(q,M)$ has a unique solution $z\in \R^m$ for all $q\in \R^m$ if and only if $M$ is a $\Pmat$-matrix.
%In other words, there exists vectors $z$ and $w$ satisfying \eqref{ecejj10}.
%\begin{equation} \label{eqlc03}
%\left\{
%\begin{array}
%[c]{l}
%w=q+Mz, \medskip\\
%z\geq 0,\, w\geq 0, \medskip\\
%z^{\mathsf T}w = 0.
%\end{array}
%\right.
%\end{equation}
\end{proposition}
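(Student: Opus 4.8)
The plan is to prove both implications of Proposition \ref{proplc03}, following the classical development in Cottle et al.\ but organising the argument so that it fits our framework. First I would establish the "if" direction: assuming $M$ is a $\Pmat$-matrix, show that $\LCP(q,M)$ has at least one solution and that it is unique, for every $q \in \R^m$. For uniqueness, I would argue by contradiction: suppose $(z^1,w^1)$ and $(z^2,w^2)$ are two distinct solutions, set $z = z^1 - z^2$ and $w = w^1 - w^2 = Mz$, and examine the index set $I = \{i : z_i \neq 0\}$. The complementarity conditions $ (z^j)^{\mathsf T} w^j = 0$ together with nonnegativity force, for each $i \in I$, that $z_i$ and $w_i = (Mz)_i$ have opposite (weak) signs, i.e.\ $z_i (Mz)_i \le 0$ for all $i$, with strict inequality impossible only when one factor vanishes; a short case analysis on the signs shows in fact $z_i(Mz)_i \le 0$ for every $i$ and $z \ne 0$. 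But a $\Pmat$-matrix has the well-known sign-reversal property: for any nonzero $z$ there exists an index $i$ with $z_i(Mz)_i > 0$. This contradiction yields uniqueness. Existence I would obtain either by a degree-theoretic or homotopy argument (deforming $q$ to a vector for which the solution is obvious, using that the solution set stays bounded because of the $\Pmat$-property), or simply by invoking that Lemke's algorithm terminates with a solution when $M$ is a $\Pmat$-matrix; since the excerpt explicitly permits citing Cottle et al., I would state existence with that reference and give the uniqueness argument in full.

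For the "only if" direction, I would assume that $\LCP(q,M)$ has a unique solution for every $q$ and deduce that $M$ is a $\Pmat$-matrix. The natural route is the contrapositive via the sign-reversal characterisation: if $M$ is \emph{not} a $\Pmat$-matrix, then (by the Fiedler--Pt\'ak characterisation, which is exactly the source we are citing) there is a nonzero vector $z$ such that $z_i (Mz)_i \le 0$ for all $i$. From such a $z$ one constructs a $q$ for which $\LCP(q,M)$ has two solutions: typically one splits $z = z^+ - z^-$ into positive and negative parts and chooses $q$ so that both $(z^+, \cdot)$ and $(z^-, \cdot)$ (or $0$ and $z$) solve the problem, violating uniqueness. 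One must check carefully that the constructed $q$ indeed makes both candidates satisfy all three conditions in \eqref{ecejj10}; this is the one place where the bookkeeping with the partition of indices into $\{z_i>0\}$, $\{z_i<0\}$, $\{z_i=0\}$ needs attention.

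The main obstacle I anticipate is the sign-reversal lemma for $\Pmat$-matrices, namely the equivalence between "all principal minors positive" and "for every nonzero $z$, some coordinate $i$ has $z_i(Mz)_i > 0$." Proving this from scratch requires an induction on the dimension together with a Schur-complement / principal-pivot computation, and it is precisely the content that Fiedler and Pt\'ak \cite{Fiedler} (already cited for Definition \ref{defjj20}) and Cottle et al.\ \cite{Cottle} establish. Given that the excerpt directs the reader to Section 3.3 of \cite{Cottle} for the full proof, my proposal is to quote this characterisation as a known result and build the LCP existence-uniqueness equivalence on top of it, presenting the contradiction arguments above in enough detail to be self-contained modulo that single structural fact. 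If a fully self-contained treatment were wanted, the fallback is to carry out the dimension induction explicitly, but that would essentially reproduce the cited monograph and seems unnecessary here.
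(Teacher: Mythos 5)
Your proposal is correct and follows essentially the route the paper itself relies on: the paper gives no proof of Proposition \ref{proplc03} beyond citing Section 3.3 of Cottle et al.\ \cite{Cottle}, and your sketch is a faithful reconstruction of that classical argument (sign-reversal/non-reversal characterisation of $\Pmat$-matrices for uniqueness, Lemke/degree theory for existence, and the $z = z^{+} - z^{-}$ construction of a $q$ with two solutions for the converse). One small simplification: in the uniqueness step the inequality $z_i(Mz)_i \le 0$ needs no case analysis, since $(z^1_i - z^2_i)(w^1_i - w^2_i) = -z^1_i w^2_i - z^2_i w^1_i \le 0$ directly from nonnegativity and complementarity.
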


\begin{remark}{\rm
 It is worth noting that the arguments used in the proof of Proposition \ref{proplc03} may be adapted to the linear complementarity problem in a general rectangular region. Specifically, given $l, u\in\R^m$ and $M \in \R^{m\times m}$ with $l_i<u_i$ for all $i=1,2, \dots ,m$, the problem
\begin{equation*}
\left\{
\begin{array}
[c]{l}
w = q + Mz,\quad  % \medskip\\
l\leq z\leq u, \medskip\\
\I_{\{z_i>l_i\}} \I_{\{w_i>0\}} =\I_{\{z_i<u_i\}} \I_{\{w_i<0\}}=0, \quad i=1,2,\ldots,m,
\end{array}
\right.
\end{equation*}
has a solution $(z,w)\in\R^m\times \R^m$ for all $q\in\R^m$ if and only if $M$ is a $\Pmat$-matrix. Furthermore, the bounds $l_i$ and $u_i$ may be set to $-\infty$ and $\infty$, respectively.}
\end{remark}

\begin{remark}{\rm
If we set $w=V(s)-X,\, q=P-X,\, z=a(s)$ and $M=G$, then $\LCP(q,M)$ resembles the system of equations associated with the affine game $\AG(X,P,G)$. It is still not clear, however, whether a solution of  $\LCP(q,M)$ necessarily corresponds to a Nash equilibrium
of the game $\AG(X,P,G)$. This important issue will be addressed in Theorem \ref{thmjj30} below.}
\end{remark}

%%%%%%%%%%%%%%%%%%%%%%%%%%%%%%%%%%%%%%%%%%%%%%%%%%%%%%%%%%%%%%%%%%%%%%%%%%%%
% \subsection{Projection}
%%%%%%%%%%%%%%%%%%%%%%%%%%%%%%%%%%%%%%%%%%%%%%%%%%%%%%%%%%%%%%%%%%%%%%%%%%%%

% It is well known that the solution of $\LCP(q,M)$ can also be written in terms of projections.
The following result summarizes the well-known properties of a solution of $\LCP (q,M)$.

\begin{proposition} \label{propjp20}
Let us fix $q\in\R^m$ and $M\in\R^{m\times m}$. For any $z\in\R^m$, let us set $w=q+Mz$. \hfill \break
(i) The following statements are equivalent: \hfill \break
(a)  $(z,w)$ is a solution to $\LCP (q,M)$, \hfill \break
(b)  $(z,w)$ satisfies $z=\proj{z-w}{\R^m_+}$, \hfill \break
(c)  $(z,w)$ satisfies $w^{\mathsf T}(y-z) \geq 0, \forall\,y\in\R^m_+$. \hfill \break
%(d)  $(z,w)$ satisfies $w=\proj{w-z}{\R^m_+}$. \hfill \break
%(e)  $(z,w)$ satisfies $z^{\mathsf T}(y-w) \geq 0, \forall\,y\in\R^m_+$. \hfill \break
Here the projection $\pi : \R^m \to \R^m_+$ is under the Euclidean norm and $\R^m_+=\{x\in\R^m:x\geq 0\}$. \hfill \break
(ii) Let $M$ be a positive definite symmetric matrix. If $(z,w)$ is a solution to $\LCP (q,M)$, then
\begin{equation}\label{eqjp22}
z=\pi^{M}_{\R^m_+}(-M^{-1}q),\quad w=\pi^{M^{-1}}_{\R^m_+}(q),
\end{equation}
where the projections $\pi^{M}_{\R^m_+}: \R^m \to \R^m_+$ and $\pi^{M^{-1}}_{\R^m_+} : \R^m \to \R^m_+$ are taken under the inner products $\langle x, y\rangle^{M} = x^{\mathsf T} M y$ and $\langle x, y\rangle^{M^{-1}} = x^{\mathsf T} M^{-1} y$, respectively.
\end{proposition}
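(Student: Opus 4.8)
The plan is to prove part (i) via the equivalences $\text{(a)}\Leftrightarrow\text{(b)}$ and $\text{(b)}\Leftrightarrow\text{(c)}$, and then to obtain part (ii) as a direct application of part (i) together with the variational characterisation of projections recalled in Lemma \ref{lemjq01}, now used with the two weighted inner products.

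For $\text{(a)}\Leftrightarrow\text{(b)}$ I would argue coordinate by coordinate, using the elementary fact that the Euclidean projection onto $\R^m_+$ acts componentwise as $x\mapsto\max(x,0)$, so that (b) reads $z_i=\max(z_i-w_i,0)$ for every $i$. If this holds, a short case split on the sign of $z_i-w_i$ gives $z_i\geq 0$, $w_i\geq 0$ and $z_iw_i=0$; conversely, if $z_i\geq 0$, $w_i\geq 0$ and $z_iw_i=0$, then either $z_i=0$, whence $z_i-w_i=-w_i\leq 0$ and $\max(z_i-w_i,0)=0=z_i$, or $z_i>0$, whence $w_i=0$ and $\max(z_i-w_i,0)=z_i$. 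Summing $z_iw_i=0$ over $i$ recovers $z^{\mathsf T}w=0$, so (a) and (b) are equivalent. For $\text{(b)}\Leftrightarrow\text{(c)}$ I would invoke Lemma \ref{lemjq01} with the Euclidean inner product, the closed convex set $\K=\R^m_+$, the point $x=z-w$, and the candidate $y=z$ (which lies in $\R^m_+$ under (b), and which is the standing feasibility requirement when speaking of a solution in (c)): since $y-x=z-(z-w)=w$, the lemma says $z=\proj{z-w}{\R^m_+}$ if and only if $w^{\mathsf T}(u-z)\geq 0$ for all $u\in\R^m_+$, which is precisely (c).

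For part (ii), the symmetry and positive definiteness of $M$ make $\langle x,y\rangle^{M}=x^{\mathsf T}My$ and $\langle x,y\rangle^{M^{-1}}=x^{\mathsf T}M^{-1}y$ genuine inner products, and $M$ and $M^{-1}$ are both symmetric. To identify $w$, I would apply Lemma \ref{lemjq01} in the inner product $\langle\cdot,\cdot\rangle^{M^{-1}}$ with $\K=\R^m_+$ and $x=q$: it gives $w=\pi^{M^{-1}}_{\R^m_+}(q)$ exactly when $w\in\R^m_+$ and $\langle w-q,\,y-w\rangle^{M^{-1}}\geq 0$ for all $y\in\R^m_+$; but $w-q=Mz$, so $\langle w-q,\,y-w\rangle^{M^{-1}}=(Mz)^{\mathsf T}M^{-1}(y-w)=z^{\mathsf T}(y-w)=z^{\mathsf T}y\geq 0$, using $z^{\mathsf T}w=0$ and $z,y\geq 0$ from part (i). To identify $z$, I would apply Lemma \ref{lemjq01} in the inner product $\langle\cdot,\cdot\rangle^{M}$ with $x=-M^{-1}q$: since $z+M^{-1}q=M^{-1}(Mz+q)=M^{-1}w$, one gets $\langle z-(-M^{-1}q),\,y-z\rangle^{M}=(M^{-1}w)^{\mathsf T}M(y-z)=w^{\mathsf T}(y-z)\geq 0$ for all $y\in\R^m_+$ by part (i)(c), and $z\in\R^m_+$, so the lemma yields $z=\pi^{M}_{\R^m_+}(-M^{-1}q)$.

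The whole argument is essentially bookkeeping; there is no genuine obstacle, since these are standard facts about $\LCP(q,M)$. The only points requiring care are in part (ii): one must keep straight which weighted inner product is in force, use the symmetry of $M$ (and of $M^{-1}$) to slide the matrices across the transpose, and invoke the complementarity relation $z^{\mathsf T}w=0$ at the right moment to collapse the cross term $z^{\mathsf T}(y-w)$ to $z^{\mathsf T}y$. The substitutions $w-q=Mz$ and $z+M^{-1}q=M^{-1}w$ are the crux that makes the weighted projections appear.
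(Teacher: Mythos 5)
Your proposal is correct and follows essentially the same route as the paper's own argument: a componentwise $\min/\max$ computation for (a)$\iff$(b), Lemma \ref{lemjq01} for (b)$\iff$(c), and, for part (ii), the substitutions $w-q=Mz$ and $z+M^{-1}q=M^{-1}w$ combined with Lemma \ref{lemjq01} under the weighted inner products. The only cosmetic difference is that the paper invokes a symmetric counterpart of (c) (an inequality in $z$) for the projection of $q$, whereas you derive $z^{\mathsf T}(y-w)=z^{\mathsf T}y\geq 0$ directly from complementarity, which is the same content.
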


\def\skip1{
\begin{proof}  We first prove part (i). \hfill \break
[(a)$\iff$(b)]$\, $ For $i=1,\ldots,m$,
\[
z_i-(\proj{z-w}{\R^m_+})_i=z_i-\max(z_i-w_i,0)=\min(w_i,z_i).
\]
We thus see that
\begin{align*}
z=\proj{z-w}{\R^m_+} \
\iff\ \min(w_i,z_i)=0,\ i=1,\ldots,m
\iff\ z\geq 0,\ w\geq 0,\ z^{\mathsf T}w = 0
\end{align*}
as required. \hfill \break
[(b)$\iff$(c)]$\, $ This follows immediately from the property of projection in Lemma \ref{lemjq01}. \hfill \break
[(a)$\iff$(d)$\iff$(e)]$\, $  Noting the symmetry between $z$ and $w$, it suffices to repeat the previous arguments.

For part (ii), we observe that, by statements (c) and (e) in part (i), the pair $(z,w)$ satisfies $w=q+Mz$ and
\begin{align}\label{eqjp25}
\big(M(M^{-1}q+z)\big)^{\mathsf T}(y-z) \geq 0, \quad \big(M^{-1}(w-q)\big)^{\mathsf T}(y-w) \geq 0,\quad \forall\, y\in\R^m_+.
\end{align}
Since $M$ and $M^{-1}$ are positive-definite symmetric matrices, formula \eqref{eqjp22} follows immediately from
another application of Lemma \ref{lemjq01} to \eqref{eqjp25}.
\end{proof}

}

%%%%%%%%%%%%%%%%%%%%%%%%%%%%%%%%%%%%%%%%%%%%%%%%%%%%%%%%%%%%%%%%%%%%%%%%%%%%%%%%%%%%
\subsection{Subgame and Value} \label{subgg}
%%%%%%%%%%%%%%%%%%%%%%%%%%%%%%%%%%%%%%%%%%%%%%%%%%%%%%%%%%%%%%%%%%%%%%%%%%%%%%%%%%%%

As was already mentioned, in order for the game $\AG(X,P,G)$ to enjoy the WUC property, it is necessary for the off-diagonal terms
to have opposite signs to the diagonal terms in the respective columns. The following result analyzes in more detail the connections between $\Pmat$-matrices (or $\Kmat$-matrices) and the properties of the affine game $\AG(X,P,G)$.

\begin{theorem} \label{thmjj30}
Suppose $X,P\in\R^m$ are arbitrary vectors and $G\in\R^{m\times m}$ is a $\Pmat$-matrix. Then:  \hfill \break
(i) The affine game $\AG(X,P,G)$ has at least one Nash equilibrium and all Nash equilibria of $\AG(X,P,G)$ attain the same payoff $V^*$.   \hfill \break
(ii) The Nash equilibrium payoff $V^*$ satisfies
\begin{gather}\label{eqjq103}
\big(G^{-1}(V^*-P)\big)^{\mathsf T}(y-V^*)\geq 0, \quad \forall\,y\in\Oo (X),
\end{gather}
where $\Oo (X): = \{ x \in \R^m :\,  x\geq X \}$. If $G$ is a positive definite symmetric matrix, then
\begin{gather}\label{eqjq104}
V^*=\pi^{G^{-1}}_{\Oo (X)}(P)
\end{gather}
where the projection $\pi^{G^{-1}}_{\Oo (X)} : \R^m \to \Oo (X) $ is taken under the inner product $\langle x, y\rangle^{G^{-1}} := x^{\mathsf T} G^{-1} y$.  \hfill \break
(iii) If $G$ is a $\Kmat$-matrix, then the affine game $\AG(X,P,G)$ has the WUC property and the Nash equilibrium payoff $V^*$ is also the unique value of the game.
\end{theorem}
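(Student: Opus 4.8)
The plan is to deduce all three parts from the linear complementarity problem $\LCP(q,M)$ with $q:=P-X$ and $M:=G$, exploiting the sign patterns available once $G$ is a $\Pmat$- or $\Kmat$-matrix. (Note first that by \eqref{eqlm16} the vector $V(s)$ depends on $s$ only through the set $\Eset(s)$; I write $V(F)$ for its common value when $\Eset(s)=F$.) The first step is to make precise the correspondence foreshadowed before Section~\ref{secLCP}: a profile $s$ is a Nash equilibrium of $\AG(X,P,G)$ \emph{if and only if} the pair $\big(a(s),\,V(s)-X\big)$ solves $\LCP(q,M)$, where $a(s)$ is the vector from \eqref{eqjj01}. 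The identities $V(s)-X=q+G\,a(s)$, $\big(V(s)-X\big)_i=0$ for $i\in\Eset(s)$, $a_i(s)=0$ for $i\notin\Eset(s)$, and hence $a(s)^{\mathsf T}\big(V(s)-X\big)=0$, are immediate from Definition~\ref{defjj08} and Lemma~\ref{xlem1}; what must be checked is that the remaining inequalities $V(s)-X\ge 0$ and $a(s)\ge 0$ are exactly the two Nash conditions. For $i\notin\Eset(s)$, not deviating to exercise is optimal precisely when $V_i(s)\ge X_i$, i.e.\ $\big(V(s)-X\big)_i\ge 0$. For $i\in\Eset(s)$, set $\Eset':=\Eset(s)\setminus\{i\}$ and expand $G_{\Eset(s)\Eset(s)}$ in block form along $\Eset'$ and $\{i\}$; a one-step block elimination yields
\[
\big(G_{ii}-G_{i\Eset'}(G_{\Eset'\Eset'})^{-1}G_{\Eset' i}\big)\,a_i(s)=X_i-V_i(\Eset').
\]
Since $G$ is a $\Pmat$-matrix the Schur complement on the left equals $\det G_{\Eset(s)\Eset(s)}/\det G_{\Eset'\Eset'}>0$, so the Nash condition $V_i(\Eset')\le X_i$ for an exercising player $i$ is equivalent to $a_i(s)\ge 0$. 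Conversely, from a solution $(z,w)$ of $\LCP(q,M)$ one picks any set $\Eset$ with $\{i:z_i>0\}\subseteq\Eset\subseteq\{i:w_i=0\}$; the profile $s$ with $\Eset(s)=\Eset$ then satisfies $a(s)=z$, $V(s)=X+w$, and is Nash by the same computations. Part~(i) follows from Proposition~\ref{proplc03}: a $\Pmat$-matrix guarantees a unique solution $(z,w)$ of $\LCP(q,M)$ for every $q$, so $\AG(X,P,G)$ has a Nash equilibrium and every Nash equilibrium has the same payoff $V^*=X+w$.

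For part~(ii), observe that $G^{-1}(V^*-P)=a(\si)=:z\ge 0$ for any equilibrium $\si$, while $V^*-X=w\ge 0$ and $z^{\mathsf T}w=0$; hence for every $y\in\Oo(X)$,
\[
\big(G^{-1}(V^*-P)\big)^{\mathsf T}(y-V^*)=z^{\mathsf T}(y-X)-z^{\mathsf T}w=z^{\mathsf T}(y-X)\ge 0,
\]
which is \eqref{eqjq103}. If $G$ is symmetric positive definite, then so is $G^{-1}$, the set $\Oo(X)$ is closed and convex, $V^*\in\Oo(X)$, and \eqref{eqjq103} says exactly that $\langle V^*-P,\,y-V^*\rangle^{G^{-1}}\ge 0$ for all $y\in\Oo(X)$; Lemma~\ref{lemjq01} then gives \eqref{eqjq104}.

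Turning to part~(iii), since each $\Strat^k=\{0,1\}$ the only non-trivial WUC comparisons involve two profiles differing in a single coordinate $k$, that is, with exercising sets $F$ and $F\cup\{k\}$ where $k\notin F$. Writing $\gamma_k:=G_{kk}-G_{kF}(G_{FF})^{-1}G_{Fk}>0$, the identity of the first paragraph gives that the exercise coefficient of player $k$ in the profile with exercising set $F\cup\{k\}$ is $a_k=\gamma_k^{-1}(X_k-V_k(F))$, and a short computation of $V_l$ under both profiles yields, for $l\ne k$,
\[
V_l(F\cup\{k\})-V_l(F)=a_k\,\big(G_{lk}-G_{lF}(G_{FF})^{-1}G_{Fk}\big),
\]
the right-hand side vanishing when $l\in F$. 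When $G$ is a $\Kmat$-matrix, $G_{FF}$ is again a $\Kmat$-matrix, so $(G_{FF})^{-1}\ge 0$ (a standard $M$-matrix fact, see \cite{Fiedler}), and a sign chase using the non-positive off-diagonal entries of $G$ gives $G_{lk}-G_{lF}(G_{FF})^{-1}G_{Fk}\le 0$ for $l\notin F\cup\{k\}$. Since $V_k(F\cup\{k\})=X_k$, the sign of $V_k(F\cup\{k\})-V_k(F)$ equals that of $a_k$; comparing with the last display, a unilateral change of player $k$ that strictly raises her own payoff weakly lowers the payoff of every other player, and one that leaves her own payoff fixed leaves every other payoff fixed. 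This is precisely the WUC property.

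Finally, for the value: since each player has only two strategies, an elementary max--min computation shows that, for \emph{any} $G$,
\[
\sup_{s^k\in\Strat^k}\inf_{s^{-k}\in\Strat^{-k}}V_k(s^k,s^{-k})=\inf_{s^{-k}\in\Strat^{-k}}\sup_{s^k\in\Strat^k}V_k(s^k,s^{-k})=\max\big(X_k,\ \min_{F\subseteq\Mset\setminus\{k\}}V_k(F)\big),
\]
so the value exists; it remains to identify it with $V^*$. Let $\si$ be an equilibrium with exercising set $\Eset(\si)$. If $k\in\Eset(\si)$, then $V^*_k=X_k$, and the identity of the first paragraph with $a_k(\si)\ge 0$ gives $V_k(\Eset(\si)\setminus\{k\})\le X_k$, so the right-hand side above equals $X_k=V^*_k$. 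If $k\notin\Eset(\si)$, I claim $V_k(F)\ge V^*_k$ for \emph{every} $F\subseteq\Mset\setminus\{k\}$, with equality at $F=\Eset(\si)$: writing $V(F)-V^*=G\,d$ with $d$ supported on $F\cup\Eset(\si)$, partition $\Mset$ into $A:=F$, $B:=\Eset(\si)\setminus F$ and $C:=\Mset\setminus(F\cup\Eset(\si))$, the last of which contains $k$; then $(Gd)_A=-w_A\le 0$, $d_B=-a_B(\si)\le 0$ and $d_C=0$, so the $A$-block equation $G_{AA}\,d_A=-w_A-G_{AB}\,d_B$ together with $(G_{AA})^{-1}\ge 0$ and the non-positive off-diagonal blocks of $G$ forces $d_A\le 0$, whence $(Gd)_C\ge 0$ and in particular $V_k(F)\ge V^*_k$. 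Thus $\min_{F}V_k(F)=V^*_k\ge X_k$ and the displayed value again equals $V^*_k$. (Equivalently, these two computations show that $\si$ is an \emph{optimal} equilibrium, so the identification of the value with $V^*$ also follows from the remark following Definition~\ref{defaa02}.) The main obstacle is this last inequality $V_k(F)\ge V^*_k$: it is the optimal-equilibrium content of the theorem, it genuinely requires the $\Kmat$-structure — non-positive off-diagonal entries together with nonnegativity of the inverses of principal submatrices — rather than merely the $\Pmat$-structure, and the attendant Schur-complement sign bookkeeping must be handled carefully.
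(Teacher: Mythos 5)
Your proposal is correct. For parts (i) and (ii) you follow essentially the paper's own route: the same Nash-equilibrium/LCP correspondence, with your block-elimination identity $\big(G_{ii}-G_{i\Eset'}(G_{\Eset'\Eset'})^{-1}G_{\Eset' i}\big)a_i(s)=X_i-V_i(\Eset')$ being exactly the paper's Cramer's-rule ratio $\det (G_{\Eset\Eset})/\det (G_{\Eset'\Eset'})$, and part (ii) being the same translated-by-$X$ projection argument. Where you genuinely diverge is part (iii). The paper proves the WUC property by induction on the number of players, reducing to the subgame with matrix $\Gt$ (Lemma \ref{lemjj24}), checking that $\Gt$ remains a $\Kmat$-matrix (Lemma \ref{lemjj25}), and then obtains the value by invoking Kats and Thisse's theorem that in a WUC game all Nash equilibria are optimal equilibria. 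You instead (a) verify WUC in one shot from the formula $V_l(F\cup\{k\})-V_l(F)=a_k\big(G_{lk}-G_{lF}(G_{FF})^{-1}G_{Fk}\big)$ together with the standard $M$-matrix fact $(G_{FF})^{-1}\geq 0$ for $\Kmat$-matrices, and (b) prove directly that every Nash equilibrium is an optimal equilibrium: you first note that $\sup\inf=\inf\sup=\max\big(X_k,\min_F V_k(F)\big)$ holds for free because exercising gives player $k$ a payoff independent of the opponents, and then establish the key inequality $V_k(F)\geq V^*_k$ for a non-exercising player $k$ by the block-sign argument on $V(F)-V^*=Gd$; all of these steps check out. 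Your route is self-contained (no appeal to the Kats--Thisse result) and yields a bit more explicit information (an explicit max--min formula for the value and a direct optimal-equilibrium certificate for any Nash equilibrium), at the cost of the extra $M$-matrix input $(G_{FF})^{-1}\geq 0$; the paper's route is shorter given that the subgame lemmas are in place and that it delegates the last step to the WUC literature. Both arguments rest on the same structural facts, namely positivity of Schur complements for $\Pmat$-matrices and, in the $\Kmat$ case, sign control of the reduced matrices (your $(G_{FF})^{-1}\geq 0$ versus the paper's statement that Schur complements of $\Kmat$-matrices are again $\Kmat$).
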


Before establishing Theorem \ref{thmjj30}, we prove some auxiliary lemmas, which deal with the properties of subgames.

\begin{lemma}\label{lemjj24}
 Let $G$ be a real $m\times m$ matrix with positive diagonal entries and non-zero principal minors. If player $m$ exercises in the game $\AG(X,P,G)$, then the subgame amongst the players in $\Mset'=\Mset\setminus\{m\}$ is given by $\AG(X_{-m},V_{\Mset'}(\sigma),\Gt)$ where $\sigma\in\Strat$ corresponds to $\Eset(\sigma)=\{m\}$ and $\Gt$ is the $(m-1)\times(m-1)$ matrix defined by
\begin{gather}  \label{eqjj30}
\Gt_{ij}=G_{ij}-\frac{G_{im} G_{mj}}{G_{mm}} , \quad 1\leq i,j\leq m-1.
\end{gather}
In particular, if $s^m=0$, then $V_{\Mset'}(s)=\widetilde V(s^{-m})$ and $a_{\Mset'}(s)=\widetilde a(s^{-m})$ where $\widetilde V$ and $\widetilde a$ are the counterparts to $V$ and $a$ in the game $\AG(X_{-m},V_{\Mset'}(\sigma),\Gt)$.
\end{lemma}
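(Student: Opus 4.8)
The plan is to reduce everything to the defining system \eqref{eqjj01}, which by Lemma \ref{xlem1} has a unique solution $(V(s),a(s))$ for each strategy profile, and to propagate it through a block decomposition of $G$ along the splitting $\Mset'\mid\{m\}$. Fix $s^{-m}\in\Strat^{-m}$ and put $s=(s^{-m},0)$, so that $m\in\Eset(s)$ and $\Eset(s)=\Eset'\cup\{m\}$ with $\Eset':=\Eset(s^{-m})\subseteq\Mset'$. Decompose $V(s)$, $P$, $a(s)$ and $G$ into their $\Mset'$- and $\{m\}$-parts, the diagonal blocks of $G$ being $G_{\Mset'\Mset'}$ and the scalar $G_{mm}>0$.

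First I would use the relation $V(s)-P=G\,a(s)$ of \eqref{eqjj01} together with $V_m(s)=X_m$ (valid since $m\in\Eset(s)$): the last scalar row reads $X_m-P_m=G_{m\Mset'}a_{\Mset'}(s)+G_{mm}a_m(s)$, whence $a_m(s)=G_{mm}^{-1}\big(X_m-P_m-G_{m\Mset'}a_{\Mset'}(s)\big)$. Substituting this into the $\Mset'$-block rows gives
\[
V_{\Mset'}(s)=P_{\Mset'}+G_{\Mset'm}G_{mm}^{-1}(X_m-P_m)+\big(G_{\Mset'\Mset'}-G_{\Mset'm}G_{mm}^{-1}G_{m\Mset'}\big)a_{\Mset'}(s),
\]
and the matrix multiplying $a_{\Mset'}(s)$ is exactly $\Gt$ of \eqref{eqjj30}. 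The constant offset $P_{\Mset'}+G_{\Mset'm}G_{mm}^{-1}(X_m-P_m)$ equals $V_{\Mset'}(\sigma)$ by \eqref{eqlm16} applied to $\sigma$ with $\Eset(\sigma)=\{m\}$. Hence $V_{\Mset'}(s)-V_{\Mset'}(\sigma)=\Gt\,a_{\Mset'}(s)$.

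Next I would observe that the remaining two conditions of \eqref{eqjj01}, restricted to $\Mset'$, are precisely the defining conditions of $\AG(X_{-m},V_{\Mset'}(\sigma),\Gt)$ at the profile $s^{-m}$: the condition $V_i(s)=X_i$ for $i\in\Eset(s)$ yields $V_i(s)=X_i$ for $i\in\Eset'$, and $a_i(s)=0$ for $i\notin\Eset(s)$ yields $a_i(s)=0$ for $i\in\Mset'\setminus\Eset'$. Thus $(V_{\Mset'}(s),a_{\Mset'}(s))$ solves the system \eqref{eqjj01} for that subgame, and by the uniqueness furnished by Lemma \ref{xlem1} it must coincide with $(\widetilde V(s^{-m}),\widetilde a(s^{-m}))$. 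As $s^{-m}$ was arbitrary, this identifies the subgame among the players of $\Mset'$ with $\AG(X_{-m},V_{\Mset'}(\sigma),\Gt)$.

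The only step requiring a genuine, though standard, argument is that $\Gt$ is an admissible game matrix. Here I would use the Schur complement determinant identity: for every $S\subseteq\Mset'$ one has $\det\Gt_{SS}=\det G_{(S\cup\{m\})(S\cup\{m\})}/G_{mm}$, which is non-zero because $G$ has non-zero principal minors and $G_{mm}>0$; in particular $\Gt_{\Eset'\Eset'}$ is invertible, which is needed even to write the subgame payoff down. Positivity of the diagonal of $\Gt$ is not automatic, but it is immaterial, since rescaling the columns of a game matrix (equivalently, rescaling the components of $a$) leaves the payoff function \eqref{eqlm16} unchanged, as noted after \eqref{eqjj01}; one may therefore normalise $\Gt$ without altering the subgame. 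I expect the block bookkeeping in the substitution and this admissibility check to be the only places demanding care; the rest is a direct unwinding of \eqref{eqjj01}.
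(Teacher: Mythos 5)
Your proposal is correct and follows essentially the same route as the paper: split off the row of player $m$, solve $V_m(s)=X_m$ for $a_m(s)$, substitute into the $\Mset'$-rows to get $V_{\Mset'}(s)=V_{\Mset'}(\sigma)+\Gt\,a_{\Mset'}(s)$, and identify the result with the defining system of $\AG(X_{-m},V_{\Mset'}(\sigma),\Gt)$. Your extra admissibility check for $\Gt$ (the Schur-complement determinant identity and the remark that diagonal positivity can be restored by column rescaling) is a sensible elaboration of what the paper defers to Lemma \ref{lemjj25}(i) and the normalisation remark after \eqref{eqjj01}.
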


\begin{proof}
Recall that the payoff is given by $V(s)=P+Ga(s)$. If $m\in\Eset(s)$, then player $m$ exercises and $V_m(s)=X_m$, so that
the equality $X_m=P_m+G_{m \cdot} a(s)$ holds. After rearranging, we obtain
\[
a_m(s)=\frac{X_m-P_m -G_{m\Mset'} a_{\Mset'}(s)}{G_{mm}}
\]
where $\Mset'=\Mset\setminus\{m\}$. Consequently, we may represent the vector of payoffs for players from $\Mset'$ as follows
\begin{align}
V_{\Mset'}(s)&=P_{\Mset'}+G_{\Mset'm}a_m(s)+G_{\Mset'\Mset'} a_{\Mset'}(s)\nonumber\\
&=P_{\Mset'}+\frac{G_{\Mset'm}}{G_{mm}}(X_m-P_m) +\left(G_{\Mset'\Mset'}- \frac{G_{\Mset'm}G_{m\Mset'}}{G_{mm}}\right) a_{\Mset'}(s)\nonumber\\
&=V_{\Mset'}(\si)+\left(G_{\Mset'\Mset'}- \frac{G_{\Mset'm}G_{m\Mset'}}{G_{mm}}\right) a_{\Mset'}(s) \label{eqjj240}
\end{align}
where the strategy profile $\sigma\in\Strat$ corresponds to $\Eset(\sigma)=\{m\}$. Note that we have used here the fact that $V(\si)=P+ G_{\cdot m}(G_{mm})^{-1}(X_m-P_m)$ is the vector of payoffs if only player $m$ exercises.
All assertions can now be easily deduced from equation \eqref{eqjj240}.
\end{proof}

It was shown in Lemma \ref{lemjj24} that if player $m$ exercises, then the subgame between the remaining players is an affine game associated with the matrix $\Gt$. The next lemma demonstrates that the matrix $\Gt$ in fact retains the useful properties of $G$.

\begin{lemma}\label{lemjj25}
Let $G$ be a real $m\times m$ matrix such that $G_{mm}\neq 0$ and let the $(m-1)\times(m-1)$
matrix $\Gt$ be given by \eqref{eqjj30}. Then: \hfill \break
(i) If $G$ is a $\Pmat$-matrix, then $\Gt$ is a $\Pmat$-matrix. \hfill \break
(ii) If $G$ is a $\Zmat$-matrix and $G_{mm}> 0$, then $\Gt$ is a $\Zmat$-matrix. \hfill \break
(iii) If $G$ is a $\Kmat$-matrix, then $\Gt$ is a $\Kmat$-matrix.
\end{lemma}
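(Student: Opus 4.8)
The plan is to identify $\Gt$ as the Schur complement of the entry $G_{mm}$ in $G$, and then transfer the $\Pmat$- and $\Zmat$-properties across this operation. For an index set $\alpha\subseteq\{1,\dots,m-1\}$, write $G_{\alpha m}=(G_{im})_{i\in\alpha}$ for the corresponding column and $G_{m\alpha}=(G_{mj})_{j\in\alpha}$ for the corresponding row; then formula \eqref{eqjj30} says precisely that the principal submatrix $\Gt_{\alpha\alpha}$ equals $G_{\alpha\alpha}-G_{mm}^{-1}G_{\alpha m}G_{m\alpha}$, that is, the Schur complement of $G_{mm}$ in the principal submatrix $G_{\beta\beta}$ of $G$, where $\beta:=\alpha\cup\{m\}$. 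The first step is therefore to record the block-triangular factorization
\[
G_{\beta\beta}=\begin{pmatrix} I & G_{mm}^{-1}G_{\alpha m}\\ 0 & 1\end{pmatrix}\begin{pmatrix} \Gt_{\alpha\alpha} & 0\\ G_{m\alpha} & G_{mm}\end{pmatrix},
\]
which, on taking determinants of both (triangular) factors, yields the identity $\det G_{\beta\beta}=G_{mm}\,\det\Gt_{\alpha\alpha}$ for every $\alpha\subseteq\{1,\dots,m-1\}$.

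For part (i), I would fix an arbitrary nonempty $\alpha\subseteq\{1,\dots,m-1\}$ and rearrange the identity above to $\det\Gt_{\alpha\alpha}=\det G_{\beta\beta}/G_{mm}$. Since $G$ is a $\Pmat$-matrix, the principal minor $\det G_{\beta\beta}$ is positive and, being itself a $1\times1$ principal minor, so is $G_{mm}$; hence $\det\Gt_{\alpha\alpha}>0$. As $\alpha$ was arbitrary, every principal minor of $\Gt$ is positive, so $\Gt$ is a $\Pmat$-matrix.

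Part (ii) is a direct sign check on \eqref{eqjj30}: for $i\ne j$ with $i,j\le m-1$ we have $\Gt_{ij}=G_{ij}-G_{mm}^{-1}G_{im}G_{mj}$, where $G_{ij}\le 0$ by the $\Zmat$-property, while $G_{im}\le 0$ and $G_{mj}\le 0$ (both are off-diagonal entries of $G$, as $i,j\ne m$), so that $G_{im}G_{mj}\ge 0$; together with $G_{mm}>0$ this gives $\Gt_{ij}\le G_{ij}\le 0$, so $\Gt$ is a $\Zmat$-matrix. Part (iii) then follows at once: a $\Kmat$-matrix is a $\Pmat$-matrix — hence has positive diagonal, so in particular $G_{mm}>0$ and part (ii) applies — and is also a $\Zmat$-matrix, so by (i) and (ii) the matrix $\Gt$ is simultaneously a $\Pmat$-matrix and a $\Zmat$-matrix, that is, a $\Kmat$-matrix.

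I expect the only genuinely non-mechanical point to be the localized determinantal identity used in part (i); once the block factorization above is in hand — or, alternatively, once one invokes the standard quotient property of Schur complements — the remainder of the argument reduces to bookkeeping of signs and index sets, which should be routine.
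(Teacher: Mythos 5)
Your proposal is correct and follows essentially the same route as the paper: the paper establishes the identity $\det G_{\beta\beta}=G_{mm}\det \Gt_{\alpha\alpha}$ by column operations on an auxiliary block-triangular matrix, which is just the Schur-complement factorization you write down at the level of each principal submatrix, and parts (ii) and (iii) are the same sign check and combination. No gaps.
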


\begin{proof}
(i) We construct the $m\times m$ matrix $A$ by setting
$\displaystyle
A = \begin{pmatrix} \Gt & G_{\Mset' m} \\ 0 & G_{mm} \end{pmatrix}
$
or, more explicitly,
\begin{align*}
A = \begin{pmatrix}
G_{11}-\frac{G_{1m} G_{m1}}{G_{mm}}&\cdots&G_{1(m-1)}-\frac{G_{1 m} G_{m(m-1)}}{G_{mm}}&G_{1m} \\
\vdots& \ddots &\vdots&\vdots \\
G_{(m-1)1}-\frac{G_{(m-1)m} G_{m1}}{G_{mm}}&\cdots&G_{(m-1)(m-1)}-\frac{G_{(m-1) m} G_{m(m-1)}}{G_{mm}}&G_{(m-1) m}\\
0&\cdots&0& G_{mm} \end{pmatrix}.
\end{align*}
It is clear that $A$ can be obtained from $G$ via the column operations $A_{\cdot j}=G_{\cdot j}-\frac{G_{\cdot m} G_{mj}}{G_{mm}}$ for $j=1,\ldots,m-1$. Therefore, for any $\Eset\subseteq\Mset$ with $m\in\Eset$,
\[
\det \big( G_{\Eset\Eset}\big) = \det\big(A_{\Eset\Eset}\big)=G_{mm} \det \big( \Gt_{\Eset\setminus\{m\},\Eset\setminus\{m\}} \big).
\]
Since $G_{mm}>0$, the principal minors of $\Gt$ must all be positive, as required.
For part (ii), it suffices to observe that, for $i\neq j$ and $1\leq i,j \leq m-1$, we have that $\Gt_{ij}=G_{ij}-\frac{G_{im} G_{mj}}{G_{mm}}\leq G_{ij}\leq 0$. Finally, part (iii) is an immediate consequence of (i) and (ii).
\end{proof}

\begin{proof} [Proof of Theorem \ref{thmjj30}]
We first prove part (i). For any $s\in\Strat$, we may write $V(s)=P+G a(s)$ with $(V_i(s)-X_i)a_i(s)=0$ for all $i\in\Mset$.
We will now show that $s$ is a Nash equilibrium whenever $V_i(s)\geq X_i$ and $a_i(s)\geq 0$ for all $i\in\Mset$.

For any $i\notin\Eset(s)$, if player $i$ chooses to exercise instead, she should not be able to improve her  payoff if $s$ was a Nash equilibrium, which in turn means that $V_i(s)\geq X_i$. For any $i\in\Eset(s)$, we consider the case where player $i$ decides to not exercise; let the corresponding strategy profile be $s'$ so that $\Eset(s')=\Eset(s)\setminus\{i\}$. Then
\[
V_{\Eset(s)}(s)-V_{\Eset(s)}(s')=G_{\Eset(s)\Eset(s)}\big(a_{\Eset(s)}(s)-a_{\Eset(s)}(s')\big).
\]
On the left-hand side, for $j\in\Eset(s),\, j\neq i$, we have $V_{j}(s)-V_{j}(s')=X_j-X_j=0$. Therefore, if we solve for the $i$th component of $a_{\Eset(s)}(s)-a_{\Eset(s)}(s')$ using Cramer's rule, the expression can be easily simplified to
\[
a_{i}(s)-a_{i}(s') = \frac{(V_{i}(s)-V_{i}(s')) \det(G_{\Eset(s')\Eset(s')})}{\det(G_{\Eset(s)\Eset(s)})}.
\]
Recalling that $a_{i}(s')=0$ (because $i\notin\Eset(s')$), we obtain
\[
V_{i}(s)-V_{i}(s')  = \frac{ \det(G_{\Eset(s)\Eset(s)})}{\det(G_{\Eset(s')\Eset(s')})}\, a_{i}(s).
\]
Since $G$ has positive principal minors, we conclude that $V_{i}(s)\geq V_{i}(s')$ if and only if $a_{i}(s) \geq 0$.

 We thus see that a strategy profile $s$ is a Nash equilibrium if and only if $(z,w)=(a(s),V(s)-X)$ is a solution to $\LCP(P-X, G)$. By Proposition \ref{proplc03}, there is a unique solution pair $(z^*,w^*)$. Therefore, all Nash equilibria must attain the unique payoff value $V^*=V(s^*)=w^*+X$ and one such Nash equilibrium $s^*$ is given by:
$s^*_i=0$ if and only if $w^*_i=0$.

Part (ii) is a direct consequence of Proposition \ref{propjp20}. Indeed, it suffices to translate the problem by $X$ to obtain the required result.

Let us now prove part (iii). We will prove the WUC property by induction on the number of players. For two players, the WUC property
is easy to check. Consider the case of $m>2$ players. We will compare the strategy profiles $s$ and $s'$, where $k\in\Eset(s')$ and $\Eset(s)=\Eset(s')\setminus\{k\}$.

If $\Eset(s)=\emptyset$, then for all $l\neq k$
\[
V_l(s')-V_l(s)=\frac{G_{lk}}{G_{kk}} (X_k-P_k) = \frac{G_{lk}}{G_{kk}} \big( V_k(s')-V_k(s)\big).
\]
It is clear that the WUC condition holds since $G_{kk}>0\geq G_{lk}$.

If $|\Eset(s)|\geq 1$, by rearranging the player indices, we can assume, without loss of generality, that $m\in\Eset, m\neq k$. Then, by Lemma \ref{lemjj24}, the game $\AG(X,P,G)$ can be reduced to the subgame $\AG(X_{-m},V_{\Mset'}(\sigma),\Gt)$ over the set of player $\Mset'=\Mset\setminus\{m\}$, where $\Gt$ is the $(m-1)\times(m-1)$ matrix given by  \eqref{eqjj30}.
%\begin{gather}
%\Gt_{ij}=G_{ij}-\frac{G_{im} G_{mj}}{G_{mm}} ,\quad 1\leq i,j\leq m-1.
%\end{gather}
Lemma \ref{lemjj25} shows that $\Gt$ is also a $\Kmat$-matrix. Therefore, by the induction assumption, the subgame $\AG(X_{-m},V_{\Mset'}(\sigma),\Gt)$ is WUC. In particular, $V_l(s')-V_l(s)$ can be written as a negative multiple of $V_k(s')-V_k(s)$.

 We conclude that the game $\AG(X,P,G)$ is WUC. It was shown by Kats and Thisse \cite{Kats} that in a WUC game, all Nash equilibria are also optimal equilibria. Since all $\Kmat$-matrices are also $\Pmat$-matrices, by part (i), the game $\AG(X,P,G)$ must have an optimal equilibrium and hence a unique value.
\end{proof}

%%%%%%%%%%%%%%%%%%%%%%%%%%%%%%%%%%%%%%%%%%%%%%%%%%%%%%%%%%%%%%%%%%%%%%%%%%%%%%%%%%%%%%
%%%%%%%%%%%%%%%%%%%%%%%%%%%%%%%%%%%%%%%%%%%%%%%%%%%%%%%%%%%%%%%%%%%%%%%%%%%%%%%%%%%%%%
\section{Competitive Games with Singular Matrices} \label{sec3}
%%%%%%%%%%%%%%%%%%%%%%%%%%%%%%%%%%%%%%%%%%%%%%%%%%%%%%%%%%%%%%%%%%%%%%%%%%%%%%%%%%%%%%
%%%%%%%%%%%%%%%%%%%%%%%%%%%%%%%%%%%%%%%%%%%%%%%%%%%%%%%%%%%%%%%%%%%%%%%%%%%%%%%%%%%%%%

In the appendix, we analyze the connections between the affine game $\AG(X,P,D)$ and the general redistribution game $\GRG(X,P,\alpha)$ with $\sum_{i=1}^m \alpha_i< 1$, which was introduced in \cite{Guo1,Guo2}. Note, however, that Definition \ref{defjj08} cannot be applied to the zero-sum redistribution game $\ZRG(X,P,\alpha )$ examined in \cite{Guo1,Guo2}, since the corresponding matrix $\whD $ given by formula  \eqref{ceqjp02}  is singular when $\sum_{i=1}^m \alpha_i=1$. This motivates us to extend Definition \ref{defjj08} of an affine game to cover also the case of a singular matrix $G$.

\begin{definition} \label{defjr01} {\rm
Fix $X,P\in\R^m$ and let $G\in\R^{m\times m}$ be a matrix with positive diagonal and non-zero `proper' principal minors (so $\det(G)=0$ is allowed). An {\it $m$-player affine game} $\AG(X,P,G)$ is a single-period deterministic game in which each player $i$ can choose either to exercise ($s^i=0$) or not exercise ($s^i=1$). For any strategy profile $s=(s^1,\ldots,s^m)$, the payoff vector
$V(s)=(V_1(s), \dots , V_m(s))$ is given by, for every $k \in \Mset $,}
%\[
%V(s)=
%\begin{cases}
%P+G_{\cdot \Eset(s)} \big(G_{\Eset(s)\Eset(s)}\big)^{-1}(X_{\Eset(s)}-P_{\Eset(s)}), \quad &\Eset(s)\neq\Mset , \\
%X , &\Eset(s)=\Mset.
%\end{cases}
%\]
%where $\Eset(s)=\{i\in\Mset: s^i=0\}$ is the set of exercising players. Equivalently, we may represent the payoff function as follows: for any strategy profile $s$ and any $k \in \Mset $,
\[
V_i(s)=
\begin{cases}
X_i ,& i\in\Eset(s),\\
P_i+G_{i \Eset(s)} \big(G_{\Eset(s)\Eset(s)}\big)^{-1}(X_{\Eset(s)}-P_{\Eset(s)}), \quad &i\notin\Eset(s).
\end{cases}
\]
% so that the calculation of $\big(G_{\Eset(s)\Eset(s)}\big)^{-1}=G^{-1}$ is not required when $\Eset(s)=\Mset$.}
\end{definition}

If a square matrix $G$ is non-singular then Definition \ref{defjr01} is consistent with Definition \ref{defjj08}.
Therefore, all the results established so far and regarding the game $\AG(X,P,G)$ with a non-singular matrix $G$ still apply.
In this subsection, we will only focus on the case of a singular matrix $G$.

\begin{definition}\label{defjr02} {\rm
A square matrix $M$ is called a \emph{$\Pmot$-matrix} if it has non-negative determinant and positive proper principal minors. Furthermore $M$ is said to be a \emph{$\Kmot$-matrix} if it is a {$\Pmot$-matrix} as well as a {$\Zmat$-matrix}.}
\end{definition}

\begin{remark}{\rm
It is worth noting that the class $\Pmot$ of matrices is not identical to the well-known classes of $\Pmat_0$ and $\Pmat_1$, which are defined as follows \hfill \break % (see, \cite{xxx}): \hfill \break
(i) a $\Pmat_0$-matrix is one with non-negative principal minors, \hfill \break
(ii) a $\Pmat_1$-matrix is a $\Pmat_0$-matrix where exactly one of the principal minors is zero. \hfill \break
Indeed, it is clear that we have the proper inclusions $\Pmot \varsubsetneq \Pmat_1\cup\Pmat \varsubsetneq \Pmat_0$.}
\end{remark}

The following auxiliary result is borrowed from the monograph by Cottle et al. \cite{Cottle} (for part (i), see Theorem 3.4.4; for part (ii), see Theorem 4.1.13).

\begin{proposition} \label{propjr06}
Assume that $M$ is a $\Pmot$-matrix. Then: \hfill \break
(i) For a fixed $q\in\R^m$, if $\LCP(q,M)$ has at least one solution $(z,w)$, then all solutions of $\LCP(q,M)$ are unique in $w$. \hfill \break
(ii) If for some $q\in\R^m$, $\LCP(q,M)$ does not have a solution, then there exists a fixed $v\in\R^m$ satisfying
\[
v>0,\quad v^{\mathsf T} M = 0,
\]
such that $\LCP(q,M)$ has a solution if and only if $v^{\mathsf T}q\geq 0$.
\end{proposition}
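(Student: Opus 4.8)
The plan is to reduce everything to one algebraic fact: a $\Pmot$-matrix $M$ is \emph{column-adequate}, i.e.\ $x_i(Mx)_i\le 0$ for all $i$ forces $Mx=0$. First I would dispose of the non-singular case: if $\det M>0$ then $M$ is a $\Pmat$-matrix, so Proposition \ref{proplc03} gives a unique solution (hence a unique $w$), and the hypothesis of (ii) is never met. So assume $\det M=0$; since all proper principal minors are positive, some $(m{-}1)\times(m{-}1)$ minor is nonzero, so $\operatorname{rank}M=m-1$ and $\ker M,\ker M^{\mathsf T}$ are lines. Four standing facts are used repeatedly: (a) every proper principal submatrix of a $\Pmot$-matrix is a $\Pmat$-matrix, since its principal minors are proper principal minors of $M$; (b) by Schur's determinant identity together with (a), the Schur complement $M/M_{\delta\delta}$ of $M$ relative to a nonempty proper principal block is again a $\Pmot$-matrix of smaller size; (c) $M+\epsilon I$ is a $\Pmat$-matrix for every $\epsilon>0$, because all principal minors of $M$ are nonnegative; and (d) the Fiedler--Pt\'ak sign test \cite{Fiedler}: $N$ is a $\Pmat$-matrix iff for every $x\neq 0$ there is an index $i$ with $x_i(Nx)_i>0$.

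Column-adequacy is proven by induction on $m$. Let $x$ satisfy $x_i(Mx)_i\le 0$ for all $i$, and put $\beta=\operatorname{supp}(x)$. If $\beta\subsetneq\Mset$, then $(x_\beta)_i(M_{\beta\beta}x_\beta)_i=x_i(Mx)_i\le 0$ for $i\in\beta$ with $M_{\beta\beta}$ a $\Pmat$-matrix by (a), so (d) gives $x_\beta=0$ and $Mx=0$. If $\beta=\Mset$, put $\gamma=\operatorname{supp}(Mx)$; if $\gamma=\Mset$ then $x_i(Mx)_i<0$ for every $i$, and taking $\epsilon>0$ small enough that $x_i(Mx)_i+\epsilon x_i^2<0$ for all $i$ contradicts (c)--(d) applied to $x\neq 0$. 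Hence $\gamma\subsetneq\Mset$; if $\gamma=\emptyset$ we are done, so let $\delta:=\Mset\setminus\gamma$ be nonempty and proper. From $(Mx)_\delta=0$ one gets $x_\delta=-(M_{\delta\delta})^{-1}M_{\delta\gamma}x_\gamma$ and hence $(M/M_{\delta\delta})x_\gamma=(Mx)_\gamma$; since the inequality $x_i(Mx)_i\le 0$ persists on $\gamma$ and $M/M_{\delta\delta}$ is a smaller $\Pmot$-matrix by (b), the inductive hypothesis yields $(Mx)_\gamma=0$, contradicting $\gamma=\operatorname{supp}(Mx)$ unless $\gamma=\emptyset$. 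In all cases $Mx=0$. Part (i) follows at once: if $(z^1,w^1),(z^2,w^2)$ solve $\LCP(q,M)$ and $d=z^1-z^2$, complementarity and nonnegativity give $d_i(Md)_i=-z^1_iw^2_i-z^2_iw^1_i\le 0$, so $Md=0$, i.e.\ $w^1=w^2$.

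For (ii) the ``only if'' is immediate: a solution $(z,w)$ gives $v^{\mathsf T}w=v^{\mathsf T}q+(M^{\mathsf T}v)^{\mathsf T}z=v^{\mathsf T}q\ge 0$. For the rest, note $M^{\mathsf T}$ is also a $\Pmot$-matrix, so $M$ is row-adequate as well, hence \emph{adequate}; I would then invoke the classical fact for adequate (sufficient) matrices \cite{Cottle} that $M\in\mathbf{Q}_0$, i.e.\ $\LCP(q,M)$ is solvable as soon as the feasibility system $\{z\ge 0:\,q+Mz\ge 0\}$ is nonempty. By Farkas's lemma this system is empty iff there is $v\ge 0$ with $v^{\mathsf T}M\le 0$ and $v^{\mathsf T}q<0$; for such a $v$ one has $v_i(M^{\mathsf T}v)_i\le 0$ for all $i$, so row-adequacy forces $M^{\mathsf T}v=0$, and if $v_j=0$ for some $j$ then with $S=\operatorname{supp}(v)\subsetneq\Mset$ we would get $(v_S)^{\mathsf T}M_{SS}=0$ with $M_{SS}$ a nonsingular $\Pmat$-matrix, forcing $v_S=0$, a contradiction. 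Thus every such $v$ is strictly positive and, lying in the line $\ker M^{\mathsf T}$, is a nonnegative multiple of one fixed $v>0$ with $v^{\mathsf T}M=0$ (which exists exactly when some $q$ is infeasible, i.e.\ under the hypothesis of (ii)). Since moreover every certificate of infeasibility is such a multiple, $\LCP(q,M)$ is unsolvable iff $v^{\mathsf T}q<0$, which is the assertion.

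The induction for column-adequacy and the Farkas/submatrix bookkeeping in (ii) are routine; the main obstacle is the one structural input I would borrow rather than reprove, namely that adequate matrices lie in $\mathbf{Q}_0$ (``feasible $\Rightarrow$ solvable''). This is where genuine LCP machinery is needed, and it is cleanest to cite \cite{Cottle} (Theorems 3.4.4 and 4.1.13), precisely as the authors do; an alternative, less transparent route is to run Lemke's complementary pivot algorithm and analyze its secondary ray, the well-definedness and finiteness of the pivots again coming from fact (a).
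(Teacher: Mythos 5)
Your proposal is correct, but it does something the paper never does: in the text, Proposition \ref{propjr06} is not proved at all, it is simply imported from Cottle, Pang and Stone \cite{Cottle} (Theorems 3.4.4 and 4.1.13). You instead derive part (i) from scratch by showing that a $\Pmot$-matrix is column adequate, via an induction that combines three correct observations: proper principal submatrices are $\Pmat$-matrices, Schur complements with respect to proper principal blocks are again $\Pmot$ (this is the block version of the paper's own Lemma \ref{lemjj25}, justified by the Crabtree--Haynsworth quotient formula, which deserves one explicit line since plain ``Schur's determinant identity'' only covers the full minor), and the Fiedler--Pt\'ak sign-reversal test \cite{Fiedler} applied to $M_{\beta\beta}$ and to $M+\epsilon I$; the standard $d_i(Md)_i\le 0$ computation then gives $w$-uniqueness. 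For part (ii) your Farkas argument is also correct: any infeasibility certificate $v\ge 0$, $M^{\mathsf T}v\le 0$, $v^{\mathsf T}q<0$ is forced by row adequacy and the nonsingularity of proper principal blocks to be strictly positive and to span the one-dimensional $\ker M^{\mathsf T}$, which yields the stated ``solvable iff $v^{\mathsf T}q\ge 0$'' criterion once one knows that feasibility implies solvability; that last step (adequate, or row sufficient, matrices belong to $\mathbf{Q}_0$) is the single ingredient you still borrow from \cite{Cottle}, exactly as you acknowledge. The trade-off is clear: the paper's citation is the shortest route, while your argument exposes the mechanism ($\Pmot\Rightarrow$ adequacy $\Rightarrow$ $w$-uniqueness, and the positive kernel vector as the universal Farkas certificate) at the cost of one cited LCP theorem; minor polish points are the Schur-complement quotient identity mentioned above and the observation that your case $\emptyset\neq\beta\subsetneq\Mset$ is really shown to be vacuous (the sign test yields a contradiction forcing $x=0$), which is harmless but worth stating precisely.
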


The main result of this subsection is a counterpart of Theorem \ref{thmjj30}.

\begin{theorem} \label{thmjr10}
(i) If $G$ is a $\Pmot$-matrix, then $\AG(X,P,G)$ has a Nash equilibrium and a unique Nash equilibrium payoff.\hfill \break
(ii) If $G$ is a $\Kmot$-matrix, then $\AG(X,P,G)$ is WUC and has a unique value.
\end{theorem}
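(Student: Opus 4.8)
The plan is to mimic closely the proof of Theorem \ref{thmjj30}, replacing Proposition \ref{proplc03} on $\Pmat$-matrices by Proposition \ref{propjr06} on $\Pmot$-matrices. First I would establish the dictionary between Nash equilibria of $\AG(X,P,G)$ and solutions of an associated linear complementarity problem. Exactly as in the proof of part (i) of Theorem \ref{thmjj30}, for any strategy profile $s \in \Strat$ write $V(s) = P + Ga(s)$ with $(V_i(s)-X_i)a_i(s)=0$; the argument via Cramer's rule there used only that the \emph{proper} principal minors $\det(G_{\Eset(s)\Eset(s)})$ (for $\Eset(s)\subsetneq\Mset$) and $\det(G_{\Eset(s')\Eset(s')})$ are positive, which is precisely what a $\Pmot$-matrix guarantees. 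Hence $s$ is a Nash equilibrium if and only if $(z,w) := (a(s), V(s)-X)$ solves $\LCP(P-X, G)$, with the identification $s_i = 0 \iff i \in \Eset(s)$.

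Next I would prove existence of a Nash equilibrium and uniqueness of the payoff. The delicate point — and the part I expect to be the main obstacle — is that for a singular $\Pmot$-matrix $G$ the problem $\LCP(P-X,G)$ need not have a solution for every right-hand side, so one cannot simply invoke an unconditional existence theorem as in the nonsingular case. I would argue by contradiction: suppose $\LCP(P-X,G)$ has no solution. By Proposition \ref{propjr06}(ii) there is a vector $v>0$ with $v^{\mathsf T}G = 0$ such that solvability is equivalent to $v^{\mathsf T}(P-X)\ge 0$; so $v^{\mathsf T}(P-X)<0$. I then want to exhibit a Nash equilibrium directly by a different route, for instance the profile $s$ with $\Eset(s) = \Mset$ (all players exercise), giving $V(s) = X$; one checks this is a Nash equilibrium precisely when no player can profitably deviate to ``continue'', and the deviation payoffs are governed by the proper submatrices of $G$ — one should show that $v^{\mathsf T}(P-X)<0$ forces these deviations to be non-improving. (Alternatively, handle the degenerate direction $v$ separately: project the data onto the complement of $v$, apply the nonsingular theory there, and show the $v$-component causes no profitable deviation.) Once a Nash equilibrium exists, Proposition \ref{propjr06}(i) gives that all solutions of $\LCP(P-X,G)$ share the same $w$, hence all Nash equilibria of $\AG(X,P,G)$ attain the same payoff $V^* = X + w$, proving part (i).

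For part (ii), assume in addition that $G$ is a $\Zmat$-matrix, i.e.\ a $\Kmot$-matrix. I would establish the WUC property by induction on $m$, following the template of the proof of part (iii) of Theorem \ref{thmjj30}: the base case $m=2$ is direct; for $m>2$, compare profiles $s,s'$ with $k\in\Eset(s')$ and $\Eset(s)=\Eset(s')\setminus\{k\}$, and if $\Eset(s)=\emptyset$ use $V_l(s')-V_l(s) = (G_{lk}/G_{kk})(V_k(s')-V_k(s))$ with $G_{kk}>0\ge G_{lk}$. If $|\Eset(s)|\ge 1$, pick $m\in\Eset(s)$, $m\ne k$, and reduce to the subgame on $\Mset\setminus\{m\}$ governed by $\Gt$ as in Lemma \ref{lemjj24}; here I need the analogue of Lemma \ref{lemjj25} for $\Kmot$-matrices, namely that the Schur complement $\Gt$ of a $\Kmot$-matrix is again a $\Kmot$-matrix — the $\Zmat$ part is identical to Lemma \ref{lemjj25}(ii), and for the minors one uses the same determinant identity $\det(G_{\Eset\Eset}) = G_{mm}\det(\Gt_{\Eset\setminus\{m\},\Eset\setminus\{m\}})$, noting that a zero \emph{full} minor of $G$ corresponds to a zero full minor of $\Gt$, so $\Gt$ inherits ``non-negative determinant, positive proper principal minors''. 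Applying the induction hypothesis to the WUC subgame $\AG(X_{-m}, V_{\Mset'}(\sigma), \Gt)$ then shows $V_l(s')-V_l(s)$ is a nonpositive multiple of $V_k(s')-V_k(s)$, giving WUC for $\AG(X,P,G)$. Finally, by Kats and Thisse \cite{Kats} every Nash equilibrium of a WUC game is an optimal equilibrium, and since a $\Kmot$-matrix is a $\Pmot$-matrix, part (i) supplies such an equilibrium; an optimal equilibrium yields the value, which by uniqueness of the Nash payoff equals $V^*$ and is unique.
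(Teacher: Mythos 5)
Your proposal is correct and follows essentially the same route as the paper: the LCP dictionary together with Proposition \ref{propjr06}, a dichotomy on solvability of $\LCP(P-X,G)$ with the all-exercise profile serving as the Nash equilibrium in the unsolvable case, and part (ii) by the same Schur-complement induction after checking that $\Gt$ remains a $\Kmot$-matrix. The one step you defer --- that $v^{\mathsf T}(P-X)<0$ makes deviations from the all-exercise profile non-improving --- is closed in the paper by a two-line computation: for the deviation $s'=(s^{-i},1)$ one has $V(s')=P+Ga(s')$ with $V_j(s')=X_j$ for $j\neq i$, so $v^{\mathsf T}(P-X)=v^{\mathsf T}(V(s')-Ga(s')-X)=v^{\mathsf T}(V(s')-X)=v_i\big(V_i(s')-X_i\big)<0$, whence $V_i(s')<X_i$.
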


\begin{proof}
(i) Let us first consider the case where $\LCP(P-X,G)$ has at least one solution. By Proposition \ref{propjr06}(i), there exists a solution pair $(z,w)$ where $w$ is unique. The same argument as in Theorem \ref{thmjj30} (used for $\Pmat$-matrices) can now be used to show that $w$ is the unique Nash equilibrium payoff.

Assume now that $\LCP(P-X,G)$ does not have a solution. Then, by Proposition \ref{propjr06}(ii), there exists $v>0$ such that $v^{\mathsf T}G=0$ and $v^{\mathsf T}(P-X)<0$. Consider the strategy profile $s$ corresponding to $\Eset(s)=\Mset$. We will show that $s$ is in fact a Nash equilibrium.

To this end, it suffices to show that $X_i \geq V_i(s')$ for all $i\in\Mset$ where $s'=(s^{-i},1)$. Keep in mind that for $j\neq i$, we have $V_j(s')=X_j$. By Definition \ref{defjr01}, since $\Eset(s')=\Mset\setminus\{i\}\neq\Mset$, we may write
$V(s')=P+Ga(s')$ for some $a(s')$. Now
\[
v^{\mathsf T}(P-X)=v^{\mathsf T}(V(s')-G a(s')-X)=v^{\mathsf T}(V(s')-X)=v_i(V_i(s')-X_i).
\]
Since $v_i>0$ and $v^{\mathsf T}(P-X)<0$, we must have $X_i > V_i(s')$, as required.

\noindent (ii) The same argument from Theorem \ref{thmjj30}(iii) for $\Kmat$-matrices can be applied to $\Kmot$-matrices.
\end{proof}

\subsection{Coalition Value in a Zero-Sum Game} \label{zsec1e}

So far, the definition of the value referred to each individual player. We will now consider
 the case where a subset of players $\Aset\subseteq\Mset$ is playing as a \emph{coalition}, using the \emph{collective payoff} $V_\Aset(s)=\sum_{i\in\Aset} V_i(s)$. A natural way to define the value of the game for the coalition $\Aset$ is to set
\begin{equation} \label{newva}
V^{*}_{\Aset} := \sup_{s^\Aset\in\Strat^\Aset}\inf_{s^{-\Aset}\in\Strat^{-\Aset}} V_\Aset(s^\Aset, s^{-\Aset})=\inf_{s^{-\Aset}\in\Strat^{-\Aset}} \sup_{s^\Aset\in\Strat^\Aset} V_\Aset(s^\Aset, s^{-\Aset}),
\end{equation}
assuming, of course, that the second equality above holds. In general, the value does not necessarily satisfy the {\it additivity property} $V^{*}_{\Aset}=\sum_{i\in\Aset} V^{*}_{i}$. In the following preliminary result, we consider an arbitrary $m$-person
\emph{zero-sum game}, that is, a game such that $V_\Mset(s)=0$ for all $s\in\Strat$.

\begin{proposition} \label{propab01a1}
Suppose the game is zero-sum and has an optimal equilibrium $\si\in\Strat$ with the value $V^*=V(\si)$.
Then for any subset $\Aset\subseteq\Mset$, the following equality holds
\[
V_\Aset(\si) :=\sum_{i\in\Aset} V^{*}_{i}=V^{*}_{\Aset}%= \sup_{s^\Aset\in\Strat^\Aset}\inf_{s^{-\Aset}\in\Strat^{-\Aset}} V_\Aset(s^\Aset, s^{-\Aset})=\inf_{s^{-\Aset}\in\Strat^{-\Aset}} \sup_{s^\Aset\in\Strat^\Aset} V_\Aset(s^\Aset, s^{-\Aset})
.
\]
\end{proposition}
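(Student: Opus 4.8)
The plan is to exploit the optimal equilibrium $\si$ and the zero-sum structure to squeeze the coalitional value $V^*_\Aset$ between $V_\Aset(\si)$ from both sides. Recall from the Remark following Definition \ref{defaa02} that an optimal equilibrium satisfies, for every $k\in\Mset$,
\[
V_k(\si^k, s^{-k})\geq V_k(\si^k, \si^{-k}) \geq V_k(s^k, \si^{-k}),
\quad \forall\, s^k\in\Strat^k,\ \forall\, s^{-k}\in\Strat^{-k},
\]
and that, being an optimal equilibrium, $V_k(\si)=V^*_k$ for every $k$, so that in particular $\sum_{i\in\Aset}V^*_i = V_\Aset(\si)$, which is the left equality in the claim by definition. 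It remains to prove $V_\Aset(\si)=V^*_\Aset$.

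First I would establish the lower bound $V^*_\Aset\geq V_\Aset(\si)$. Take the outer $\sup$ in \eqref{newva} at the particular choice $s^\Aset=\si^\Aset$; then
\[
V^*_\Aset \geq \inf_{s^{-\Aset}\in\Strat^{-\Aset}} V_\Aset(\si^\Aset, s^{-\Aset})
= \inf_{s^{-\Aset}} \sum_{i\in\Aset} V_i(\si^\Aset, s^{-\Aset}).
\]
The key step is to pass the infimum inside the finite sum and use the optimal-equilibrium inequality componentwise: for each fixed $i\in\Aset$, writing $\si^i$ as player $i$'s component and grouping the remaining $m-1$ coordinates (which for $s^{-\Aset}$ range over a subset of $\Strat^{-i}$, with the $\Aset\setminus\{i\}$ coordinates pinned to $\si$), the inequality $V_i(\si^i, s^{-i})\geq V_i(\si)$ gives $V_i(\si^\Aset, s^{-\Aset}) \geq V_i(\si)$ for every $s^{-\Aset}$. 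Summing over $i\in\Aset$ yields $V_\Aset(\si^\Aset, s^{-\Aset}) \geq V_\Aset(\si)$ for all $s^{-\Aset}$, hence the infimum is $\geq V_\Aset(\si)$, and therefore $V^*_\Aset\geq V_\Aset(\si)$.

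For the reverse inequality I would use the zero-sum identity to convert a statement about $\Aset$ into one about its complement $\Mset\setminus\Aset$: since $V_\Mset(s)=0$ for all $s$, we have $V_\Aset(s) = -V_{\Mset\setminus\Aset}(s)$ identically, so
\[
V^*_\Aset = \inf_{s^{-\Aset}}\sup_{s^\Aset} V_\Aset(s^\Aset, s^{-\Aset})
= \inf_{s^{-\Aset}}\sup_{s^\Aset}\bigl(-V_{\Mset\setminus\Aset}(s^\Aset, s^{-\Aset})\bigr)
= -\sup_{s^{-\Aset}}\inf_{s^\Aset} V_{\Mset\setminus\Aset}(s^\Aset, s^{-\Aset}).
\]
Now apply the lower-bound argument of the previous paragraph with the roles of $\Aset$ and its complement interchanged: choosing $s^{-\Aset}=\si^{-\Aset}$ in the outer $\sup$ and using the other optimal-equilibrium inequality $V_i(s^i,\si^{-i})\leq V_i(\si)$ for each $i\in\Mset\setminus\Aset$ gives $\sup_{s^{-\Aset}}\inf_{s^\Aset} V_{\Mset\setminus\Aset}(s^\Aset,s^{-\Aset}) \geq V_{\Mset\setminus\Aset}(\si) = -V_\Aset(\si)$, whence $V^*_\Aset \leq V_\Aset(\si)$. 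Combining the two bounds gives $V^*_\Aset = V_\Aset(\si) = \sum_{i\in\Aset}V^*_i$, and in particular the two expressions in \eqref{newva} agree (each equals $V_\Aset(\si)$), which also justifies the assumed equality there.

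The only mildly delicate point — the "main obstacle," though it is routine — is bookkeeping of the strategy-set decompositions: when $i\in\Aset$, a profile of the form $(\si^\Aset, s^{-\Aset})$ must be recognized as a profile of the form $(\si^i, s^{-i})$ with $s^{-i}$ having its $\Aset\setminus\{i\}$ coordinates equal to those of $\si$, so that the single-player optimal-equilibrium inequality applies verbatim. Once this identification is made, everything reduces to summing finitely many scalar inequalities and one application of the zero-sum identity.
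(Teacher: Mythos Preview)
Your argument is correct and follows essentially the same route as the paper: bound $\sup\inf$ from below by plugging in $\si^\Aset$ and summing the optimal-equilibrium inequalities over $i\in\Aset$, then bound $\inf\sup$ from above by applying the symmetric argument to the complement and invoking the zero-sum identity, and finally sandwich with $\inf\sup\geq\sup\inf$. One slip: in the second step, for $i\in\Mset\setminus\Aset$ the profile $(s^\Aset,\si^{-\Aset})$ has player $i$ playing $\si^i$, so you need the \emph{same} optimal-equilibrium inequality $V_i(\si^i,s^{-i})\geq V_i(\si)$ as before, not the Nash inequality $V_i(s^i,\si^{-i})\leq V_i(\si)$ that you cite.
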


\begin{proof}
Since $\si$ is an optimal equilibrium, each player $i\in\Mset$ can guarantee the payoff $V_i(\si)$; in other words,
$V_i(\si)=\inf_{s^{-i}\in\Strat^{-i}} V_i(\si^i, s^{-i})$. Hence the players from $\Aset$ can ensure $V_\Aset(\si)$ by collectively choosing  $\si^\Aset$, since
\[
V_\Aset(\si) = \sum_{i\in\Aset} \inf_{s^{-i}\in\Strat^{-i}} V_i(\si^i, s^{-i}) \leq \sum_{i\in\Aset} V_i(\si^\Aset, s^{-\Aset}) =  V_\Aset(\si^\Aset, s^{-\Aset}), \quad \forall\,s^{-\Aset}\in\Strat^{-\Aset}.
\]
Consequently,
\begin{gather}\label{eqab01a2}
V_\Aset(\si) \leq\inf_{s^{-\Aset}\in\Strat^{-\Aset}} V_\Aset(\si^\Aset, s^{-\Aset}) \leq \sup_{s^\Aset\in\Strat^\Aset}\inf_{s^{-\Aset}\in\Strat^{-\Aset}} V_\Aset(s^\Aset, s^{-\Aset}).
\end{gather}
If we apply the same argument to the player set $-\Aset$, we see that they can also guarantee their payoff $V_{-\Aset}(\si)$,
\[
V_{-\Aset}(\si) \leq V_{-\Aset}(s^\Aset, \si^{-\Aset}), \quad \forall\,s^{\Aset}\in\Strat^{\Aset}.
\]

Using the zero-sum condition, we obtain
\begin{gather}\label{eqab01a4}
V_\Aset(\si) = -V_{-\Aset}(\si)  \geq -V_{-\Aset}(s^\Aset, \si^{-\Aset}) = V_{\Aset}(s^\Aset, \si^{-\Aset}), \quad \forall\,s^{\Aset}\in\Strat^{\Aset},
\end{gather}
and thus
\begin{gather}\label{eqab01a3}
V_\Aset(\si) \geq \sup_{s^\Aset\in\Strat^\Aset} V_{\Aset}(s^\Aset, \si^{-\Aset}) \geq \inf_{s^{-\Aset}\in\Strat^{-\Aset}} \sup_{s^\Aset\in\Strat^\Aset} V_\Aset(s^\Aset, s^{-\Aset}).
\end{gather}
By combining \eqref{eqab01a2} and \eqref{eqab01a3} with the well-known inequality
\[
\inf_{s^{-\Aset}\in\Strat^{-\Aset}} \sup_{s^\Aset\in\Strat^\Aset} V_\Aset(s^\Aset, s^{-\Aset}) \geq \sup_{s^\Aset\in\Strat^\Aset}\inf_{s^{-\Aset}\in\Strat^{-\Aset}} V_\Aset(s^\Aset, s^{-\Aset}),
\]
we obtain the desired equality $V_\Aset(\si) = V^{*}_{\Aset} $.
\end{proof}

\subsection{Affine Games with Additive Values} \label{ysec1e}

In Theorems \ref{thmjj30} and \ref{thmjr10}, we established the existence of the value for
for each individual player in the game $\AG(X,P,G)$ where $G$ is a $\Kmot$-matrix. As in the preceding
subsection, we suppose that a given subset of players $\Aset\subseteq\Mset$ is playing as a coalition having in view the aggregated payoff function $V_\Aset(s)=\sum_{i\in\Aset} V_i(s)$. Proposition \ref{propab01a1} showed that in a zero-sum game with optimal equilibria, the value $V^{*}_{\Aset}$ (in the sense of formula \eqref{newva}) exists and satisfies the additive property $V^{*}_{\Aset}=\sum_{i\in\Aset} V^{*}_i(s)$. Although affine games are not necessarily zero-sum, we may introduce a dummy player to create a zero-sum extended game. Then we may apply Proposition \ref{propab01a1}, in order to show that the additivity property also holds for certain subclass of affine games.

\begin{theorem} \label{thmjt01}
Consider the affine game $\AG(X,P,G)$ where $X, P\in\R^m$ and $G$ is a $\Kmot$-matrix. Suppose that the column sums of $G$ are non-negative, that is,
\begin{gather} \label{eqjt01}
\sum_{i\in \Mset } G_{ij} \geq 0,\quad \forall\,j\in\Mset.
\end{gather}
Then for any $\Aset\subseteq\Mset$, the value $V^{*}_{\Aset}$ exists and is additive, in the sense that $V^{*}_{\Aset}
= \sum_{i\in\Aset} V^{*}_{i}$.
\end{theorem}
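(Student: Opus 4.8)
The plan is to reduce the statement to the zero-sum situation already handled in Proposition \ref{propab01a1}. To this end I would adjoin a \emph{dummy player}, labelled $m+1$, having no genuine choice (its strategy set is the singleton ``continue''), with payoff declared to be $V_{m+1}(s):=-\sum_{i\in\Mset}V_i(s)$, while the payoffs of players $1,\dots,m$ are inherited unchanged from $\AG(X,P,G)$. The resulting $(m{+}1)$-player game $\Game'$ is manifestly zero-sum. Since player $m+1$ has only one strategy, adjoining it does not enlarge the opponent set of any $k\leq m$ nor affect $V_\Aset$ for $\Aset\subseteq\Mset$; hence for every $\Aset\subseteq\Mset$ the individual values $V_i^*$ and the two iterated extrema $\sup_{s^\Aset}\inf_{s^{-\Aset}}V_\Aset$, $\inf_{s^{-\Aset}}\sup_{s^\Aset}V_\Aset$ computed in $\Game'$ coincide with those computed in $\AG(X,P,G)$. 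It therefore suffices to produce an optimal equilibrium of $\Game'$ and to invoke Proposition \ref{propab01a1}.

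By Theorem \ref{thmjr10}(ii) the affine game $\AG(X,P,G)$ admits an optimal equilibrium $\sigma$; let $\hat\sigma$ be the profile in $\Game'$ given by $\sigma$ together with the dummy's unique strategy. It is immediate that $\hat\sigma$ satisfies the Nash property in $\Game'$ and that it satisfies the optimality condition \eqref{eqaa021} for every $k\leq m$. The only substantive point is the optimality condition for the dummy, which reads $-\sum_{i\in\Mset}V_i(\sigma)=\inf_{s\in\Strat}\bigl(-\sum_{i\in\Mset}V_i(s)\bigr)$, that is,
\begin{equation}\label{eqkeylem}
\sum_{i\in\Mset}V_i(\sigma)=\max_{s\in\Strat}\sum_{i\in\Mset}V_i(s).
\end{equation}
Thus the whole theorem comes down to proving \eqref{eqkeylem}, and this is exactly where hypothesis \eqref{eqjt01} is used.

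To establish \eqref{eqkeylem} I would argue by induction on $m$, using the subgame reduction of Lemmas \ref{lemjj24} and \ref{lemjj25}. Writing $V(s)=P+Ga(s)$ and letting $s_\Eset$ be the profile with $\Eset(s_\Eset)=\Eset$, one has $\sum_{i\in\Mset}V_i(s_\Eset)=\mathbf 1^{\mathsf T}P+\bigl(\mathbf 1^{\mathsf T}G_{\cdot\Eset}\bigr)\bigl(G_{\Eset\Eset}\bigr)^{-1}(X_\Eset-P_\Eset)$, where $\mathbf 1=(1,\dots,1)^{\mathsf T}$ and, by \eqref{eqjt01}, the entries of $\mathbf 1^{\mathsf T}G$ are non-negative. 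If $\Eset(\sigma)=\emptyset$ then $V(\sigma)=P\geq X$, so $X_\Eset-P_\Eset\leq 0$; since $G_{\Eset\Eset}$ is a proper principal submatrix of the $\Kmot$-matrix $G$ it is a $\Kmat$-matrix and hence has an entrywise non-negative inverse, so $\sum_iV_i(s_\Eset)\leq\mathbf 1^{\mathsf T}P$ for all $\Eset$, which is \eqref{eqkeylem}. If $\Eset(\sigma)=\Mset$ the right side of \eqref{eqkeylem} is $\mathbf 1^{\mathsf T}X$, while $\sum_iV_i(\sigma)\geq\mathbf 1^{\mathsf T}X$ because $V_i(\sigma)\geq X_i$ for every $i$ at a Nash equilibrium; together with the trivial reverse inequality this gives \eqref{eqkeylem}. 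In the remaining case choose $m\in\Eset(\sigma)$; by Lemma \ref{lemjj24} the play after player $m$ exercises is the affine game $\AG(X_{-m},V_{\Mset\setminus\{m\}}(\tau),\Gt)$, and by Lemma \ref{lemjj25} together with the identity $\sum_{i\neq m}\Gt_{ij}=\sum_{i\in\Mset}G_{ij}-G_{mj}G_{mm}^{-1}\sum_{i\in\Mset}G_{im}\ge\sum_{i\in\Mset}G_{ij}$ (valid since $G_{mj}\leq 0$, $G_{mm}>0$ and the column sums of $G$ are non-negative) the matrix $\Gt$ of \eqref{eqjj30} is again a $\Kmot$-matrix with non-negative column sums. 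The restriction $\sigma^{-m}$ is a Nash, hence optimal, equilibrium of this WUC subgame, so the induction hypothesis applied to it yields $\sum_{i\in\Mset}V_i(\sigma)=\max_{\Eset\ni m}\sum_{i\in\Mset}V_i(s_\Eset)$; it then remains to verify that $\sum_{i\in\Mset}V_i(s_\Eset)\leq\sum_{i\in\Mset}V_i(s_{\Eset\cup\{m\}})$ whenever $m\in\Eset(\sigma)$ and $m\notin\Eset$, which reduces to a Cramer's-rule sign computation once more relying on the $\Zmat$-structure of $G$ and on \eqref{eqjt01}.

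The hard part of this argument is the inductive step of \eqref{eqkeylem}, and in particular the monotonicity claim $\sum_iV_i(s_\Eset)\leq\sum_iV_i(s_{\Eset\cup\{m\}})$ for $m\in\Eset(\sigma)$, $m\notin\Eset$, together with the (routine but to be checked) fact that $\sigma$ restricts to an optimal equilibrium of the reduced affine game. Once \eqref{eqkeylem} is available, the proof concludes at once: $\hat\sigma$ is an optimal equilibrium of the zero-sum game $\Game'$, so Proposition \ref{propab01a1} gives $\sum_{i\in\Aset}V_i^*=V_\Aset^*$ in $\Game'$ for every $\Aset\subseteq\Mset$, and, since none of the quantities involved is altered by passing between $\Game'$ and $\AG(X,P,G)$, the same identity holds for $\AG(X,P,G)$, which, recalling that $V_i(\sigma)=V_i^*$ for an optimal equilibrium, is the assertion $V^*_{\Aset}=\sum_{i\in\Aset}V^*_{i}$.
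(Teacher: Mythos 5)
Your overall strategy coincides with the paper's: adjoin a dummy player, reduce to Proposition \ref{propab01a1}, and observe that everything hinges on the dummy's optimality condition, i.e.\ on showing that at an optimal equilibrium $\sigma$ of $\AG(X,P,G)$ one has $\sum_{i\in\Mset}V_i(\sigma)=\max_{s\in\Strat}\sum_{i\in\Mset}V_i(s)$. However, your proposed proof of this identity has a genuine gap, and its central intermediate claim is false. The asserted monotonicity $\sum_{i\in\Mset}V_i(s_\Eset)\leq\sum_{i\in\Mset}V_i(s_{\Eset\cup\{m\}})$ for $m\in\Eset(\sigma)$, $m\notin\Eset$, fails in general: take $m=2$ players, $G=\bigl(\begin{smallmatrix}1&-1/2\\-1/2&1\end{smallmatrix}\bigr)$ (a $\Kmat$-matrix with column sums $1/2\geq 0$), $X=(0,0)$, $P=(10,0)$. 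The profile with $\Eset(\sigma)=\{2\}$ is a Nash, hence (WUC) optimal, equilibrium, yet $\sum_{i}V_i(s_{\{1\}})=0+5=5$ while $\sum_{i}V_i(s_{\{1,2\}})=X_1+X_2=0$. (The identity you are after does hold in this example --- the maximum $10$ is attained at $\Eset=\emptyset$ and at $\Eset(\sigma)$ --- but your route through pointwise monotonicity is blocked, and you offer no other argument.) In addition, the case $\Eset(\sigma)=\Mset$ is not proved: you simply assert that the maximal aggregate payoff equals $\mathbf{1}^{\mathsf T}X$, which is precisely what has to be shown there; and the case $\Eset(\sigma)=\emptyset$ quietly uses the entrywise non-negativity of $\bigl(G_{\Eset\Eset}\bigr)^{-1}$ for $\Kmat$-matrices, a standard fact but one not established in the paper.

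The paper closes exactly this gap without any maximization computation, by making the dummy part of an affine game. Append to $G$ a zero column (the dummy cannot exercise) and the row $\bigl(-\sum_{i=1}^mG_{i1},\ldots,-\sum_{i=1}^mG_{im}\bigr)$, which is non-positive precisely by hypothesis \eqref{eqjt01}, and set $\wt X_0=-\sum_{i\in\Mset}X_i$, $\wt P_0=-\sum_{i\in\Mset}P_i$. The extended game $\AG(\wt X,\wt P,\wt G)$ is zero-sum, restricts to $\AG(X,P,G)$ on players $1,\ldots,m$, and --- because all the new entries are non-positive --- the induction argument of Theorem \ref{thmjj30}(iii) shows it is WUC. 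By the Kats--Thisse result, the Nash equilibrium consisting of an optimal equilibrium of $\AG(X,P,G)$ (which exists by Theorem \ref{thmjr10}(ii)) together with the dummy's forced action is then automatically an optimal equilibrium of the extended zero-sum game; the dummy's optimality thus comes for free, and Proposition \ref{propab01a1} concludes. To salvage your direct verification you would need to replace the false monotonicity step by an argument of this kind; as written, the proof does not go through.
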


\begin{proof}
In order to create a zero-sum extended game, we introduce a dummy player with index $0$, which has the terminal payoff $P_{0}=-\sum_{i\in\Mset} P_{i}$ and is not allowed to exercise. By convention, we define her  exercise payoff to be $X_{0}=-\sum_{i\in\Mset} X_{i}$ and we denote $$\wt P=(P_0,P_1,\ldots,P_m),\ \wt X=(X_0,X_1,\ldots,X_m).$$
By appending an extra row and column to $G$, we form the matrix $\wt G$
\[
\wt G =
\begin{pmatrix}
0 & -\sum_{i=1}^m G_{i1} & \cdots & -\sum_{i=1}^m G_{im} \\
0 & G_{11}& \cdots & G_{1m} \\
\vdots & \vdots & \ddots & \vdots \\
0 & G_{m1} & \cdots &  G_{mm}
\end{pmatrix}.
\]
Note that since the dummy player has no right to exercise, the first column of $\wt G$ is in fact irrelevant;
 the column of 0s was chosen to simplify later arguments. From condition \eqref{eqjt01}, we see that
 the off-diagonal terms in $\wt G$ are all non-positive and thus $\wt G$ is a $\Zmat$-matrix.
We consider the extended affine game $\AG(\wt X, \wt P, \wt G)$ with the payoff function given by
(of course, $\Eset (s)$ does not contain the dummy player)
\[
V_i(s)=
\begin{cases}
X_i ,& i\in\Eset(s),\\
\wt P_i+ \wt G_{i \Eset(s)} \big( \wt G_{\Eset(s)\Eset(s)}\big)^{-1}(\wt X_{\Eset(s)}-\wt P_{\Eset(s)}), \quad &i\notin\Eset(s).
\end{cases}
\]

To see that $\AG(\wt X, \wt P, \wt G)$ is a zero-sum game, it suffices to note that the vectors $\wt P, \wt X$ as well as the columns of $\wt G$ lie in the subspace $\big\{ x\in\R^{m+1} : \sum_{i=0}^{m} x_i =0 \big\}$. It is also easy to see that for players $1,\ldots, m$, the payoff function of the extended game $\AG(\wt X, \wt P, \wt G)$ is identical to the original game $\AG( X,  P,  G)$. Therefore, we may set $\wt V_{0}(s)=-\sum_{i\in\Mset} V_i (s)$ for all $s\in\Strat$.

Let $s^*$ be an optimal equilibrium of the original game $\AG( X,  P,  G)$. Since the dummy player only has the action of `not exercising', it is clear that $s^*$ is a Nash equilibrium of $\AG(\wt X, \wt P, \wt G)$. In order to show that $s^*$ is also an optimal equilibrium of $\AG(\wt X, \wt P, \wt G)$, by the result of Kats and Thisse \cite{Kats},
it suffices to show that $\AG(\wt X, \wt P, \wt G)$ is WUC. Since $G$ is a $\Kmot$-matrix and the additional entries in $\wt G$ are non-positive, we may employ the same argument from Theorem \ref{thmjj30}(iii) to conclude that $\AG(\wt X, \wt P, \wt G)$ is indeed WUC. Finally, since $s^*$ is an optimal equilibrium in the zero-sum game $\AG(\wt X, \wt P, \wt G)$, the required result follows immediately from Proposition \ref{propab01a1}.
\end{proof}

\begin{remark} {\rm
Lemma \ref{appgrg} in the appendix shows that the redistribution game $\GRG (X,P, \alpha )$ (where $X,P, \alpha \in\R^m$, $a>0$ and $\sum_{i=1}^m \alpha_i < 1$) is equivalent to $AG(X,P,\whD)$ where $\whD$ is given by \eqref{ceqjp02}.
The sum of column $i$ in $D$ is given by
$$
\alpha_i - \sum_{j=1}^m \alpha_i\alpha_j = \alpha_i \Big(1-\sum_{j=1}^m \alpha_j \Big) \geq 0.
$$
Consequently, by Theorem \ref{thmjt01}, the general redistribution game $\GRG (X,P,\alpha)$ satisfies the additivity property $V^{*}_{\Aset}=\sum_{i\in\Aset} V^{*}_{i}(s)$ for all $\Aset\subseteq\Mset$.}
\end{remark}

%%%%%%%%%%%%%%%%%%%%%%%%%%%%%%%%%%%%%%%%%%%%%%%%%%%%%%%%%%%%%%%%%%%%%%%%%%%%%
%%%%%%%%%%%%%%%%%%%%%%%%%%%%%%%%%%%%%%%%%%%%%%%%%%%%%%%%%%%%%%%%%%%%%%%%%%%%%
\section{Multi-Period Stochastic Affine Games} \label{sec4}
%%%%%%%%%%%%%%%%%%%%%%%%%%%%%%%%%%%%%%%%%%%%%%%%%%%%%%%%%%%%%%%%%%%%%%%%%%%%%
%%%%%%%%%%%%%%%%%%%%%%%%%%%%%%%%%%%%%%%%%%%%%%%%%%%%%%%%%%%%%%%%%%%%%%%%%%%%%

We will now extend the notion of a multi-period $m$-player redistribution game introduced in \cite{Guo2}
(see Definition 5.4 therein). We assume that we are given an underlying probability space $(\Omega, \Filt, \P )$, which is endowed with the filtration $\FF = \{\Filt_t: t=0,1,\ldots,T\}$ representing the information flow. For brevity, we hereafter write $[t,T]= \{ t ,t+1 , \dots , T\}$.

\begin{definition}\label{defmultiperiodgame} {\rm
For each $t=0,1,\ldots,T$, a stochastic multi-period competitive game with an affine redistribution of payoffs,
dubbed a \emph{stochastic affine game} and denoted as $\ASG_t(X,G)$, is defined on the time interval $[t,T]$.
It is specified by the following inputs: \hfill \break
(a) the set of $m$ players $\Mset=\{1,2,\ldots,m\}$, \hfill \break
(b) the family of $\FF$-adapted processes $\bm{X}_t=(X^1_{t}, \ldots, X^m_{t})$ for $t=0,1, \dots ,T$, \hfill \break
(c) the $m\times m$ deterministic $\Kmot$-matrix $G$, \hfill \break
and the following rules of the game: \hfill \break
(i) each player can exercise at any time in the interval $[t,T]$ and the game stops as soon as anyone exercises; if no one exercises before $T$, then everyone must exercise at $T$, \hfill \break
(ii) the strategy $s^k_{t}$ of player $k$ is a random time chosen from the space $\Strat^k_{t}$ of $\FF$-stopping times
  with values in $[t,T]$; hence a strategy profile $s_t=(s^1_{t},\ldots,s^m_{t})\in\Strat_t$ is the $m$-tuple of $\FF$-stopping times, \hfill \break
 (iii) for each strategy profile $s_t \in \Strat_t $, the outcome of the game is the expected payoff vector $\bm{V}_t(s_t)=(V^1_{t}(s_t),\ldots,V^m_{t}(s_t))$, defined by
\begin{gather}\label{eqju11}
V^k_{t}(s_t)=\EP\big( X^k_{\widehat s_t} \I_{\{k\in\Eset(s_t)\}}+\wh X^k_{\widehat s_t}\I_{\{k\notin\Eset(s_t)\}} \,\big|\, \Filt_t \big)
\end{gather}
where $\widehat s_t = s^1_{t}\wedge \cdots \wedge s^m_{t}$ is the minimal stopping time and
$\Eset(s_t)=\{i\in\Mset: s^i_{t} = \widehat s_t\}$ is the random set of earliest stopping players; furthermore,
\begin{gather}\label{eqju12}
\wh X^k_{\widehat s_t}=V^{*k}_{\widehat s_t+1}
+G_{k \Eset(s_t)} \big(G_{\Eset(s_t)\Eset(s_t)}\big)^{-1}
\big( X^{\Eset(s_t)}_{\widehat s_t} - V^{*\Eset(s_t)}_{\widehat s_t+1}\big),\quad \widehat s_t<T ,
\end{gather}
where $\bm{V}_u^*=(V^{*1}_u,\ldots,V^{*m}_u)$ is the value of the game $\ASG_u(X,G)$ for $u=t+1,t+2, \dots ,T$.}
% where $\bm{V}_{\widehat s_t+1}^*=(V^{*1}_{\widehat s_t+1},\ldots,V^{*m}_{\widehat s_t+1})$ is the value of the game $\ASG_{\widehat s_t+1}(X,G)$.}
\end{definition}

As the game is stopped at time $\widehat s_t$, the indicator functions appearing in \eqref{eqju11} are aimed to separate the exercising players from the others: $X^{k}_{\widehat s_t}$ is the payoff for an exercising player, whereas $\wh X^{k}_{\widehat s_t}$ is the payoff for a non-exercising player. Intuitively, the game $\ASG_{\widehat s_t+1}(X,G)$ can be considered as the continuation of the current game if it is not stopped at time $\widehat s_t$. Note also that, in formula \eqref{eqju12}, the term $\wh X^{k}_{\widehat s_t}$ is not defined for $\widehat s_t=T$. This does not matter, however, because if the game is stopped at $T$, then every player must exercise and receive $X^{k}_{T}$, so that $\wh X^{k}_{T}$ is irrelevant.

\begin{remark} {\rm
It is obvious that Definition \ref{defmultiperiodgame} is in fact recursive. Since $\widehat s_t+1 > t$, the payoff of $\ASG_t(X,G)$ depends on the values of $\ASG_{t+1}(X,G), \ldots, \ASG_T(X,G)$, which can also be seen as subgames of $\ASG_t(X,G)$. It is also possible to view the multi-period game as a recursive sequence of embedded single-period games. If the game $\ASG_t(X,G)$ is stopped at $t$, then the exercising players receive $X^k_{t}$, while the other players receive
\begin{gather}\label{eqju13}
\E_\P \big(V^{*k}_{t+1} \,|\, \Filt_t \big) +G_{k \Eset} \big(G_{\Eset\Eset}\big)^{-1} \big( X^\Eset_{t} - \E_\P \big( V^{*\Eset}_{t+1} \,|\, \Filt_t\big) \big).
\end{gather}
If $\ASG_{t}(X,G)$ is not stopped at time $t$, then it reduces to the game $\ASG_{t+1}(X,G)$. And, finally, the game $\ASG_{T}(X,G)$ always stops at time $T$ as everyone exercises.}
\end{remark}

\begin{remark}{\rm
The stopping game described by Definition \ref{defmultiperiodgame} is perhaps not the most obvious generalization of the single-period  stochastic affine game. A more natural generalization would be for the non-exercising player $k$ to receive, for any $s_t \in \Strat_t$,
\begin{gather} \label{gamett}
X_{k,T} +G_{k \Eset(s_t)} \big(G_{\Eset(s_t)\Eset(s_t)}\big)^{-1}\big( X^{\Eset(s_t)}_{\widehat s_t} - X^{\Eset(s_t)}_{T}\big),\quad \widehat s_t<T,
\end{gather}
when the game is stopped, that is, using $X^{\Eset(s_t)}_{T}$ instead of the value $V^{*\Eset(s_t)}_{\widehat s_t+1}$.
However, even in deterministic cases, this does not always produce optimal equilibria in pure strategies.
For example, let us consider a game with
\[
\bm{X}_0=
\begin{pmatrix}
-1 \\
-1 \\
0
\end{pmatrix},\quad
\bm{X}_1=
\begin{pmatrix}
-2 \\
-2 \\
4
\end{pmatrix},\quad
\bm{X}_2=
\begin{pmatrix}
0\\
0\\
0
\end{pmatrix},\quad
G=
\begin{pmatrix}
2/9 & -1/9 & -1/9 \\
-1/9 & 2/9 & -1/9 \\
-1/9 & -1/9 & 2/9
\end{pmatrix}.
\]
It is clear that player 3 will always want to exercise at time 1, while there is a `prisoner's dilemma' between players 1 and 2 at time 0. Then the game given by \eqref{gamett} with $T=1$ has two Nash equilibria (with different payoffs), but no optimal equilibria in pure strategies.}
\end{remark}

%%%%%%%%%%%%%%%%%%%%%%%%%%%%%%%%%%%%%%%%%%%%%%%%%%%%%%%%%%%%%%%%%%%%%%%%%%%%%
%%%%%%%%%%%%%%%%%%%%%%%%%%%%%%%%%%%%%%%%%%%%%%%%%%%%%%%%%%%%%%%%%%%%%%%%%%%%%
\subsection{Optimal Equilibrium}  \label{sec4b}
%%%%%%%%%%%%%%%%%%%%%%%%%%%%%%%%%%%%%%%%%%%%%%%%%%%%%%%%%%%%%%%%%%%%%%%%%%%%%
%%%%%%%%%%%%%%%%%%%%%%%%%%%%%%%%%%%%%%%%%%%%%%%%%%%%%%%%%%%%%%%%%%%%%%%%%%%%%

For convenience, we introduce the following notation.

\begin{definition} {\rm
Suppose $G$ is a $\Pmot$-matrix and $X,P \in \R^m$ are given vectors. We denote by $\SOL(X,P,G)$ the unique Nash equilibrium payoff of the single-period affine game $\AG(X,P,G)$.}
\end{definition}

The notation of $\SOL$ was originally used to denote the solution of a linear complementarity problem. This is consistent with our usage, since $\SOL(X,P,G)$ is indeed the solution of $\LCP(P-X,G)$ if $G$ is non-singular. If $G$ is singular, then $\LCP(P-X,G)$ may not have a solution. Nevertheless we use $\SOL(X,P,G)$ to denote the Nash equilibrium payoff of $\AG(X,P,G)$, for convenience.
So far, we have used $V^*$ when referring to $\SOL(X,P,G)$. But in the upcoming discussions, $\SOL(X,P,G)$ is more appropriate, since it explicitly shows the dependence on $X$ and $P$. In the special case of $G$ being a positive-definite symmetric matrix, we have the equality $\SOL(X,P,G)=\pi_{\Oo (X)}^{G^{-1}}(P)$, as demonstrated in Theorem \ref{thmjj30}.

At this moment, it is not clear that the game introduced in Definition \ref{defmultiperiodgame} is well-defined, since it is not yet known whether the game $\ASG_t(X,G)$ always has a value. The following theorem shows that this is indeed the case.

\begin{theorem} \label{thmmultiperiodvaluea}
Let $X_t=(X^1_{t}, \ldots, X^m_{t})$ be $\FF$-adapted processes and $G$ be a deterministic $\Kmot$-matrix. Recursively define the $\Filt_t$-measurable vector $\bm{U}_t=(U^1_{t},\ldots,U^m_{t})$ by setting $\bm{U}_T:=\bm{X}_T$ and
\begin{gather}\label{eqju22}
\bm{U}_t := \SOL \big(X_t,\EP \big(\bm{U}_{t+1}\,|\,\Filt_{t}\big), G\big),\quad  t = 0,1, \dots , T.
\end{gather}
Define the $\FF$-stopping times $\tau^*_t =(\tau^{*1}_t,\ldots,\tau^{*m}_t)$ by
\begin{gather}\label{eqju23}
\tau^{*i}_t := \inf \big\{ u\in [t,T]: U^i_{u}=X^i_{u} \big\}.
\end{gather}
Then the following statements are valid: \hfill \break
(i)  the equality $\bm{U}_t=\bm{V}_t(\tau^*_t)$ holds for all $t$, \hfill \break
(ii) the strategy profile $\tau^*_t$ is an optimal equilibrium of $\ASG_t(X,G)$ and $U_t=V^*_t$ is the value.
\end{theorem}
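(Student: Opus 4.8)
The natural approach is induction on the number of periods, i.e.\ backwards on $t$ from $T$ down to $0$, exploiting the recursive structure already built into Definition \ref{defmultiperiodgame}. At $t=T$ everything is trivial: the game $\ASG_T(X,G)$ forces all players to exercise, so $\bm{V}_T(s_T)=\bm{X}_T=\bm{U}_T$, the stopping times $\tau^{*i}_T=T$, and $\bm{U}_T=\bm{V}^*_T$ is vacuously the value. For the inductive step, fix $t<T$ and assume parts (i) and (ii) hold for $t+1$; in particular $\bm{U}_{t+1}=\bm{V}^*_{t+1}$ is the value of $\ASG_{t+1}(X,G)$. The key observation is that, conditionally on $\Filt_t$, the game $\ASG_t(X,G)$ "played at the first step only'' is exactly a single-period stochastic affine game with exercise payoff $\bm{X}_t$ and terminal payoff $\EP(\bm{U}_{t+1}\mid\Filt_t)$: if someone exercises at $t$ they receive $X^k_t$, and a non-exercising player $k$ receives the quantity in \eqref{eqju13}, which by the inductive hypothesis $\bm{V}^*_{t+1}=\bm{U}_{t+1}$ is precisely the $k$th component of $P+G_{\cdot\Eset}(G_{\Eset\Eset})^{-1}(X_\Eset-P_\Eset)$ with $P=\EP(\bm{U}_{t+1}\mid\Filt_t)$. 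If nobody exercises at $t$, the continuation is $\ASG_{t+1}(X,G)$, whose value under the inductive hypothesis is $\EP(\bm{U}_{t+1}\mid\Filt_t)=P$ — i.e.\ the "continue'' outcome coincides with the single-period "all continue'' payoff $P$. Thus $\ASG_t(X,G)$ collapses, in the conditional-expectation sense, to the single-period game $\AG(\bm{X}_t,\EP(\bm{U}_{t+1}\mid\Filt_t),G)$, and \eqref{eqju22} says $\bm{U}_t=\SOL(\bm{X}_t,\EP(\bm{U}_{t+1}\mid\Filt_t),G)$ is its unique Nash/optimal equilibrium payoff (using that $G$ is a $\Kmot$-matrix, so Theorem \ref{thmjr10} applies and the single-period game is WUC with a unique value).

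With this reduction in hand, I would argue as follows. For part (i), one shows that the strategy profile $\tau^*_t$ defined by \eqref{eqju23} "restricted to the first period'' is the single-period equilibrium $s^*$ given by the $\SOL$ operator — player $i$ exercises at $t$ iff $U^i_t=X^i_t$, which by the characterization in the proof of Theorem \ref{thmjj30} is exactly the condition $w^*_i=0$ — and that for $u>t$, $\tau^*_t$ agrees on $\{\htau_t>t\}$ with $\tau^*_{t+1}$. Then a direct computation of $\bm{V}_t(\tau^*_t)$ via \eqref{eqju11}–\eqref{eqju12}, splitting on $\{\htau_t=t\}$ versus $\{\htau_t>t\}$, tower property, and the inductive identity $\bm{U}_{t+1}=\bm{V}_{t+1}(\tau^*_{t+1})$, yields $\bm{V}_t(\tau^*_t)=\SOL(\bm{X}_t,\EP(\bm{U}_{t+1}\mid\Filt_t),G)=\bm{U}_t$, which is (i).

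For part (ii), I want to show $\tau^*_t$ is an optimal equilibrium, i.e.\ for every $k$, every unilateral deviation of player $k$ weakly decreases $V^k_t$, and every deviation of the other players weakly increases it (see the Remark after Definition \ref{defaa02}). Here is where the single-period WUC/optimal-equilibrium theory does the heavy lifting: any deviation by player $k$ in the multi-period game decomposes into a "first-period'' deviation (whether or not $k$ exercises at $t$) followed by some continuation strategy. A deviation that changes only the continuation (on $\{\htau_t>t\}$) is controlled by the inductive hypothesis that $\tau^*_{t+1}$ is an optimal equilibrium of $\ASG_{t+1}(X,G)$; a deviation that changes the first-period decision is controlled by the fact that $s^*$ is an optimal equilibrium of the single-period game $\AG(\bm{X}_t,\EP(\bm{U}_{t+1}\mid\Filt_t),G)$ — one must check that replacing the "true'' continuation value by its single-period surrogate $P=\EP(\bm{U}_{t+1}\mid\Filt_t)$ is legitimate, which again is the inductive identity $\bm{V}^*_{t+1}=\bm{U}_{t+1}$. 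Combining the two — in the spirit of the standard dynamic-programming "one-step improvement'' argument — gives that no profitable unilateral deviation exists and that each player can guarantee $U^k_t$, so $\tau^*_t$ is optimal and $\bm{U}_t=\bm{V}^*_t$ is the value. The main obstacle I anticipate is exactly this last bookkeeping step: making the decomposition of an arbitrary multi-period deviation into "first-period part'' plus "continuation part'' precise at the level of stopping times and $\Filt_t$-conditioning, and verifying that the WUC property transfers cleanly through the conditional expectations in \eqref{eqju11}–\eqref{eqju12} without mixed strategies sneaking in — everything must stay within $\FF$-adapted pure strategies. The remaining ingredients (measurability of $\bm{U}_t$, well-definedness of $\tau^*_t$ as an $\FF$-stopping time, integrability so the conditional expectations make sense) are routine given that $G$ is deterministic and the $X^k$ are $\FF$-adapted.
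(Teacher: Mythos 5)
Your plan is correct and follows essentially the same route as the paper's proof: backward induction in $t$, reduction of the time-$t$ decision to the single-period game $\AG(X_t,\EP(\bm{U}_{t+1}\,|\,\Filt_t),G)$ via the inductive identity $\bm{V}^*_{t+1}=\bm{U}_{t+1}$, a split on whether the game is stopped at $t$ or continues (where $V_t(s_t)=\EP(V_{t+1}(s_t)\,|\,\Filt_t)$), and the two-sided optimal-equilibrium inequality obtained by combining the single-period WUC/optimal-equilibrium result with the inductive optimal equilibrium of $\ASG_{t+1}(X,G)$. The "bookkeeping" you flag is handled in the paper exactly as you anticipate, by a three-case analysis on the minimal stopping times of the deviation profiles $(\tau^{*k}_t,s^{-k}_t)$ and $(s^k_t,\tau^{*,-k}_t)$.
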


 Before proceeding to the proof of Theorem \ref{thmmultiperiodvaluea}, we will state an auxiliary lemma in which
 we consider a single-period setting.
 % , but where the vectors $\bm{X}$ and $\bm{P}$ are random and not known at the time of exercise.
 We assume that we are given the $\Filt_1$-measurable random vectors $\bm{P}=(P_1, \ldots, P_m)$ and $\bm{X}=(X_1, \ldots, X_m)$ and a deterministic $m\times m$ matrix $G$ with non-zero proper principal minors,
that is, with $\det(G_{\Eset\Eset})\neq 0$ for all proper subsets $\Eset \subset \Mset$.
 % Definition \ref{defstochgame} refers to an underlying probability space $(\Omega, \Filt, \P )$, which is endowed with
%the filtration $\FF = \{\Filt_t: t=0, 1\}$. We assume that we are given the $\Filt_1$-measurable random vectors $\bm{P}=(P_1, \ldots, P_m)$ and $\bm{X}=(X_1, \ldots, X_m)$ and a deterministic $m\times m$ matrix $G$ with non-zero proper principal minors,
%that is, with $\det(G_{\Eset\Eset})\neq 0$ for all proper subsets $\Eset \subset \Mset$.
%\begin{definition} \label{defstochgame}{\rm
In an $m$-player single-period stochastic affine game $\SAG (X,P,G)$ each player can only exercise at time 0 and the payoffs are distributed at time 1. The space of strategy profiles is $\Strat = \prod_{i\in\Mset} \Strat^i$ where $\Strat^i=\{0,1\}$ is the space of strategies for player $i$. % with $s^k=0$ meaning player $k$ exercises at time 0.
For any $s \in \Strat $, the outcome of the game is the expected payoff vector $\bm{V}(s)=(V_1 (s),\ldots,V_m (s))$, defined by	
\[
V_i(s)=\EP\big( X_i \I_{\{i\in\Eset(s)\}}+\wh X_i\I_{\{i\notin\Eset(s)\}} \big)
\]
where $\Eset(s)$ is the set of exercising players and
\[
\wh X_i=P_i+G_{i \Eset(s)} \big(G_{\Eset(s)\Eset(s)}\big)^{-1}\big(X_{\Eset(s)}-P_{\Eset(s)}\big).
\]
%\end{definition}
By the linearity of the payoff function $V_k(s)$, the stochastic game consider here is essentially equivalent the deterministic affine game $\AG( \EP(\bm{X}), \EP(\bm{P}), G)$. In view of Theorem \ref{thmjj30}, the proof of the next lemma is straightforward and thus it is omitted.

\begin{lemma} \label{thmexpectedsoln}
Consider the single-period stochastic affine game $\SAG (X,P,G)$. Then: \hfill \break
(i) If $G$ is a $\Pmot$-matrix, then $V^*=\SOL \big( \EP(\bm{X}), \EP(\bm{P})\big)$ is the unique Nash equilibria payoff and a possible Nash equilibrium $s^*=(s^{*}_{1},\ldots,s^{*}_{m})$ is given by
\[
s^{*}_{i}=0\ \iff \ \SOL \big( \EP(\bm{X}), \EP(\bm{P})\big)_i=\E_\P\left(X_i\right).
\]
(ii) If $G$ is a positive definite symmetric matrix, then $V^*=\pi^{G^{-1}}_{\Oo (X)}(\EP \left(\bm{P}\right))$
%\begin{gather}\label{xeqjq104}
%V^*=\pi^{G^{-1}}_{\Oo (X)}(\EP \left(\bm{P}\right))
%\end{gather}
where $\Oo (X)$ is the orthant given by
\[
\Oo (X) := \big\{\bm{x}\in\R^m:\, x_i\geq \E_\P\left(X_i\right), \ 1\leq i \leq m \big\}
\]
and the projection $\pi^{G^{-1}}_{\Oo (X)}$ is taken under the inner product $\langle x, y\rangle^{G^{-1}} = x^{\mathsf T} G^{-1} y$. \hfill \break (iii) If $G$ is a $\Kmot$-matrix, then the game $\SAG(X,P,G)$ is WUC. All Nash equilibria are also optimal equilibria and $V^*$ is also the unique value.
\end{lemma}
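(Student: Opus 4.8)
The plan is to make rigorous the remark preceding the statement: because only \emph{pure} strategies are admitted, taking expectations commutes with the affine redistribution, so $\SAG(X,P,G)$ is payoff-identical to the deterministic game $\AG(\EP(\bm X),\EP(\bm P),G)$, after which the three assertions follow by quoting Theorems \ref{thmjj30} and \ref{thmjr10}.

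First I would fix a strategy profile $s\in\Strat=\{0,1\}^m$ and note that $\Eset(s)=\{i:s^i=0\}$ — and hence every indicator $\I_{\{i\in\Eset(s)\}}$ and $\I_{\{i\notin\Eset(s)\}}$ — is deterministic; moreover the standing hypothesis on proper principal minors makes $G_{\Eset(s)\Eset(s)}$ invertible for each $s$ (the second branch of the payoff being vacuous when $\Eset(s)=\Mset$). Pulling the deterministic factor $G_{i\Eset(s)}\big(G_{\Eset(s)\Eset(s)}\big)^{-1}$ outside the expectation and using linearity of $\EP$, one gets
\[
\bm V(s)=\EP(\bm P)+G_{\cdot\Eset(s)}\big(G_{\Eset(s)\Eset(s)}\big)^{-1}\big(\EP(\bm X)_{\Eset(s)}-\EP(\bm P)_{\Eset(s)}\big),
\]
which is precisely the payoff function of $\AG(\EP(\bm X),\EP(\bm P),G)$ in the sense of Definition \ref{defjr01} (and of Definition \ref{defjj08} when $G$ is non-singular).

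Next I would observe that the notions in Definition \ref{defaa02} — Nash equilibrium, optimal equilibrium, value — together with the WUC property, depend only on the map $V:\Strat\to\R^m$. Since this map agrees for $\SAG(X,P,G)$ and $\AG(\EP(\bm X),\EP(\bm P),G)$, the two games share the same equilibria, the same WUC status and the same value. It then remains to invoke the deterministic theory. For (i), Theorem \ref{thmjr10}(i) yields a unique Nash equilibrium payoff, which by definition equals $\SOL(\EP(\bm X),\EP(\bm P),G)$, and the equilibrium profile exhibited in its proof (read off from the $\LCP$ solution when one exists, or else the profile $\Eset(s)=\Mset$) satisfies $s^*_i=0\iff V^*_i=\EP(X_i)$, where in the degenerate case all players exercise and $V^*_i=\EP(X_i)$ for every $i$. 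For (ii), a positive definite symmetric $G$ has all principal minors positive by Sylvester's criterion, hence is a $\Pmat$-matrix, so Theorem \ref{thmjj30}(ii) applies and, after translating the orthant by $\EP(\bm X)$, gives $V^*=\pi^{G^{-1}}_{\Oo(X)}(\EP(\bm P))$ with $\Oo(X)=\{x:x_i\geq\EP(X_i)\}$. For (iii), Theorem \ref{thmjr10}(ii) gives the WUC property and a unique value, and the Kats--Thisse result \cite{Kats} that every Nash equilibrium of a WUC game is optimal, with common payoff $V^*$, completes the claim.

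I do not anticipate a genuine obstacle: the entire content is the interchange of $\EP$ with the redistribution in the first step, which is legitimate precisely because $\Eset(s)$ is non-random for a pure profile $s$ — this is where the restriction to pure strategies is essential — together with the routine need to treat the degenerate case in (i), exactly as in the proof of Theorem \ref{thmjr10}, where $\LCP(\EP(\bm P)-\EP(\bm X),G)$ has no solution.
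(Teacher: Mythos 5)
Your proposal is correct and follows essentially the paper's own route: the paper likewise notes that, by linearity of the expected payoff (with $\Eset(s)$ deterministic for pure profiles), $\SAG(X,P,G)$ reduces to the deterministic game $\AG(\EP(\bm X),\EP(\bm P),G)$, and then omits the details as straightforward consequences of the single-period results. Your write-up merely fills in this reduction explicitly and, if anything, is slightly more careful in citing Theorem \ref{thmjr10} (rather than only Theorem \ref{thmjj30}) for the possibly singular $\Pmot$/$\Kmot$ cases, including the degenerate situation where the associated LCP has no solution.
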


\noindent{\it Proof of Theorem \ref{thmmultiperiodvaluea}.}
Throughout the proof, we will use the following properties: \hfill \break
(a) If the game $\ASG_t(X,G)$ is stopped at time $t$, then the payoff function coincides with the single-period  affine game $\AG(X_t,\EP({V}_{t+1}^*\,|\,\Filt_t), G)$. This follows from Definition \ref{defmultiperiodgame} and formula \eqref{eqju13}. \hfill \break
 (b) If the game $\ASG_t(X,G)$ is not stopped at time $t$ under some strategy profile $s_t \in\Strat_t$, then $s_t$ is also a strategy profile of $\ASG_{t+1}(X,G)$, so that $s_t \in\Strat^{t+1}$. Furthermore, by the definition of $V_t(s_t)$ in \eqref{eqju11}, the equality $V_t(s_t)=\EP (V_{t+1}(s_t) \,|\, \Filt_{t})$ holds. \hfill \break
 (c) By \eqref{eqju22}--\eqref{eqju23} and Lemma \ref{thmexpectedsoln},  the $\FF$-stopping times  $\tau^*_t$ corresponds to an optimal equilibrium of the single-period stochastic affine game $\AG(X_t,\EP(U_{t+1}\,|\,\Filt_t), G)$ and $U_t=\SOL \big(X_t,\EP(U_{t+1}\,|\,\Filt_t),G\big)$ is the corresponding value.

The statements (i) and (ii) will be established simultaneously by the backward induction. For $t=T$, we have that $\tau^*_T =(T,\ldots,T)$. The stochastic affine game $\ASG_T(X,G)$ is always stopped at time $T$ with the payoff vector $\bm{X}_T=\bm{U}_T=\bm{V}_T(\tau^*_T) = V^*_T$ also being the value. Let us now assume both statements are true for the game $\ASG_{t+1}(X,G)$, so its value is given by $\bm{V}_{t+1}^*=\bm{U}_{t+1}$.

We first prove part (i). Let us denote $\whts =\tau^{*1}_t\wedge\cdots \wedge\tau^{*m}_t$.   If $\whts =t$, the game is stopped at time $t$. By (c), $U_t$ is the payoff of $\tau^*_t$ in $\AG(X_t,\EP(U_{t+1}\,|\,\Filt_t), G)$. Since by the induction hypothesis $U_{t+1}=V^*_{t+1}$, from  property (a) it follows that $U_{t}=V_t(\tau^*_t)$.

If $\whts  \geq t+1$, then the game is not stopped at time $t$. By the definition of $\tau^*_t$, we have that $U^i_{t}>X^i_{t}$ for all $i\in\Mset$. From  property (c), we see that no one exercises in the optimal equilibrium of $\AG(X_t,\EP(U_{t+1}\,|\,\Filt_t), G)$ and thus $U_t=\EP( U_{t+1}\, |\, \Filt_t)$. Combining this with  property (b) and the induction hypothesis, we obtain
\[
U_t=\EP( U_{t+1}\, |\, \Filt_t)= \EP( V_{t+1}(\tau^*_t)\, |\, \Filt_t) = {V}_t(\tau^*_t).
\]

We now proceed to the proof of part (ii). From part (i), we know that
 \[
 \bm{V}_t(\tau^*_t)=\bm{U}_t=\SOL \big( X_t,\EP(U_{t+1}\,|\,\Filt_t),G\big)
 =\SOL \big( X_t,\EP(V_{t+1}^*\,|\,\Filt_t),G \big).
  \]
To check that $\tau^*_t$ is an optimal equilibrium, we require for each $k\in\Mset$,
\begin{gather}\label{eqju27}
V^k_{t}(\tau^{*k}_t , s^{-k}_t)\geq \SOL\big(X_t,\EP(V_{t+1}^*\,|\,\Filt_t),G\big)_k
\geq V^k_{t}(s^k_t, \tau^{*,-k}_t ), \quad \forall \, s^k_t \in\Strat^k_t ,\, \forall \, s^{-k}_t \in \Strat^{-k}_t .
\end{gather}
Let $s'=(\tau^{*k}_t , s^{-k}_t), s''=(s^k_t , \tau^{*,-k}_t )$ be alternative strategy profiles with the minimal stopping times $\widehat s'_t, \widehat s''_t$, respectively. We need to examine three cases.

\noindent{\it Case 1}:$\, $ If $\widehat s'_t=\widehat s''_t =t$, then, by property (a), both $s'_t$ and $s''_t$ can be interpreted as strategy profiles of the single-period  affine game $\AG(X_t,\EP({V}_{t+1}^*\,|\,\Filt_t), G)$. Hence the validity \eqref{eqju27} can be deduced from property (c), because $\SOL \big(X_t,\EP(V_{t+1}^*\,|\,\Filt_t),G \big)$ is the value of the game $\AG(X_t,\EP({V}_{t+1}^*\,|\,\Filt_t), G)$ for player $k$.

\noindent{\it Case 2}:$\, $ If $\widehat s'_t \geq t+1$, then, by property (b), we have that ${V}_t(s'_t)=\EP(V_{t+1}(s'_t)\,|\,\Filt_t)$.
By an application of the induction hypothesis, we obtain
\[
V^k_{t}(s'_t)=\EP \big(V^k_{t+1}(s'_t)\,\big|\,\Filt_{t}\big)\geq\E_\P \big(V^{*k}_{t+1}\,\big|\,\Filt_{t}\big).
\]
Since $\tau^{*k}_t \geq t+1$, property (c) tells us that player $k$ does not exercise in the optimal equilibrium of the game $\AG(X_t,\EP({V}_{t+1}^*\,|\,\Filt_t), G)$. Interpreting $\EP (V^{*k}_{t+1}\,|\,\Filt_{t})$ as the expected payoff of player $k$ if no one exercises, we have
\[
\SOL \big( X_t,\EP(V_{t+1}^*\,|\,\Filt_t),G \big)_k \leq \EP (V^{*k}_{t+1}\,|\,\Filt_{t}) \leq {V}^k_t(s'_t),
\]
as required.

\noindent{\it Case 3}:$\, $ If $\widehat s''_t \geq t+1$, then the argument is similar to Case 2. Again, by property (b) and the induction hypothesis, we obtain
\[
V^k_{t}(s''_t)=\EP \big(V^k_{t+1}(s''_t)\,\big|\,\Filt_{t}\big)\leq\E_\P \big(V^{*k}_{t+1}\,\big|\,\Filt_{t}\big).
\]
Since $\tau^{*i}_t \geq t+1$ for all $i\neq k$, property (c) tells us that none of the players from $\Mset\setminus\{k\}$ exercises in the optimal equilibrium of the game $\AG(X_t,\EP({V}_{t+1}^*\,|\,\Filt_t), G)$. Interpreting $\EP (V^{*k}_{t+1}\,|\,\Filt_{t})$ as the expected payoff of player $k$ if no one exercises, we get
\[
\SOL \big(X_t,\EP(V_{t+1}^*\,|\,\Filt_t),G\big)_k \geq \EP (V^{*k}_{t+1}\,|\,\Filt_{t}) \geq {V}^k_t(s'_t),
\]
as required. We conclude that in all three cases, \eqref{eqju27} is valid. Therefore, $U_t$ is the value and $\tau^*_t $ is an optimal equilibrium of $\ASG_t(X,G)$.
\hfill $\Box$

\subsection{Coalition Values}

 We have shown in Theorem \ref{thmjt01} that under certain conditions on $G$ in the affine game $\AG(X,P,G)$, the value $V^*_{\Aset}$ for a coalition of players $\Aset\subseteq\Mset$ is additive, meaning that $V^*_{\Aset}=\sum_{i\in\Aset} V^*_{i}$. It is not hard to generalize those arguments to the stochastic affine game $\ASG_t(X,G)$.

\begin{theorem} \label{thmju50}
Consider the multi-period stochastic affine game $\ASG_t(X,G)$ where $G$ is a $\Kmot$-matrix.
Suppose that the column sums of $G$ are non-negative, that is, condition \eqref{eqjt01} holds.
Then for any $\Aset\subseteq\Mset$, the value $V_t^{*\Aset}$ exists and
\[
V_t^{*\Aset}=\esssup_{s_t^\Aset\in\Strat^\Aset_t}\essinf_{s^{-\Aset}_t\in\Strat^{-\Aset}_t} V^\Aset_t(s^\Aset_t, s^{-\Aset}_t)=\essinf_{s^{-\Aset}_t\in\Strat^{-\Aset}_t} \esssup_{s^\Aset_t \in\Strat^\Aset_t}
V^\Aset_t(s^\Aset_t, s^{-\Aset}_t) = \sum_{i\in\Aset} V_t^{*i}.
\]
\end{theorem}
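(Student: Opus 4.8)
The plan is to transplant the dummy-player argument from the proof of Theorem~\ref{thmjt01} into the multi-period stochastic setting; the only genuinely new ingredient is a conditional version of Proposition~\ref{propab01a1}.

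I would first record the stochastic analogue of Proposition~\ref{propab01a1}: if the multi-period game $\ASG_t(X,G)$ is \emph{zero-sum}, in the sense that $\sum_{i\in\Mset}V^i_t(s_t)=0$ for every $s_t\in\Strat_t$, and admits an optimal equilibrium $\si_t$ with value $V^*_t=V_t(\si_t)$, then for every $\Aset\subseteq\Mset$ the value $V_t^{*\Aset}$ exists and equals $\sum_{i\in\Aset}V_t^{*i}$. Its proof is that of Proposition~\ref{propab01a1} word for word, with $\sup$ and $\inf$ replaced by $\esssup$ and $\essinf$ over $\Filt_t$-conditional families of random variables; the only facts used are that an optimal equilibrium gives $V^i_t(\si_t)=\essinf_{s^{-i}_t\in\Strat^{-i}_t}V^i_t(\si^i_t,s^{-i}_t)$ for each $i$, that essential extrema commute with finite sums, and the general min-max inequality, all quantities being $\Filt_t$-measurable. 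Since $\ASG_t(X,G)$ is itself not zero-sum, the remaining work is to manufacture a zero-sum game to which this lemma applies.

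Next I would introduce a dummy player with index $0$, never allowed to exercise, carrying the $\FF$-adapted exercise process $X^0_t:=-\sum_{i\in\Mset}X^i_t$, and form $\wt X_t=(X^0_t,X^1_t,\ldots,X^m_t)$ together with the $(m{+}1)\times(m{+}1)$ matrix $\wt G$ built from $G$ exactly as in the proof of Theorem~\ref{thmjt01} (a leading zero column, a leading row of negated column sums of $G$, and $G$ in the lower-right block). Condition~\eqref{eqjt01} makes every off-diagonal entry of $\wt G$ non-positive. Because player $0$ never exercises, the random set $\Eset(s_t)$ never contains $0$, so every matrix inversion in the recursive definition of the extended game $\ASG_t(\wt X,\wt G)$ is over a principal submatrix $G_{\Eset\Eset}$ of $G$ and hence legitimate; consequently the extended game is well-defined and the backward-induction machinery of Theorem~\ref{thmmultiperiodvaluea} runs unchanged, producing recursively defined $\wt U_t$, a value $\wt V^*_t=\wt U_t$, and a candidate optimal equilibrium. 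Then I would check that $\wt X_t$ and all columns of $\wt G$ lie in the hyperplane $\{x\in\R^{m+1}:\sum_{i=0}^{m}x_i=0\}$, so that by induction $\wt U_t$ and every payoff vector $\wt V_t(s_t)$ lie there as well; this is precisely the assertion that $\ASG_t(\wt X,\wt G)$ is zero-sum. Moreover the payoffs of players $1,\ldots,m$ in the extended game coincide with those of $\ASG_t(X,G)$, while $\wt V^0_t(s_t)=-\sum_{i\in\Mset}V^i_t(s_t)$.

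It remains to exhibit an optimal equilibrium of the zero-sum game $\ASG_t(\wt X,\wt G)$. Taking the optimal equilibrium $\tau^*_t$ of $\ASG_t(X,G)$ furnished by Theorem~\ref{thmmultiperiodvaluea} and appending the dummy's unique action yields a Nash equilibrium of $\ASG_t(\wt X,\wt G)$; to upgrade it to an optimal equilibrium I would show that $\ASG_t(\wt X,\wt G)$ is WUC, since the Kats and Thisse~\cite{Kats} implication ``Nash $\Rightarrow$ optimal in a WUC game'' is purely order-theoretic and survives the passage to the stochastic multi-period setting. Proving WUC is the main obstacle: one cannot simply quote Definition~\ref{defmultiperiodgame} or Theorem~\ref{thmmultiperiodvaluea}, because $\wt G$ has a zero column and is therefore not a $\Kmot$-matrix. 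Instead I would argue by backward induction in $t$ using the recursive decomposition into embedded single-period games (properties (a)--(c) in the proof of Theorem~\ref{thmmultiperiodvaluea}): at each date the embedded game $\AG\big(\wt X_t,\EP(\wt V^*_{t+1}\,|\,\Filt_t),\wt G\big)$ is WUC because the off-diagonal entries of $\wt G$ are non-positive and every submatrix $\wt G_{\Eset\Eset}=G_{\Eset\Eset}$ is a $\Kmat$- or $\Kmot$-matrix (so the argument of Theorem~\ref{thmjj30}(iii) and Lemma~\ref{thmexpectedsoln}(iii) applies), and a unilateral change of one player's stopping time changes any other player's conditional payoff by a quantity of the opposite sign, splitting into the three cases according to whether the minimal stopping time is attained at $t$ or later exactly as in Cases~1--3 of the proof of Theorem~\ref{thmmultiperiodvaluea}. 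Once WUC is established, $\tau^*_t$ is an optimal equilibrium of the zero-sum game $\ASG_t(\wt X,\wt G)$, and applying the conditional version of Proposition~\ref{propab01a1} (noting $0\notin\Aset$) delivers both the $\esssup$--$\essinf$ representation and the additivity $V_t^{*\Aset}=\sum_{i\in\Aset}V_t^{*i}$.
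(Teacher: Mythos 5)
Your overall architecture (dummy player, zero-sum extension, a conditional version of Proposition \ref{propab01a1}) matches the paper's, but the step you yourself identify as ``the main obstacle'' --- proving that the extended multi-period game $\ASG_t(\wt X,\wt G)$ is WUC and then invoking Kats--Thisse to upgrade the Nash equilibrium to an optimal one --- is a genuine gap, and it is precisely the step the paper is engineered to avoid. The paper never claims (nor needs) the WUC property for any multi-period game: since the dummy player has no choices, $\tau^*_t$ automatically retains the optimal-equilibrium inequalities for players $1,\ldots,m$, and the \emph{only} new thing to verify is the dummy's optimality, i.e.\ that $\tau^*_t$ maximizes the aggregate payoff $\sum_{i\in\Mset}V^i_t(s_t)$ over all $s_t\in\Strat_t$ (condition \eqref{eqju52}). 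This is proved by a direct backward induction: on $\{\widehat s_t=t\}$ one is in the embedded single-period game, where the single-period result of Theorem \ref{thmjt01} (WUC of the single-period extended game) applies; on $\{\widehat s_t\ge t+1\}$ the induction hypothesis applies. No multi-period WUC is needed anywhere.

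Your sketch of multi-period WUC does not go through as stated. The comparison of two strategy profiles differing only in player $k$'s stopping time reduces, path by path at the random minimal stopping time, to a single-period comparison in which the ratio between the change in player $l$'s payoff and the change in player $k$'s payoff is a (non-positive) Schur-complement quantity that depends on the random exercising set and hence on $\omega$. Pathwise sign reversal with an $\omega$-dependent proportionality factor does \emph{not} imply sign reversal of the $\Filt_t$-conditional expectations: one can have $\E_\P(\Delta_k\,|\,\Filt_t)>0$ and $\E_\P(\Delta_l\,|\,\Filt_t)>0$ simultaneously when the factor varies across paths. (Lemma \ref{thmexpectedsoln}(iii) escapes this because in the single-period stochastic game the strategy profile is deterministic and the payoff is linear in $(X,P)$, so the game literally collapses to a deterministic affine game in the expectations; nothing of the sort is available once strategies are stopping times.) Kats--Thisse is moreover a statement about deterministic normal-form payoffs, so even granting a suitable $\omega$-wise WUC notion, its use here would need justification. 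A secondary, repairable, imprecision: $\wt G$ is not a $\Kmot$-matrix (its $\{0\}$-principal minor vanishes), so Theorem \ref{thmmultiperiodvaluea} cannot be quoted ``unchanged'' for $\ASG_t(\wt X,\wt G)$; one should instead define the dummy's value through the zero-sum identity, as the paper does by simply setting $V^0_t(s_t):=-\sum_{i\in\Mset}V^i_t(s_t)$. The fix for your argument is to drop the WUC/Kats--Thisse route entirely and prove the dummy's optimality \eqref{eqju52} directly by the two-case backward induction described above; with that replacement, your remaining steps (zero-sum verification and the conditional Proposition \ref{propab01a1}) are sound and coincide with the paper's.
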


\begin{proof}
As in Theorem \ref{thmjt01}, we introduce the extended game with a dummy player with index 0, who does nothing and has the payoff function $V^{0}_t(s)=-\sum_{i\in\Mset} V^i_t(s_t)$ for all $s_t \in \Strat_t$. Suppose that $\tau^*_t$ is an optimal equilibrium of $\ASG_t(X,G)$. Our goal is to show that $\tau^*_t$ is also an optimal equilibrium of the extended game.
Since the dummy player cannot make any choices, it is clear that $\tau^*_t$ is a Nash equilibrium of the extended game and has the optimal equilibrium property for players $1,2,\ldots,m$.

To check that $\tau^*_t$ has also the optimal equilibrium property for the dummy player, it is enough to show that $V^{0}_t(\tau^*_t) = \essinf_{s_t \in\Strat_t} V^{0}_t(s_t)$ or, equivalently,
\begin{gather}\label{eqju52}
\sum_{i\in\Mset} V^{i}_t(\tau^*_t) = \sum_{i\in\Mset} V^{*i}_t = \esssup_{s_t \in\Strat_t} \sum_{i\in\Mset} V^{i}_t(s_t).
\end{gather}
We will establish \eqref{eqju52} using the backward induction. For $t=T$, property \eqref{eqju52} is trivially satisfied.
Let us now assume that it holds in the game $\ASG_{t+1}(X,G)$, so that
\begin{gather}\label{eqju53}
\EP \Big(\sum_{i\in\Mset} V^{*i}_{t+1} \,\Big|\, \Filt_t\Big) \geq
\EP \Big( \sum_{i\in\Mset} V^{i}_{t+1}(s_{t+1}) \,\Big|\,\Filt_t \Big)
 =\sum_{i\in\Mset} V^{i}_{t}(s_{t+1}),\quad \forall\,s_{t+1}\in\Strat_{t+1}.
\end{gather}
For any particular $s_t \in\Strat_t$, there are two cases to examine.

\noindent{\it Case 1}:$\, $ On the event $\{\widehat s_t = t\}$, we recall that the game on the time interval $[t,t+1]$ is equivalent to the single-period affine game $\AG(X_t, \EP(V^*_{t+1}\,|\,\Filt_t), G)$. In particular, its value is also given by $V_t(\tau^*_t)=V^*_t$. From the proof of Theorem \ref{thmjt01}, since the matrix $G$ satisfy the appropriate conditions, the optimal equilibrium property is preserved with the addition of the dummy player. Therefore, we must have
\begin{gather*}
\sum_{i\in\Mset} V^{i}_t(\tau^*_t) \geq \sum_{i\in\Mset} V^{i}_t(s_t ).
\end{gather*}
\noindent{\it Case 2}:$\, $ On the event $\{\widehat s_t \geq t+1\}$, a strategy profile $s_t$ can also be seen as a strategy in $\Strat_{t+1}$ and thus the induction hypothesis \eqref{eqju53} can be applied.
 Furthermore, since $\EP(V^*_{t+1}\,|\,\Filt_t)$ is the payoff vector of the single-period  affine game $\AG(X_t, \EP(V^*_{t+1}\,|\,\Filt_t), G)$ if no one exercises, using the same argument as before and the induction hypothesis \eqref{eqju53}, we obtain
\begin{gather*}
\sum_{i\in\Mset} V^{i}_t(\tau^*_t) \geq \sum_{i\in\Mset} \EP ( V^{*i}_{t+1} \,|\, \Filt_t) \geq \sum_{i\in\Mset} V^{i}_{t}(s_t).
\end{gather*}
In both cases, we have shown that the equality
\[
\sum_{i\in\Mset} V^{i}_t(\tau^*_t) = \sum_{i\in\Mset} V^{i}_{t}(s_t)
 \]
holds for all $s_t \in\Strat_t$. Hence \eqref{eqju52} is established and the induction is complete.
\end{proof}

\begin{remark}{\rm
From the proofs of Theorems \ref{thmmultiperiodvaluea} and \ref{thmju50}, it is clear that is possible to further generalize the game by making the matrix $G$ time-dependent and $\FF$-adapted,
so that we deal with a matrix-valued process $G_t,\, t=0,1, \dots , T-1$, where with probability one the random matrix
 $G_t (\omega )$ is  a $\Kmot$-matrix. In this version of the multi-period stochastic affine game, the payoff function would be
 given by the following expression
\begin{gather*}
V^k_{t}(s_t)=\EP\big( X^k_{\widehat s_t} \I_{\{k\in\Eset(s_t)\}}+\wh X^k_{\widehat s_t}\I_{\{k\notin\Eset(s_t)\}} \,\big|\, \Filt_t \big)
\end{gather*}
where
\begin{gather*}
\wh X^k_{\widehat s_t}=V^{*k}_{\widehat s_t+1}
+G^{k \Eset(s_t)}_{\widehat s_t} \big(G^{\Eset(s_t)\Eset(s_t)}_{\widehat s_t}\big)^{-1}
\big( X^{\Eset(s_t)}_{\widehat s_t} - V^{*\Eset(s_t)}_{\widehat s_t+1}\big),\quad \widehat s_t<T.
\end{gather*}
It is rather clear that if we adjust the definition of $U_t$  by setting $U_T=X_T$ and
\begin{gather*} %\label{eqju43}
\bm{U}_t=\SOL\big(X_t,\EP \big(\bm{U}_{t+1}\,\big|\,\Filt_{t}\big), G_t\big),\quad t =0,1,\dots ,T,
\end{gather*}
then the proofs (and thus also the conclusions) of Theorems \ref{thmmultiperiodvaluea} and \ref{thmju50} will still hold.}
\end{remark}

%%%%%%%%%%%%%%%%%%%%%%%%%%%%%%%%%%%%%%%%%%%%%%%%%%%%%%%%%%%%%%%%%%%%%%%%%%%%%%%%%%%%%%%%%%%%%%%%%%%%
\section{Reflected BSDEs for Stochastic Affine Games} \label{sec5}
%%%%%%%%%%%%%%%%%%%%%%%%%%%%%%%%%%%%%%%%%%%%%%%%%%%%%%%%%%%%%%%%%%%%%%%%%%%%%%%%%%%%%%%%%%%%%%%%%%%%

In this final section, we briefly examine the relationships between multi-period affine games and solutions to certain
multi-dimensional reflected BSDEs.

\subsection{A Class of Multi-Dimensional Reflected BSDEs}

Let us first recall the properties of variational inequalities, which we will need in what follows.
For an in-depth analysis of variational inequalities, we refer to Facchinei and Pang \cite{Facchinei}
and Harker and Pang \cite{Harker}.

\begin{definition} {\rm
Let $\D$ be a subset of $\R^m$ and $F\map{\D}{R^m}$ be a mapping. In the \emph{variational inequality problem} $\VI(\D,F)$,
the goal is to find a vector $z\in\D$ such that}
\begin{gather*}
(F(z))^{\mathsf T} (y-z) \geq 0, \quad \forall\, y\in\D.
\end{gather*}
\end{definition}

Typically, $\D$ is assumed to be closed and convex, while $F$ is assumed to be continuous.
% The solution to $\VI(\D,F)$ is also the solution to the equation
% \[
% z=\pi_{\D}\big(z-F(z)\big).
% \]
If $F$ is an affine map $F(z)=q+Mz$, then $\VI(\R^m_+,F)$ is equivalent to $\LCP(q,M)$.
For $\LCP(q,M)$, we have seen in Proposition \ref{proplc03} that if $M$ is a $\Pmat$-matrix, then the existence and uniqueness of solutions is guaranteed. Similar existence and uniqueness results are also known for $\VI(\D,F)$. In particular, Proposition \ref{thmlp05} can be found in Harker and Pang \cite{Harker}.

\begin{proposition}\label{thmlp05}
(i) Let $\D$ be a non-empty, closed, convex subset of $\R^m$ and let $F\map{\D}{\R^m}$ be a continuous mapping.
If $F$ is \emph{strongly monotone} on $\D$, that is, there exists $c>0$ such that
\[
(F(x)-F(y))^{\mathsf T}(x-y) \geq c \, \norm{x-y}^2, \quad \forall\, x,y \in \D,\ x\neq y,
\]
then $\VI(\D,F)$ has a unique solution. \hfill \break
(ii) Let $O$ be a rectangular region in $\R^m$ and let $F\map{O}{\R^m}$ be a continuous mapping.
If $F$ is a \emph{uniform $\Pmat$-function} on $O$, that is, there exists $c>0$ such that
\[
\max_{1\leq i\leq m} (F_i(x)-F_i(y))(x_i-y_i) \geq c \, \norm{x-y}^2, \quad \forall\, x,y \in O ,\ x\neq y,
\]
then $\VI(O,F)$ has a unique solution.
\end{proposition}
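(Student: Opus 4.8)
The plan is to follow the classical projection-plus-fixed-point route for variational inequalities: reformulate $\VI(\D,F)$ as a fixed-point equation for a projection map, solve it by Brouwer's theorem on a truncated (compact) domain, and then remove the truncation using a coercivity estimate supplied by the monotonicity hypothesis. Uniqueness is immediate from that hypothesis: if $z_1,z_2\in\D$ both solve $\VI(\D,F)$, then testing the inequality at $z_1$ with $y=z_2$ and the one at $z_2$ with $y=z_1$ and adding gives $(F(z_1)-F(z_2))^{\mathsf T}(z_1-z_2)\leq 0$, while strong monotonicity forces $(F(z_1)-F(z_2))^{\mathsf T}(z_1-z_2)\geq c\,\norm{z_1-z_2}^2$; hence $z_1=z_2$.

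For existence in part (i), I would use Lemma \ref{lemjq01} with the Euclidean inner product to note that $z\in\D$ solves $\VI(\D,F)$ if and only if $z=\proj{z-F(z)}{\D}$, where $\proj{\cdot}{\D}$ is the metric projection onto the closed convex set $\D$. When $\D$ is compact, $z\mapsto\proj{z-F(z)}{\D}$ is a continuous self-map of the compact convex set $\D$ (projection onto a closed convex set is continuous and $F$ is continuous), so Brouwer's fixed-point theorem yields a solution. For unbounded $\D$, fix $x_0\in\D$, pick $R$ with $x_0\in\D_R:=\D\cap\{x:\norm{x}\leq R\}$, and obtain a solution $z_R$ of $\VI(\D_R,F)$ from the compact case. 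Strong monotonicity gives the coercivity bound
\[
F(x)^{\mathsf T}(x-x_0)\ \geq\ c\,\norm{x-x_0}^2-\norm{F(x_0)}\,\norm{x-x_0}\ \longrightarrow\ +\infty\qquad\text{as }\norm{x}\to\infty,
\]
so for $R$ large every such $z_R$ satisfies $\norm{z_R}<R$ — otherwise testing $\VI(\D_R,F)$ at $z_R$ with $y=x_0$ would give $F(z_R)^{\mathsf T}(z_R-x_0)\leq 0$, contradicting the display. Finally, a localization step shows that an interior truncated solution solves the full problem: for any $y\in\D$ the point $z_R+t(y-z_R)$ lies in $\D_R$ for small $t>0$, and inserting it, dividing by $t$, and letting $t\downarrow 0$ gives $F(z_R)^{\mathsf T}(y-z_R)\geq 0$.

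For part (ii), the same programme applies with the box $O=\prod_{i=1}^m[l_i,u_i]$ (allowing $l_i=-\infty$ or $u_i=+\infty$), the metric projection onto $O$ now taken coordinatewise; as in Proposition \ref{propjp20}, Lemma \ref{lemjq01} then says that $z$ solves $\VI(O,F)$ if and only if $F_i(z)(y_i-z_i)\geq 0$ for every $y_i\in[l_i,u_i]$ and every $i$. Uniqueness: if $z_1\neq z_2$ both solve, choose an index $i$ attaining $\max_j(F_j(z_1)-F_j(z_2))(z_1^j-z_2^j)$, which by the uniform $\Pmat$-function property is $\geq c\,\norm{z_1-z_2}^2>0$; but the componentwise inequalities at $z_1$ and $z_2$ force $(F_i(z_1)-F_i(z_2))(z_1^i-z_2^i)\leq 0$, a contradiction. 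Existence follows by truncating $O$ to bounded subboxes, applying Brouwer to the coordinatewise-projection map, and using the coercivity that the uniform $\Pmat$-function property provides.

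The main obstacle is removing the truncation for unbounded domains: one must check that the coercivity estimate genuinely confines every truncated solution to one fixed ball (resp. subbox), so that a single interior solution already solves the original $\VI$. In part (ii) this is slightly more delicate, since the box may be unbounded in only some coordinate directions and the confinement must be read off componentwise rather than from one Euclidean estimate; alternatively, as these are standard facts, one may simply cite the corresponding theorems in Harker and Pang \cite{Harker}.
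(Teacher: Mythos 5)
The paper does not prove this proposition at all: it is imported verbatim from the literature, with the text pointing the reader to Harker and Pang \cite{Harker} (the monograph of Facchinei and Pang \cite{Facchinei} covers the same material). Your proposal therefore supplies strictly more than the paper does, and the route you take is the standard one behind those cited theorems: the fixed-point reformulation $z=\proj{z-F(z)}{\D}$ via the projection characterization (your use of Lemma \ref{lemjq01} is exactly right), Brouwer on a compact truncation, coercivity from strong monotonicity to confine the truncated solutions, and the convexity/localization step to pass from $\VI(\D_R,F)$ to $\VI(\D,F)$; the uniqueness arguments in both parts (adding the two inequalities, respectively the componentwise inequalities for the box, and contradicting strong monotonicity, respectively the uniform $\Pmat$-function bound) are complete and correct. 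The only place where your argument is genuinely thinner than a full proof is the existence half of part (ii): for a box unbounded in only some directions the single Euclidean coercivity estimate does not directly confine the truncated solutions, and the required componentwise bound (essentially Mor\'e's argument for P-functions on rectangles) is more delicate than in part (i). You flag this honestly and offer the citation as a fallback, which is precisely what the paper itself does for the whole proposition, so nothing in your write-up is wrong; if you wanted a fully self-contained proof, that confinement step is the one piece still to be written out.
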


Let us introduce a particular class of multi-dimensional reflected BSDEs with solutions in a rectangular region.
Assume that we are given $\FF$-adapted, $\R^m$-valued processes $L$ and $U$ satisfying $-\infty\leq L\leq U\leq \infty$ for $i=1,\ldots,m$. Then the random rectangular region bounded by $L$ and $U$ is given by, for $t=0,\ldots,T$,
\[
\Oo_t=\big\{(Y^1_t ,\ldots,Y^m_t )\in\R^m : L^i_t\leq Y^i_t \leq U^i_t,\  1\leq i \leq m\big\}
\]
where $Y^1_t , \dots , Y^m_t $ are $\Filt_t$-measurable random variables.

We consider the following $m$-dimensional reflected BSDE with the data $(\xi, F, N, L, U, G)$
\begin{gather}\label{eqld02}
Z_t = \xi + \sum_{u=t}^{T-1} f(u,Z_u,\phi_u) + J_T-J_t - \sum_{u=t}^{T-1} \phi_u \Delta N_{u+1}
\end{gather}
where
\\ (i) $N$ is an $\R^d$-valued, $(\P,\FF)$-martingale with the predictable representation property;
\\ (ii) $f$ is an $\FF$-adapted, $\R^m$-valued random map;
\\ (iii)  $G$ is an $\FF$-adapted, $\R^{m}$-valued random map;
\\ (iv) $L$ and $U$ are $\FF$-adapted $\R^m$-valued processes with $L^i\leq U^i$ for $i=1,\ldots,m$;
\\ (v) $\xi$ is an $\Filt_T$-measurable random variable in $\R^m$ with $L_T^i\leq \xi^i \leq U_T^i$ for $i=1,\ldots,m$.

\begin{definition}\label{defld01} {\rm A \emph{solution of the reflected BSDE} \eqref{eqld02}
 with data $(\xi, F, N, L, U, G)$ is a triplet $(Z,\phi,J)$ of processes such that: \hfill \break (a) $Z$ is an $\FF$-adapted, $\R^m$-valued process such that equation \eqref{eqld02} is satisfied for every $t =0,\dots, T$; in particular, $Z_T=\xi$,
\\ (b) the condition $Z_t \in \Oo_t$ holds for every $t=0, \dots ,T$;
\\ (c) $\phi$ is an $\FF$-adapted,  $\mathbb{R}^{m \times d}$-valued process;
\\ (d) $J$ is an $\FF$-predictable, $\mathbb{R}^{m}$-valued process satisfying
\begin{gather}\label{eqld03}
J_t=\sum_{u=0}^{t-1} G(u,Z_u,\phi_u,\Delta K_{u+1})
\end{gather}
where the $\FF$-predictable, $\R^m$-valued process $K$ satisfies, for $t=0,1,\ldots,T-1$,
\begin{gather}\label{eqld04}
(\Delta K_{t+1})^{\mathsf T} (Z'-Z_t) \geq 0 \quad \forall\,Z'\in\Oo_t ,
\end{gather}
where $\Delta K_{t+1} := K_{t+1}- K_{t}$.}
\end{definition}

It is worth noting that one can replace \eqref{eqld04} by
\[
\sum_{t=0}^{T-1} (\I_{\{Z_t>L_t\}})^{\mathsf T} \Delta K^+_{t+1}
=\sum_{t=0}^{T-1} (\I_{\{Z_t<U_t\}})^{\mathsf T} \Delta K^-_{t+1}=0
\]
where $\Delta K^{i+}_{t+1}=\Delta K^i_{t+1} \I_{\{\Delta K^i_{t+1}>0\}}$ and $\Delta K^{i-}_{t+1}=-\Delta K^i_{t+1} \I_{\{\Delta K^i_{t+1}<0\}}$ for each $i=1,\ldots,m$.

%\begin{remark} {\rm Typically, the role of $\Delta K^+$ (resp. $\Delta K^-$) is to push the solution process up towards $L$ (resp. down towards $U$). So $G$ is usually chosen so that $\Delta K$ and $\Delta J = G(u,Z_u,\phi_u,\Delta K_{u+1})$ do not point in the `opposite' directions. In other words, $k\cdot G(t,z,\phi_t,k) \geq 0$.}
%\end{remark}

Let us make some comments on the existence and uniqueness of a solution to BSDE \eqref{eqld02}.
Assume that the function $G(t,z,y,k)$ has an inverse $G^{-1}_{t,z,y}$ with respect to $k$. Then, using the standard backward induction arguments, we may reduce the reflected BSDE from Definition \ref{defld01} to the variational inequality problem $\VI(\Oo_t,F)$, where the mapping $F$ is defined by the following expression
\[
F(z):=G^{-1}_{t,z,\phi_t}(z-f(t,z,y)-p).
\]
It is worth noting that here $\phi_t$ has already been computed from the induction step. In view of part (i) in Proposition \ref{thmlp05}, it suffices to postulate that $F$ is strongly monotone to guarantee the existence and uniqueness of a solution. However, since the domain $\Oo_t$ is now a rectangular region, rather than an arbitrary random convex subset of $\R^m$, we may also use the weaker condition from part (ii) in Proposition \ref{thmlp05}, that is, to assume that $F$ is a uniform $\Pmat$-function.

\subsection{Game Value as a Solution to the Reflected BSDE}

The connection between two-player Dynkin games and doubly-reflected BSDEs is well known (see, for instance, the seminal paper by Cvitani\'c and Karatzas \cite{Cvitanic}). We will now apply the multi-dimensional reflected BSDE from the previous section to a multi-player stochastic affine game. In fact, the full strength of this approach will only become clear
when dealing with affine game options in a financial market model with frictions. In the present simple framework, the underlying martingale $N$ introduced in Definition \ref{defld01} plays no essential role and thus it will be enough to consider the reduced-form BSDE, as given by \eqref{eqlm10}. In the financial applications, the martingale $N$ is given in advance, since it represents the discounted prices of traded assets, and the $\mathbb{R}^{m \times d}$-valued process $\phi $ is interpreted as a collection of hedging strategies
for $m$ parties.

As shown in Section \ref{secLCP}, the single-period affine game can be solved by focussing on a particular linear complementarity problem. Since linear complementarity problems are special cases of variational inequalities, it is not surprising that the value process of the stochastic affine game can be computed by solving a reflected BSDE stemming from the variational inequality.
Let us thus consider the $m$-player  multi-period stochastic affine game $\ASG (X,G)$  introduced in Definition \ref{defmultiperiodgame}. In particular, we restrict ourselves to the case of a (non-singular) $\Kmat$-matrix $G$, since linear complementarity problems with singular matrices may fail to possess a solution. Suppose that $X$ is an  $\FF$-adapted, $\R^m$-valued process and $G\in\R^{m\times m}$ is a $\Kmat$-matrix.
We now consider the reduced-form reflected $\BSDE (X,G)$, which is formally obtained by setting $f=0$ and $\xi = X_T$ in \eqref{eqld02} and by taking the conditional expectation with respect to $\Filt_t$
\begin{gather}\label{eqlm10}
Z_T=X_T , \quad Z_t - J_{t+1}+J_t = \EP(Z_{t+1}\, | \, \Filt_t).
\end{gather}
We now specialize Definition \ref{defld01} to the present situation where the mapping $J$ is given
by \eqref{eqlm11} and the random orthant $\Oo_t$ is defined by,  for $t=0,\ldots,T$,
 \[
\Oo_t=\big\{(Y^1_t,\ldots,Y^m_t ) \in \R^m : Y^i_t \geq X^i_t,\  1\leq i \leq m\big\}
\]
where $Y^1_t, \dots , Y^m_t$ are $\Filt_t$-measurable random variables.

\begin{definition}\label{deflm30}  {\rm
A \emph{solution to the reflected BSDE} \eqref{eqlm10} is a pair $(Z,J)$ which satisfies: \hfill \break
(i) $Z$ is an $\FF$-adapted, $\mathbb{R}^{m}$-valued process satisfying $Z_t\in \Oo_t$, \hfill \break
(ii) $J$ is an $\FF$-predictable, $\mathbb{R}^{m}$-valued process satisfying
\begin{gather} \label{eqlm11}
J_t=\sum_{u=0}^{t-1} G \Delta K_{u+1}
\end{gather}
where $K$ is an $\FF$-predictable, $\R^m$-valued, non-decreasing process satisfying}
\begin{gather*}
\sum_{t=0}^{T-1} ( \I_{\{Z_t>X_t\}})^{\mathsf T} \Delta K_{t+1}=0.
\end{gather*}
\end{definition}

The next lemma shows that the reflected $\BSDE (X,G)$ has a unique solution.

\begin{lemma} \label{lemmacc}
The reflected $\BSDE (X,G)$ given by formula \eqref{eqlm10} has a unique solution pair $(Z,J)$.
Moreover, the solution $Z$ may be written as $Z_T=X_T$ and, for all $t=0,1, \dots ,T-1$,
\begin{gather}\label{eqci23}
 Z_t=\SOL \big(X_t, \EP(Z_{t+1}\,|\, \Filt_t), G \big).
\end{gather}
\end{lemma}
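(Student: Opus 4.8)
The plan is to prove existence and uniqueness of the solution to the reduced-form reflected $\BSDE(X,G)$ by backward induction on $t$, and then to identify the solution $Z$ with the value process of the multi-period game, so that formula \eqref{eqci23} follows directly from Theorem \ref{thmmultiperiodvaluea}. First I would set up the induction: at $t=T$ the terminal condition forces $Z_T=X_T$, and since $X_T\in\Oo_T$ trivially, there is nothing to prove. Assuming $Z_{t+1}$ (and hence $\EP(Z_{t+1}\,|\,\Filt_t)$) has been determined, I would rewrite the one-step equation $Z_t - \Delta J_{t+1} = \EP(Z_{t+1}\,|\,\Filt_t)$ together with the constraint $Z_t\in\Oo_t$, the structural relation $\Delta J_{t+1}=G\,\Delta K_{t+1}$, and the minimality condition $(\I_{\{Z_t>X_t\}})^{\mathsf T}\Delta K_{t+1}=0$. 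Writing $z:=Z_t$, $p:=\EP(Z_{t+1}\,|\,\Filt_t)$, $a:=\Delta K_{t+1}$, this is exactly the system \eqref{eqjj01} augmented with $z\geq X_t$ and $a\geq 0$ and complementarity between $z-X_t$ and $a$ — that is, it is (up to the translation $w=z-X_t$, $q=p-X_t$, $M=G$) precisely the $\LCP(p-X_t,G)$ studied in Section \ref{secLCP}.

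Given that reduction, the key step is to invoke Theorem \ref{thmjj30}(i): since $G$ is a $\Kmat$-matrix, it is in particular a $\Pmat$-matrix, so by Proposition \ref{proplc03} the problem $\LCP(p-X_t,G)$ has a unique solution $(a,w)$, and $Z_t:=w+X_t=\SOL(X_t,p,G)=\SOL\big(X_t,\EP(Z_{t+1}\,|\,\Filt_t),G\big)$ is the unique vector satisfying all the one-step constraints. This simultaneously gives existence (take this $Z_t$ at every step, and define $K$ accordingly, which is non-decreasing since each $\Delta K_{t+1}=a\geq0$) and uniqueness (at each step the constrained equation pins down $Z_t$ uniquely, hence by downward induction the whole pair $(Z,J)$ is unique, noting that $J$ is recovered from $Z$ via $\Delta J_{t+1}=\EP(Z_{t+1}\,|\,\Filt_t)-Z_t$ and $K$ via $\Delta K_{t+1}=G^{-1}\Delta J_{t+1}$). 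The measurability of $Z_t$ is inherited from that of $X_t$ and $\EP(Z_{t+1}\,|\,\Filt_t)$ together with the fact that the map $p\mapsto\SOL(X_t,p,G)$ is the (single-valued, hence measurable) solution map of a $\Pmat$-matrix LCP.

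Finally, comparing the recursion \eqref{eqci23} for $Z_t$ with the recursion \eqref{eqju22} defining $\bm U_t$ in Theorem \ref{thmmultiperiodvaluea}, we see $Z_t=U_t$ for all $t$ by induction (both start at $X_T$ and satisfy the same one-step map $\SOL(X_t,\EP(\cdot\,|\,\Filt_t),G)$), so $Z$ coincides with the value process $V^*_t$ of $\ASG_t(X,G)$; in particular the constraint $Z_t\in\Oo_t$ and the complementarity condition are automatically consistent with the optimal-equilibrium description there. The main obstacle I anticipate is not the algebra but the careful bookkeeping of the measurable selection: one must check that at each backward step the LCP-solution vector $Z_t$ is genuinely $\Filt_t$-measurable, and that the predictability of $J$ and $K$ is respected — these follow because $X_t$ is $\Filt_t$-measurable and $\EP(Z_{t+1}\,|\,\Filt_t)$ is $\Filt_t$-measurable, and the $\Pmat$-matrix LCP has a unique (hence Borel-measurable in the data) solution, but this is the point where a little care is needed rather than a one-line appeal to Theorem \ref{thmjj30}.
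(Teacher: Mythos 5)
Your proposal is correct and follows essentially the same route as the paper: backward induction reducing each step over $[t,t+1]$ to the deterministic problem $z-Gk=p$, $z\geq X_t$, $k\geq 0$ with complementarity, recognized as $\LCP(P-X,G)$-type and solved uniquely via Proposition \ref{proplc03} since a $\Kmat$-matrix is a $\Pmat$-matrix, which yields \eqref{eqci23} directly. The extra remarks on measurability and on identifying $Z$ with $U_t$ (hence $V^*$) are fine but go beyond the lemma itself — the latter is the content of the subsequent corollary.
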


\begin{proof}
We apply standard backward induction arguments and we note that,
for each $t$, we deal with an $\Filt_t$-measurable problem, which can be solved for each $\omega $.
It is thus clear that solving the $\BSDE(X,G)$ over $[t,t+1]$ reduces to the following deterministic problem
\begin{gather*}
z-Gk=p,\quad z\geq x,\quad k\geq0, \quad \I_{\{z>x\}} k=0,
\end{gather*}
where $z=Z_t,\, k=\Delta K_{t+1},\, x=X_t$. We observe that this problem is equivalent to $\LCP(p-x,G)$ (or $\VI(\Oo_t,F)$ where $F(z)=G^{-1}(z-p)$). Since $G$ is a $\Kmat$-matrix (and thus also $\Pmat$-matrix), by Proposition \ref{proplc03}, there exists a unique solution $z$.
\end{proof}

For the last result, it suffices to combine Lemma \ref{lemmacc} with Theorem \ref{thmmultiperiodvaluea}.

\begin{corollary}
Let $V^*$ be the value process of $\ASG(X,G)$ with a $\Kmat$-matrix $G$.
Then $V^*= Z $ where the pair $(Z,J)$ is the unique solution to the reflected $\BSDE (X,G)$.
\end{corollary}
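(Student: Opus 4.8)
The plan is to observe that both the value process $V^*$ and the first component $Z$ of the solution to the reflected $\BSDE(X,G)$ are characterised by one and the same backward recursion, so that the desired equality follows from a uniqueness argument. First I would record the (trivial but necessary) inclusion $\Kmat \subseteq \Kmot$: a $\Pmat$-matrix has \emph{all} principal minors strictly positive, in particular a positive determinant, so it satisfies the defining requirements of a $\Pmot$-matrix, while the $\Zmat$ property is common to both notions. Hence Theorem \ref{thmmultiperiodvaluea} applies verbatim to $\ASG(X,G)$ with the $\Kmat$-matrix $G$, and it tells us that the value process is $V^*_t = U_t$, where $U_T = X_T$ and $U_t = \SOL\big(X_t, \EP(U_{t+1}\,|\,\Filt_t), G\big)$ for $t = 0,1,\dots,T-1$.

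Next I would invoke Lemma \ref{lemmacc}, which guarantees that the reflected $\BSDE(X,G)$ of \eqref{eqlm10} has a unique solution pair $(Z,J)$, and that its first component obeys $Z_T = X_T$ together with $Z_t = \SOL\big(X_t, \EP(Z_{t+1}\,|\,\Filt_t), G\big)$ for $t = 0,1,\dots,T-1$. Thus $(U_t)_{t=0}^{T}$ and $(Z_t)_{t=0}^{T}$ satisfy identical terminal conditions and the identical one-step recursion driven by the map $\SOL(\cdot,\cdot,G)$. A straightforward backward induction — starting from $U_T = X_T = Z_T$ and using at each step that $U_{t}$ (resp.\ $Z_t$) is determined by $U_{t+1}$ (resp.\ $Z_{t+1}$) only through $\EP(\cdot\,|\,\Filt_t)$ — yields $U_t = Z_t$ for every $t$, and therefore $V^* = U = Z$, which is the assertion.

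There is essentially no hard step in this argument; the only point that merits a moment's attention is confirming that the hypotheses of Theorem \ref{thmmultiperiodvaluea}, stated for $\Kmot$-matrices, are indeed met by the $\Kmat$-matrix $G$ of Lemma \ref{lemmacc}, and that the two recursions are literally the same object rather than merely similar. Once these identifications are in place, the corollary is an immediate consequence of the two uniqueness statements already established.
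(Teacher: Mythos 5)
Your argument is correct and is exactly the paper's own route: the paper proves the corollary by combining Lemma \ref{lemmacc} with Theorem \ref{thmmultiperiodvaluea}, i.e.\ by noting that $V^*$ and $Z$ satisfy the same terminal condition and the same backward recursion via $\SOL(X_t,\EP(\cdot\,|\,\Filt_t),G)$. Your additional remark that every $\Kmat$-matrix is a $\Kmot$-matrix (so the theorem applies) is a harmless and correct piece of bookkeeping that the paper leaves implicit.
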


\newpage
% \vskip 5 pt

%%%%%%%%%%%%%%%%%%%%%%%%%%%%%%%%%%%%%%%%%%%%%%%%%%%%%%%%%%%%%%%%%%%%%%%%%%%%%

\noindent {\bf Acknowledgement.}
The research of Ivan Guo and Marek Rutkowski was supported under Australian Research Council's
Discovery Projects funding scheme (DP120100895).

%%%%%%%%%%%%%%%%%%%%%%%%%%%%%%%%%%%%%%%%%%%%%%%%%%%%%%%%%%%%%%%%%%%%%%%%%%%%%

\vskip 5 pt
%%%%%%%%%%%%%%%%%%%%%%%%%%%%%%%%%%%%%%%%%%%%%%%%%%%%%%%%%%%%%%%%%%%%%%%%%%%%%%%%%%%%%%
\section{Appendix: Redistribution Games as Affine Games} \label{secredd}
%%%%%%%%%%%%%%%%%%%%%%%%%%%%%%%%%%%%%%%%%%%%%%%%%%%%%%%%%%%%%%%%%%%%%%%%%%%%%%%%%%%%%%

 The goal of the appendix is to re-examine a \emph{general redistribution game} $\GRG(X,P,\alpha )$, where $\alpha = (\alpha_1, \dots , \alpha_m)$ with $\alpha_i> 0$ and $\sum_{i=1}^m \alpha_i < 1$, which was introduced in Section 3 of \cite{Guo2}.
Our goal is to show that such a game is a special case of an affine game given by Definition
\ref{defjj08} with a non-singular matrix $G$.

 Let the matrix $\whD$ be given by the following expression
\begin{gather}
\whD := \begin{pmatrix} \label{ceqjp02}
\alpha_1-\alpha_1^2 & -\alpha_1\alpha_2 & \cdots & -\alpha_1\alpha_m\\
-\alpha_2\alpha_1 & \alpha_2-\alpha_2^2 & \cdots & -\alpha_2\alpha_m\\
\vdots & \vdots & \ddots & \vdots\\
-\alpha_m\alpha_1 & -\alpha_m\alpha_2 & \cdots & \alpha_m-\alpha_m^2
\end{pmatrix}.
\end{gather}

\newpage

\begin{lemma} \label{appgrg}
The general redistribution game $\GRG(X,P,\alpha )$ is identical to the affine game $\AG(X,P,\whD)$.
\end{lemma}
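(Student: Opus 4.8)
The plan is to check that the two games have the same payoff function, i.e.\ that for every strategy profile $s\in\Strat$ the payoff vector of $\AG(X,P,\whD)$ produced by formula \eqref{eqlm16} coincides with the payoff vector prescribed for $\GRG(X,P,\alpha)$ in \cite{Guo2}. Fix $s$, abbreviate $\Eset=\Eset(s)$ and $\beta:=\sum_{i\in\Eset}\alpha_i$; since $\alpha_i>0$ and $\sum_{i=1}^m\alpha_i<1$ we have $0\le\beta<1$, and in fact $\beta<1$ even when $\Eset=\Mset$, which is exactly why $\whD$ falls under Definition \ref{defjj08} (the nonsingular case) rather than Definition \ref{defjr01}. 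Writing $\alpha=(\alpha_1,\dots,\alpha_m)^{\mathsf T}$, formula \eqref{ceqjp02} says precisely that $\whD=\operatorname{diag}(\alpha)-\alpha\alpha^{\mathsf T}$, so $\whD$ is symmetric and $\whD_{\Eset\Eset}=\operatorname{diag}(\alpha_\Eset)-\alpha_\Eset\alpha_\Eset^{\mathsf T}$. The single computation that drives everything is that the all-ones vector $\mathbf{1}_\Eset$ is mapped to a multiple of $\alpha_\Eset$:
\[
\whD_{\Eset\Eset}\mathbf{1}_{\Eset}=\operatorname{diag}(\alpha_\Eset)\mathbf{1}_{\Eset}-\alpha_\Eset(\alpha_\Eset^{\mathsf T}\mathbf{1}_{\Eset})=(1-\beta)\alpha_\Eset,\qquad \det\whD_{\Eset\Eset}=\Big(\prod_{i\in\Eset}\alpha_i\Big)(1-\beta)>0,
\]
the determinant being read off from the matrix determinant lemma. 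Hence $\whD$ has positive diagonal and all principal minors positive, so $\AG(X,P,\whD)$ is a well-defined affine game, and $(\whD_{\Eset\Eset})^{-1}\alpha_\Eset=(1-\beta)^{-1}\mathbf{1}_\Eset$, so also $\alpha_\Eset^{\mathsf T}(\whD_{\Eset\Eset})^{-1}=(1-\beta)^{-1}\mathbf{1}_\Eset^{\mathsf T}$ by symmetry.

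I would then argue that it is simplest not to invert anything explicitly but to verify directly that the $\GRG$ payoff solves the defining system \eqref{eqjj01} with $G=\whD$; by the uniqueness implicit in Lemma \ref{xlem1}, this identifies it with $V(s)$. Concretely, recall that in $\GRG(X,P,\alpha)$ the exercising players receive their exercise payoff while the aggregate relative gain $\sigma:=\sum_{i\in\Eset}(X_i-P_i)$ is redistributed among the non-exercising players in proportion to their weights, so that
\[
V_k(s)=X_k\ \ (k\in\Eset),\qquad V_k(s)=P_k-\frac{\alpha_k}{1-\beta}\,\sigma\ \ (k\notin\Eset).
\]
Setting $a_i:=(X_i-P_i)/\alpha_i+\sigma/(1-\beta)$ for $i\in\Eset$ and $a_k:=0$ for $k\notin\Eset$, a short check using $\sum_{i\in\Eset}\alpha_i a_i=\sigma+\beta\sigma/(1-\beta)=\sigma/(1-\beta)$ shows that $(\whD a)_k=X_k-P_k$ for $k\in\Eset$ and $(\whD a)_k=-\alpha_k\sum_{i\in\Eset}\alpha_i a_i=-\alpha_k\sigma/(1-\beta)$ for $k\notin\Eset$; that is, $V(s)-P=\whD a$ with $a$ supported on $\Eset$ and $V_i(s)=X_i$ for $i\in\Eset$, which are exactly the conditions \eqref{eqjj01}. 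Equivalently, one may substitute $(\whD_{\Eset\Eset})^{-1}\alpha_\Eset=(1-\beta)^{-1}\mathbf{1}_\Eset$ into the closed form \eqref{eqlm16} via Sherman--Morrison and arrive at the same two expressions. Since $s$ was arbitrary, the payoff functions of $\GRG(X,P,\alpha)$ and $\AG(X,P,\whD)$ agree, which is the claim.

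The argument is entirely elementary and I do not expect a genuine obstacle. The only points that need a little care are (i) making sure the normalisation of the redistribution weights in the definition of $\GRG(X,P,\alpha)$ in \cite{Guo2} indeed produces the factor $(1-\beta)^{-1}$ obtained above --- this is where the standing hypothesis $\sum_{i=1}^m\alpha_i<1$ enters, since it keeps every $\whD_{\Eset\Eset}$ (including $\whD_{\Mset\Mset}=\whD$ itself) invertible --- and (ii) keeping the cases $k\in\Eset$ and $k\notin\Eset$ separate throughout, as the affine-game payoff \eqref{eqlm16} treats exercising and non-exercising players asymmetrically.
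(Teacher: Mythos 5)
Your argument is correct, but it takes a different route from the paper. The paper proceeds by brute force through the closed formula \eqref{eqlm16}: it writes down the explicit inverse $D$ of $\whD$ (formula \eqref{eqjp03}), observes that $\big(\whD_{\Eset\Eset}\big)^{-1}$ has the analogous form with $1-\sum_{i\in\Mset}\alpha_i$ replaced by $1-\sum_{i\in\Eset}\alpha_i$, and then expands $P_k+\whD_{k\Eset}\big(\whD_{\Eset\Eset}\big)^{-1}(X_\Eset-P_\Eset)$ term by term until it collapses to $P_k-\frac{\alpha_k}{1-\beta}\sum_{i\in\Eset}(X_i-P_i)$, which is matched with the weights $w_k(\Eset)$ of the redistribution game. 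You avoid any explicit inversion: exploiting the rank-one structure $\whD=\operatorname{diag}(\alpha)-\alpha\alpha^{\mathsf T}$ you get the identity $\whD_{\Eset\Eset}\mathbf{1}_\Eset=(1-\beta)\alpha_\Eset$ and the determinant $\big(\prod_{i\in\Eset}\alpha_i\big)(1-\beta)$, then exhibit the multiplier vector $a$ supported on $\Eset$ for which the $\GRG$ payoff satisfies the defining system \eqref{eqjj01}, and conclude by the uniqueness built into Lemma \ref{xlem1}. Both computations check out (your $a_i=(X_i-P_i)/\alpha_i+\sigma/(1-\beta)$ does give $\whD a=V(s)-P$), so the identification is complete. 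What each approach buys: your verification route is shorter, needs no "one can check" step for the inverse, and delivers as a byproduct that every principal minor of $\whD$ is positive, i.e.\ the $\Pmat$-matrix property that the paper only establishes in the subsequent proposition; the paper's route, on the other hand, produces the explicit inverse $D$, which it then reuses to write the value as a projection under the inner product \eqref{innpro}, something your argument does not provide directly.
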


\begin{proof}
One can check that the inverse $D$ of $\whD$ is given by
\begin{gather}\label{eqjp03}
D = \frac{1}{1-\sum_{i=1}^m \alpha_i} \begin{pmatrix}
1+\frac{1-\sum_{i=1}^m \alpha_i}{\alpha_1} & 1 & \cdots & 1\\
1 & 1+\frac{1-\sum_{i=1}^m \alpha_i}{\alpha_2} & \cdots & 1\\
\vdots & \vdots & \ddots & \vdots\\
1 & 1 & \cdots & 1+\frac{1-\sum_{i=1}^m \alpha_i}{\alpha_m}
\end{pmatrix}.
\end{gather}
Recall the payoff in $\AG(X,P,\whD)$ is given in Definition \ref{defjj08} as
\[
V(s)= P+\whD_{\cdot \Eset(s)} \big(\whD_{\Eset(s)\Eset(s)}\big)^{-1}(X_{\Eset(s)}-P_{\Eset(s)}).
\]
The inverse $\big(\whD_{\Eset(s)\Eset(s)}\big)^{-1}$ is similar to $D$, except it only contains $\alpha_i$ where $i\in\Eset(s)$ and $1-\sum_{i=1}^m \alpha_i$ is replaced by $1-\sum_{i\in\Eset(s)} \alpha_i$. It is clear that for $i\in\Eset(s)$, we have $V_i(s)=X_i$.

For $k\notin\Eset(s)$, we obtain
\begin{align*}
V_k(s)&=P_k+ \whD_{k\Eset(s)} \big(\whD_{\Eset(s)\Eset(s)}\big)^{-1}(X_{\Eset(s)}-P_{\Eset(s)})\\
&=P_k+\frac{\sum_{i\in\Eset(s)}(-\alpha_k\alpha_i) \Big(\frac{1-\sum_{j\in\Eset(s)} \alpha_j}{\alpha_i}(X_i-P_i) + \sum_{j\in\Eset(s)} (X_j-P_j)\Big)}{1-\sum_{i\in\Eset(s)} \alpha_i}\\
&=P_k-\frac{\alpha_k}{1-\sum_{i\in\Eset(s)} \alpha_i}  \Big(\Big(1-\sum_{j\in\Eset(s)} \alpha_j\Big)\sum_{i\in\Eset(s)}(X_i-P_i) + \sum_{i\in\Eset(s)}\alpha_i\sum_{j\in\Eset(s)} (X_j-P_j)\Big)\\
&=P_k-\frac{\alpha_k}{1-\sum_{i\in\Eset(s)} \alpha_i} \sum_{i\in\Eset(s)}(X_i-P_i).
\end{align*}
Recall from \cite{Guo2} (see Assumption (A.1) therein) that the {\it weights} $w_k(\Eset(s))$ are given by the equality $w_k(\Eset(s))=\frac{\alpha_k}{1-\sum_{i\in\Eset(s)} \alpha_i}$. Therefore, the payoff $V_k(s)$ can be represented as follows
\begin{align*}
V_k(s)=
\begin{cases}
 X_{k}, \ \  & k\in\Eset(s),\\
 P_k - w_k(\Eset(s)) \sum_{i\in\Eset(s)}(X_i-P_i), \ \ & k\in \Mset\setminus\Eset(s),
\end{cases}
\end{align*}
so that it matches the payoff function of $\GRG(X,P,\alpha)$ (see Definition 5 in \cite{Guo2}).
\end{proof}

We already know from Theorem 3.15 in \cite{Guo2} that a general redistribution game $\GRG(X,P,\alpha)$ is a WUC game with a unique value. To reaffirm these properties using Theorem \ref{thmjj30}(iii), it suffices to show that the matrix $\whD$ of a
 general redistribution game is $\Kmat$-matrix.

\begin{proposition}
The matrix $\whD$ given by \eqref{ceqjp02} is a $\Kmat$-matrix. The value $V^*$ of $\GRG(X,P,\alpha)=\AG(X,P,\whD)$
satisfies $V^*=\pi^{D}_{\Oo (X)}(P)$ where the projection $\pi^{D}_{\Oo (X)} : \R^m \to \Oo (X) $ is taken
under the inner product $\langle x, y\rangle^{D} = x^{\mathsf T} D y$ given by
\begin{align} \label{innpro}
x^{\mathsf T} D y=\sum_{i=1}^m \left( \frac{x_iy_i}{\alpha_i}\right) + \frac{\left(\sum_{i=1}^m x_i \right)\left(\sum_{i=1}^m y_i\right)}{1-\sum_{i=1}^m \alpha_i}.
\end{align}
\end{proposition}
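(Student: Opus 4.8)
The plan is to reduce everything to Theorem~\ref{thmjj30} by checking that $\whD$ is a positive definite symmetric $\Kmat$-matrix. So the work splits into a matrix-theoretic part (showing $\whD\in\Kmat$ and that it is symmetric and positive definite) and a bookkeeping part (reading off the value and the projection formula from Theorem~\ref{thmjj30}, and then computing the inner product $x^{\mathsf T}Dy$ explicitly).

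For the matrix-theoretic part, I would first observe that $\whD=\operatorname{diag}(\alpha_1,\dots,\alpha_m)-\alpha\alpha^{\mathsf T}$ with $\alpha=(\alpha_1,\dots,\alpha_m)^{\mathsf T}$, so $\whD$ is symmetric and its off-diagonal entries $-\alpha_i\alpha_j$ are strictly negative; hence $\whD$ is a $\Zmat$-matrix (and its diagonal entries $\alpha_i(1-\alpha_i)$ are positive). For positive definiteness, for any $x\neq 0$ I would compute $x^{\mathsf T}\whD x=\sum_{i=1}^m\alpha_i x_i^2-\big(\sum_{i=1}^m\alpha_i x_i\big)^2$ and bound the subtracted term by Cauchy--Schwarz, $\big(\sum_i\alpha_i x_i\big)^2\le\big(\sum_i\alpha_i\big)\big(\sum_i\alpha_i x_i^2\big)$, to obtain $x^{\mathsf T}\whD x\ge\big(1-\sum_i\alpha_i\big)\sum_i\alpha_i x_i^2>0$, the strict inequality using $\sum_i\alpha_i<1$ and $\alpha_i>0$ for all $i$. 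A symmetric positive definite matrix is a $\Pmat$-matrix, since every principal submatrix is again symmetric positive definite and hence has positive determinant; here one can also simply note that $\whD_{\Eset\Eset}$ has exactly the same shape with the subfamily $\{\alpha_i\}_{i\in\Eset}$, which still sums to less than $1$. Thus $\whD$ is both a $\Pmat$- and a $\Zmat$-matrix, i.e.\ a $\Kmat$-matrix.

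For the second assertion I would invoke Lemma~\ref{appgrg} to identify $\GRG(X,P,\alpha)$ with $\AG(X,P,\whD)$, then apply Theorem~\ref{thmjj30}(iii) (valid since $\whD\in\Kmat$) to get that the game is WUC with a unique value $V^*$ equal to the common Nash equilibrium payoff, and Theorem~\ref{thmjj30}(ii) (valid since $\whD$ is positive definite symmetric) to get $V^*=\pi^{\whD^{-1}}_{\Oo(X)}(P)$ for the inner product $\langle x,y\rangle^{\whD^{-1}}=x^{\mathsf T}\whD^{-1}y$. Since $\whD^{-1}=D$ is the matrix \eqref{eqjp03}, as already verified in the proof of Lemma~\ref{appgrg}, the proof finishes with a one-line computation: writing $S=\sum_{i=1}^m\alpha_i$ and letting $J$ be the $m\times m$ matrix of ones, $D=\operatorname{diag}(1/\alpha_1,\dots,1/\alpha_m)+\tfrac1{1-S}J$, so $x^{\mathsf T}Dy=\sum_{i=1}^m\tfrac{x_iy_i}{\alpha_i}+\tfrac{(\sum_i x_i)(\sum_i y_i)}{1-S}$, which is exactly \eqref{innpro}.

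I do not anticipate any genuine obstacle; the single delicate point is obtaining the strict inequality $x^{\mathsf T}\whD x>0$ (rather than merely $\ge 0$), and this is precisely where the standing assumption $\sum_{i=1}^m\alpha_i<1$ of the general redistribution game is used. Everything else is a direct appeal to Theorem~\ref{thmjj30} together with a routine computation with \eqref{eqjp03}.
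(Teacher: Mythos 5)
Your proposal is correct; the only place where you diverge from the paper's own argument is in how the $\Pmat$/positive-definiteness step for $\whD$ is established. The paper identifies $\whD$ as a $\Zmat$-matrix exactly as you do, but then computes $\det \whD$ explicitly: writing $\det\whD=\det(I-B)\prod_{i=1}^m\alpha_i$ with $B$ the rank-one matrix whose columns all equal $(\alpha_1,\dots,\alpha_m)^{\mathsf T}$, it uses the characteristic polynomial $\lambda^{m-1}\bigl(\lambda-\sum_i\alpha_i\bigr)$ to get $\det\whD=\bigl(1-\sum_i\alpha_i\bigr)\prod_i\alpha_i>0$, notes that every principal submatrix has the same structure (with a subfamily of the $\alpha_i$ still summing to less than $1$), concludes that $\whD$ is a $\Pmat$-matrix, and only then infers positive definiteness from symmetry. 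You argue in the opposite direction: you prove positive definiteness directly from the quadratic form $x^{\mathsf T}\whD x=\sum_i\alpha_i x_i^2-\bigl(\sum_i\alpha_i x_i\bigr)^2$ via Cauchy--Schwarz, and deduce the $\Pmat$-property from the fact that principal submatrices of a symmetric positive definite matrix are again positive definite. Both routes are sound; yours avoids any determinant computation and makes transparent exactly where $\sum_i\alpha_i<1$ enters, while the paper's computation has the small bonus of yielding the explicit value $\det\whD=\bigl(1-\sum_i\alpha_i\bigr)\prod_i\alpha_i$. The remainder of your argument (Lemma \ref{appgrg}, Theorem \ref{thmjj30}(ii)--(iii), and the identification $D=\operatorname{diag}(1/\alpha_1,\dots,1/\alpha_m)+\tfrac{1}{1-S}J$ giving \eqref{innpro}) matches the paper's reasoning, including the reliance on the inverse formula \eqref{eqjp03} already recorded in Lemma \ref{appgrg}.
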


\begin{proof}
It is clear that $\whD$ is a $\Zmat$-matrix because $\alpha_i(1-\alpha_i)>0$ and $-\alpha_i\alpha_j<0$.
To show that $\whD$ is also a $\Pmat$-matrix, it suffices to check that $\det \whD >0$, since its principal sub-matrices have the same structure. Note that we can write $\det \whD = \det (I-B)\prod_{i=1}^m \alpha_i$ where $B$ is given by
\[
B := \begin{pmatrix}
\alpha_1 & \alpha_1 & \cdots & \alpha_1\\
\alpha_2 & \alpha_2 & \cdots & \alpha_2\\
\vdots & \vdots & \ddots & \vdots\\
\alpha_m & \alpha_m & \cdots & \alpha_m
\end{pmatrix},
\]
 so that $B$ is a rank one matrix with a non-zero eigenvalue given by $\textrm{tr}(B)= \sum_{i=1}^m \alpha_i$. Hence the characteristic polynomial of $B$ is
\[
\det (\lambda I-B)=\lambda^{m-1}\Big(\lambda - \sum_{i=1}^m \alpha_i \Big).
\]
By setting $\lambda=1$, we obtain
\[
\det \whD = \det (I-B)\prod_{i=1}^m \alpha_i= \Big(1-\sum_{i=1}^m \alpha_i\Big)\prod_{i=1}^m \alpha_i > 0.
\]
Therefore, $\whD$ is indeed a $\Kmat$-matrix. Since $\whD$ is symmetric, it is also positive definite.
From Theorem \ref{thmjj30}(ii), we deduce that the value of $\GRG(X,P,\alpha)=\AG(X,P,\whD)$ can also be written as projection under the inner product $\langle x, y\rangle^{D} = x^{\mathsf T} D y$ where $D =  \wh{D}^{-1}$. It is easy to check that
the inner product $\langle x, y\rangle^{D}$ is given by \eqref{innpro}.
As expected, this expression coincides with the inner product introduced in \cite{Guo2} (see formula (3.11) therein)
thus confirming also the projection results obtained in \cite{Guo2}.
\end{proof}

\end{document}